\newtheorem{theorem}{Theorem}[section]
\newtheorem{lemma}[theorem]{Lemma}
\newtheorem{proposition}[theorem]{Proposition}
\newtheorem{corollary}[theorem]{Corollary}
\theoremstyle{definition}
\newtheorem{definition}[theorem]{Definition}
\newtheorem{remark}[theorem]{Remark}
\newtheorem{example}[theorem]{Example}
\newtheorem{ipotesi}[theorem]{Assumption}
\numberwithin{equation}{section}
\DeclareMathAlphabet{\mathpzc}{OT1}{pzc}{m}{it}
\newcommand{\N}{\mathbb{N}} 
\newcommand{\R}{\mathbb{R}} 
\newcommand{\bC}{\mathbf{C}} 
\newcommand{\p}{\mathbf{p}} 
\newcommand{\A}{\mathcal{A}} 
\newcommand{\G}{\mathcal{G}} 
\newcommand{\bfeta}{\boldsymbol{\eta}} 
\newcommand{\Dir}{\mathrm{Dir}} 
\newcommand{\gr}{\mathrm{Gr}}
\newcommand{\zetab}{{\bm{\zeta}}}
\newcommand{\varrhob}{{\bm{\varrho}}}
\newcommand{\xib}{{\bm{\xi}}}
\newcommand{\rhob}{{\bm{\rho}}}
\newcommand{\mass}{\mathbf{M}} 
\newcommand\res{\mathop{\hbox{\vrule height 7pt width .3pt depth 0pt\vrule height .3pt width 5pt depth 0pt}}\nolimits}
\newcommand{\reg}{\mathrm{Reg}} 
\newcommand{\sing}{\mathrm{Sing}} 
\newcommand{\bphi}{\boldsymbol{\varphi}}
\newcommand{\modp}{{\rm mod}(p)} 
\newcommand{\Ha}{\mathcal{H}} 
\newcommand{\eps}{\varepsilon} 
\newcommand{\spt}{\mathrm{spt}} 
\newcommand{\dist}{\mathrm{dist}} 
\newcommand{\Lip}{\mathrm{Lip}} 
\renewcommand{\epsilon}{\varepsilon}
\def\XXint#1#2#3{{\setbox0=\hbox{$#1{#2#3}{\int}$ }
\vcenter{\hbox{$#2#3$ }}\kern-.6\wd0}}
\newcommand{\ssubset}{\subset\joinrel\subset}
\newcommand{\Iqs}{{\mathcal{A}}_Q(\R^{n})}
\def\a#1{\left\llbracket{#1}\right\rrbracket}
\newcommand{\abs}[1]{\lvert#1\rvert} 
\newcommand{\norm}[1]{\left\lVert#1\right\rVert} 
\newcommand{\etab}{\boldsymbol{\eta}}
\newcommand{\Iqspec}{{\mathscr{A}_Q (\R^n)}}
\newcommand{\Iqsn}{\overset{\circ}{\mathcal{A}_Q}(\R^{n})}
\newcommand{\IQSn}{\overset{\circ}{\mathscr{A}_Q}(\R^{n})}
\def\a#1{\left\llbracket{#1}\right\rrbracket}
\newcommand{\iso}{\boldsymbol{\iota}}
\newcommand\cM{{\mathcal{M}}}
\newcommand\Phii{{\mathbf{\Phi}}}
\newcommand\phii{{\mathbf{\varphi}}}
\newcommand\cH{{\mathcal{H}}}
\newcommand\bU{{\mathbf{U}}}
\newcommand\bT{{\mathbf{T}}}
\newcommand\cG{{\mathcal{G}}}
\newcommand\bef{{\mathbf{f}}}
\newcommand\beg{{\mathbf{g}}}
\begin{document}

\title[Area minimizing currents mod $2Q$: linear regularity theory]{Area minimizing currents mod $2Q$: \\ linear regularity theory}

\author{Camillo De Lellis}
\address{School of Mathematics, Institute for Advanced Study, 1 Einstein Dr., Princeton NJ 05840, USA,\\
and Universit\"at Z\"urich}
\email{camillo.delellis@math.ias.edu}

\author{Jonas Hirsch}
\address{Mathematisches Institut, Universit\"at Leipzig, Augustusplatz 10, D-04109 Leipzig, Germany}
\email{hirsch@math.uni-leipzig.de}

\author{Andrea Marchese}
\address{Dipartimento di Matematica, Universit\`a degli Studi di Trento, Via Sommarive 14, I-38123 Povo (TN), Italy}
\email{andrea.marchese@unitn.it}

\author{Salvatore Stuvard}
\address{Department of Mathematics, The University of Texas at Austin, 2515 Speedway, Stop C1200, Austin TX 78712-1202, USA}
\email{stuvard@math.utexas.edu}

\vspace{0.5cm}

\begin{abstract}
We establish a theory of $Q$-valued functions minimizing a suitable generalization of the Dirichlet integral. In a second paper the theory will be used to approximate efficiently area minimizing currents $\modp$ when $p=2Q$, and to establish a first general partial regularity theorem for every $p$ in any dimension and codimension.\\

\textsc{Keywords:} multiple valued functions, Dirichlet integral, regularity theory, area minimizing currents $\modp$, minimal surfaces, linearization.\\

\textsc{AMS Math Subject Classification (2010):} 49Q15, 49Q05, 49N60, 35B65, 35J47.
\end{abstract}

\maketitle


\medskip

\section{Introduction}

The aim of this work and its companion paper \cite{DLHMS} is to give a proof of the following partial regularity theorem (for the definition of area minimizing currents $\modp$ and the relevant terminology and notation we refer to \cite{DLHMS}):

\begin{theorem}\label{t:main_modp}
Assume $p\in \mathbb N\setminus \{0,1\}$ and $a_0>0$, $\Sigma\subset \mathbb R^{m+n}$ is a complete $C^{3, a_0}$ submanifold without boundary of dimension $m+\bar n$, $\Omega\subset \mathbb R^{m+n}$ is open and $T$ is an $m$-dimensional integer rectifiable current supported in $\Sigma$ which is area minimizing $\modp$ in $\Omega\cap \Sigma$. Then, the interior singular set $\sing (T)$ of $T$ has Hausdorff dimension at most $m-1$. If $p$ in addition is odd, then the singular set is countably $(m-1)$-rectifiable.
\end{theorem}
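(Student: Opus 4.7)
My plan is to adapt the Almgren / De Lellis--Spadaro blueprint for the partial regularity of area-minimizing integral currents to the mod $p$ setting, in three main stages. First, monotonicity of the mass ratio for area-minimizing mod $p$ currents yields a well-defined upper semi-continuous density, from which tangent cones exist and are themselves area-minimizing mod $p$ cones. Almgren's stratification of $\sing(T)$ into strata $\cS^k$ (points where no tangent cone is invariant under a $(k{+}1)$-parameter translation group) satisfies $\dim_H(\cS^k) \leq k$ by Federer-style dimension reduction; this carries over from the integral case essentially verbatim, since mod $p$ area-minimality is stable under blow-ups. The task therefore reduces to bounding the dimension of the flat stratum --- points where every tangent cone has the form $Q\a{\pi}$ for some integer $1 \leq Q \leq p/2$ and oriented $m$-plane $\pi$.

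Second, at such a flat singular point I would invoke the approximation theorem to be proved in the companion paper \cite{DLHMS}: in a small ball, $T$ is $L^2$-close to the graph of a multi-valued function $u$. When $Q < p/2$, $u$ is (up to small error) a classical Almgren $Q$-valued Dirichlet minimizer, and the De Lellis--Spadaro theory gives $\dim_H(\sing u) \leq m-2$ together with $(m-2)$-rectifiability. When $2Q = p$ (possible only for even $p$), the correct linearization is instead through the \emph{special} $Q$-valued functions minimizing the modified Dirichlet energy developed in the present paper, whose singular set obeys only $\dim_H \leq m-1$: the extra stratum reflects junction-type singularities arising because two sheets of opposite orientation annihilate mod $p$. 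To transfer these linear bounds to $T$ itself, I would construct a center manifold $\mathcal{M}$ approximating the ``average'' of the sheets of $T$ together with a normal multi-valued approximation $N$; a careful blow-up argument then identifies singular points of $T$ with singular points of $N$ up to negligible error, importing the dimension bound.

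Third, for the rectifiability statement when $p$ is odd, the key observation is that $p/2 \notin \mathbb{Z}$, so the density-$p/2$ case never arises, the ``special'' functions do not enter, and only the classical Almgren-type theory applies at each flat singular point. The $(m-1)$-rectifiability of $\sing(T)$ then follows from combining the classical $(m-2)$-bound of De Lellis--Spadaro on the ``branch'' part with a direct geometric analysis of the junction stratum, which for odd $p$ is topologically forced into a locally finite union of $(m-1)$-graphs over the tangent plane. The principal obstacle throughout is the even case: constructing a center manifold in the presence of density-$p/2$ points where sheets carry opposite orientations requires an averaging procedure compatible with mod $p$ cancellation, and the approximation theorem must yield quantitative $L^2$-estimates in this degenerate regime. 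The linear regularity theory for special $Q$-valued functions developed in the present paper is precisely the analytical foundation on which this program will rest.
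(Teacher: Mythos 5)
Your proposal matches the strategy this paper outlines in its introduction, but note that the present paper does \emph{not} itself prove Theorem~\ref{t:main_modp}: the abstract and introduction state explicitly that the proof is deferred to the companion paper \cite{DLHMS}, and what the present paper contributes is only the ``linear theory'' that you correctly identify as the analytical foundation, namely the definition of $\Iqspec$, the Sobolev theory, existence and compactness of $\Dir$-minimizers, H\"older regularity, monotonicity of the frequency function, the bound $\dim_{\Ha}(\sing(u)) \le m-1$ (Theorem~\ref{t:est_sing_Dir-min}), and the Taylor-expansion and reparametrization toolbox for $\Iqspec$-valued graphs needed by \cite{DLHMS}. Within this scope your account is accurate: the trichotomy between classical $Q$-valued linearization (density $< p/2$), special $Q$-valued linearization ($p=2Q$, density $Q$), and the observation that for odd $p$ the special case never arises is precisely the picture described here.

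The one place where your sketch is substantively weaker than a proof is the countable $(m-1)$-rectifiability for odd $p$. The claim that the junction (non-flat) stratum is ``topologically forced into a locally finite union of $(m-1)$-graphs'' is unjustified: local finiteness is considerably stronger than the countable rectifiability asserted by the theorem and should not be expected at this generality, and even countable rectifiability of the top singular stratum is a delicate matter requiring a Naber--Valtorta-type quantitative stratification argument or a bespoke tangent-cone analysis in the spirit of Simon; it does not follow from the classical De Lellis--Spadaro bound on the flat part. Nothing in the present paper addresses this, so it should be flagged as a key ingredient whose details must come from \cite{DLHMS}, not presented as a foregone geometric consequence. A smaller imprecision: the Almgren stratification reduces the problem to showing that $\Ha^{m-1+\alpha}$-a.e.\ \emph{flat} point is regular, not to ``bounding the dimension of the flat stratum'' (which, containing all regular points, has full dimension $m$).
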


The above result provides an affirmative answer in full generality to a question of B. White; see \cite[Problem 4.20]{open_GMT}. Prior to our work, some of the conclusions above were only known in some special cases. More precisely, in general codimension $\bar n >1$:
\begin{itemize}
\item[(a)] For $m =1$ it is elementary that $\sing (T)$ is discrete (and empty when $p=2$);
\item[(b)] In general, Allard's interior regularity theory for stationary varifolds, cf. \cite{Allard72}, implies that $\sing (T)$ is a closed meager set in $(\spt^p (T)\cap \Omega)\setminus \spt^p (\partial T)$;
\item[(c)] For $p=2$ $\sing (T)$ has Hausdorff dimension at most $m-2$ by Federer's classical work \cite{Federer70}; moreover the same reference shows that such set is in fact discrete when $m=2$; for $m>2$ its $(m-2)$-rectifiability was first proved in \cite{Simon95}, and the recent work \cite{NV} implies in addition that it has locally finite $\mathcal{H}^{m-2}$ measure.
\end{itemize}
In the case of codimension $\bar n =1$ it was additionally known that:
\begin{itemize}
\item[(d)] When $p=2$, the singular set has $(m-2)$-dimensional Hausdorff measure zero even in the case of minimizers of general uniformly elliptic integrands, see \cite{ASS}; for the area functional, using \cite{NV}, one can conclude additionally that it is $(m-3)$-rectifiable and has locally finite $\mathcal{H}^{m-3}$ measure; 
\item[(e)] When $p=3$ and $m=2$, \cite{Taylor} gives a complete description of the singularities, which consist of $C^{1, \alpha}$ arcs where three regular sheets meet at equal angles;
\item[(f)] When $p$ is odd, \cite{White86} shows that the singular set has vanishing $\mathcal{H}^m$-Hausdorff measure for minimizers of a uniformly elliptic integrand, and that it has Hausdorff dimension at most $m-1$ for minimizers of the area functional;
\item[(g)] When $p=4$, \cite{White79} shows that minimizers of uniformly elliptic integrands are represented by {\em immersed manifolds} outside of a closed set of zero $\mathcal{H}^{m-2}$ measure. 
\end{itemize}
Our proof of Theorem \ref{t:main_modp} follows the blueprint of Almgren's partial regularity theory for area minimizing currents as worked out in the papers \cite{DLS_Qvfr,DLS_Currents,DLS_Lp,DLS_Center,DLS_Blowup}. First of all, thanks to the general stratification theorem of the singular set, for every $\alpha >0$ we know that at $\mathcal{H}^{m-1+\alpha}$-a.e. $x\in \spt^p (T)\setminus \spt^p (\partial T)$ there is at least one tangent cone which is flat, namely an integer multiple of an $m$-dimensional plane. If we call such points ``flat'', the main dimension estimate in Theorem \ref{t:main_modp} is achieved by showing that, for every $\alpha>0$, $\mathcal{H}^{m-1+\alpha}$-a.e. flat point $x$ is in fact regular. Every flat point $x$ where the density of $T$ is $1$ is indeed regular by Allard's celebrated theorem. The problem arises when the multiplicity is higher than $1$, because there are examples of singular flat points. For area minimizing {\em integral} currents such examples exist only in codimension $\bar n \geq 2$, whereas for area minimizing currents $\modp$ such examples can be found also in codimension $\bar n =1$ if $p$ is even and larger than $2$, see for instance Example \ref{e:esempio} below. 

An essential step in Almgren's theory is the approximation of the area minimizing currents, in regions where they are sufficienly close to an integer multiple of a plane, with multivalued functions which almost minimize an appropriate generalization of the Dirichlet energy. We will call ``\emph{linear theory}'' the corresponding existence and regularity theory for those objects. In the case of integral currents a typical example where multivalued functions are needed is in the approximation of the current $\a{\Lambda}$ induced by the holomorphic curve 
\[
\Lambda = \{(z,w)\in \mathbb C^2 : z^2 = w^3\}
\] 
in a neighborhood of the origin (which is indeed a singular flat point of multiplicity $2$). One way of understanding multiple-valued functions which take a fixed number $Q$ of values is to model them as maps into the space of atomic measures with positive integral coefficients and total mass $Q$. For instance, slicing the the current $\Lambda$ with (real) two-dimensional planes orthogonal to $\{(z,0): z\in \mathbb C\}$, for each $z\in \mathbb C\setminus \{0\}$ we find an integral $0$-dimensional current which is the sum of two positive atoms:
\[
\sum_{w^3 =z^2} \a{(z,w)}\, .
\]
Such maps can be efficiently used to approximate area-minimizing currents $T$ $\modp$ in a neighborhood of a flat point $x$ when
\begin{itemize}
\item either $p$ is odd;
\item or $p$ is even and the density $Q$ of $T$ is strictly smaller than $\frac{p}{2}$.
\end{itemize}
When studying area-minimizing currents $\modp$ for an even modulus $p=2Q$ in a neighborhood of a flat point of density $Q$, the ``classical'' multivalued functions are not anymore the appropriate maps, as it is witnessed by the following example, taken from \cite{White79}. 

\begin{example}\label{e:esempio}
Consider an open subset $\Omega \subset \mathbb R^2$ and two smooth functions $f,g: \Omega \to \mathbb R$ which solve the minimal surfaces equation in $\Omega$. Assume in addition that the set $\{f=g\}$ contains a curve $\gamma$  which divides $\Omega$ into two regions $\Omega_+$ and $\Omega_-$. Two explicit $f$ and $g$ are easy to find. The reader could take $\Omega$ to be a suitable ball $B$ centered at the origin, $f\equiv 0$, and let $g$ be the function which describes Enneper's minimal surface in a neighborhood of $0$. The set $\{f=g\}$ is then given by $\{(x,y) : x=\pm y\}\cap B$ and $\gamma$ can be taken to be the segment $\{x=y\}\cap B$ while $\Omega_+$ and $\Omega_-$ would then be $B \cap \{x>y\}$ and $B \cap \{x<y\}$, respectively. 

We then define the following integral current $T$. Its support is the union of the graphs of $f$ and $g$. However, while the portions of such graphs lying over $\Omega_+$ will be taken with the standard orientation induced by $\Omega$, the portions lying over $\Omega_-$ will be taken with the opposite orientation. In $\Omega\times \mathbb R$, the boundary of $T$ is $4 \a{\gamma}$. Moreover, by the structure theorem \cite{White79}, the current is area minimizing $\mod (4)$, because the graphs of $f$ and $g$ are both area minimizing currents $\mod (2)$ (this could be proved using, for instance, the maximum principle). 
\end{example}

The origin is a flat point of multiplicity $Q=2$ for the current $T$ above. By a simple rescaling procedure a good approximation of $T$ in a neighborhood of the origin is given by the graphs of the second order Taylor polynomials of $f$ and $g$ at the origin. These are harmonic polynomials. For the specific case described above where $f=0$ and the graph of $g$ is Enneper's surface, such functions are $f_0 (x,y) =0$ and $g_0 (x,y) = 3 (x^2-y^2)$. This gives an obvious set-theoretic approximation of the support of the current $T$. In the approach that we outline in the rest of the paper, we will give to this set a structure of ``special $2$-valued function'' $h$, where we consider the value $h(x,y)$ to be the sum of the two positive atoms $\a{f_0 (x,y)} + \a{g_0 (x,y)}$ on $\Omega_+ = B \cap \{x>y\}$ and the sum of two negative atoms $-\a{f_0 (x,y)} - \a{g_0 (x,y)}$ on $\Omega_- = B \cap  \{x<y\}$. Such a choice is natural in view of the fact that the slices of the current $T$ with lines orthogonal to the plane $\{(x,y,0): x,y\in  \mathbb R\}$ are given by $\a{f (x,y)} + \a{g (x,y)}$ for $(x,y)\in \Omega_+$ and $- \a{f (x,y)} - \a{g (x,y)}$ for $(x,y)\in \Omega_-$.

Motivated by the above example, roughly speaking ``special $2$-valued functions'' will be maps from $\Omega$ into the space of atomic measures with mass $2$ satisfying the following requirements (cf. Definition \ref{def:Iqstronzo} and Definition \ref{d:pieces}):
\begin{itemize}
\item The value of the map at any point in $\Omega$ is always either the sum of two positive atoms or the sum of two negative atoms;
\item The domain $\Omega$ is subdivided by each map into three regions, the ``positive region'' where the values are two positive {\em disinct atoms}, the ``negative region'' where the values are two {\em distinct} negative atoms and the ``interface'', or "collapsed region", where the values are atoms counted with multiplicity $2$: whether with a plus or minus sign, this will be of no relevance, because we will identify $-2 \a{z}$ and $2\a{z}$ (which are equivalent $0$-dimensional currents $\mod (4)$).
\end{itemize}
Roughly speaking, if the special $2$-valued map is continuous, then the collapsed region disconnects the ``positive'' and the ``negative'' ones. 

A natural Dirichlet energy, which comes out of Taylor expanding the area functional on the original current, is the sum of the Dirichlet energies of the various sheets: with such definition, the special $2$-valued function $h$ considered above is a minimizer of the Dirichlet energy, namely any competitor which coincides with it outside a compact set $K\subset \subset B$ has at least the same energy. This could be proved in an elementary way in our specific example, but it is also a general fact.

The reader might wonder why we introduce such complicated objects, rather than simply considering the union of 
the two graphs of $f_0$ and $g_0$ as a classical $2$-valued function (namely, always taking positive atomic measures as values) as in \cite{DLS_Qvfr}). The point is that with the latter choice, the resulting $2$-valued function would not be a minimizer of the Dirichlet energy. A better competitor could be easily constructed by considering the following functions $\bar f$ and $\bar g$: both are harmonic in $B_1 (0)$ and their values on $\partial B_1 (0)$ are, respectively:
\begin{equation}
\bar f (x,y) = 
\left\{\begin{array}{ll}
3 (x^2-y^2) & \mbox{if $|x| \geq |y|$}\\
0 & \mbox{if $|x|\leq |y|$}\\
\end{array}\right.
\end{equation}
\begin{equation}
\bar g (x,y) = 
\left\{\begin{array}{ll}
0 & \mbox{if $|x| \geq |y|$}\\
3 (x^2-y^2) & \mbox{if $|x|\leq |y|$}\\
\end{array}\right.
\end{equation}

The example above also shows that the regularity theory for Dirichlet-minimizing special $Q$-valued functions must necessarily allow for a larger set of singularities than its classical counterpart: indeed, for the special $2$-valued map $h$ constructed above any reasonable definition of the singular set ${\rm Sing}(h)$ must be such that $\{ x = \pm y \} \cap B \subset {\rm Sing}(h)$, thus implying that the standard result $\dim_{\Ha}({\rm Sing}(u)) \leq m-2$ valid for a classical $Q$-valued map $u$ defined on an $m$-dimensional domain and minimizing the Dirichlet energy (or even natural perturbations of the Dirichlet energy, see e.g. \cite{St_Jac}) cannot hold true in our context. \\

\smallskip

Before proceeding with our analysis, let us remark that, in the paper \cite{Almgren_modp}, F. Almgren seems to initiate the investigation of a class of objects which are conceptually analogous to our special multiple valued functions. More precisely, Almgren's ``multi-functions $\modp$'' are defined as mappings taking values in the space of $0$-dimensional integral polyhedral chains $\modp$. The theory outlined in \cite{Almgren_modp} may have some points in common with the content of Sections \ref{sec:def} and \ref{sec:ispec_curr} of the present work, as well as Section $10$ of \cite{DLHMS}. The Dirichlet energy and the corresponding regularity theory, on the other hand, are not mentioned in \cite{Almgren_modp}, which rather seems to focus on describing the geometric properties of a class of  \emph{piecewise affine} multi-functions, which have the property to induce, via push-forward, dimension-preserving homomorphisms of the space of polyhedral chains. Since Almgren did not pursue this line of research anymore in later works, we don't know whether his ultimate goal was to seek a regularity theory for minimizing currents $\modp$ along the lines of his Big Regularity Paper \cite{Almgren_brp}.

\subsection{Plan of the paper} The first part of the paper aims at establishing the optimal partial regularity result for special $Q$-valued functions minimizing the Dirichlet energy. After providing the precise definition of the space $\mathscr{A}_Q(\R^n)$ of special $Q$-points in $\R^n$ and introducing the corresponding Sobolev spaces of $\mathscr{A}_Q(\R^n)$-valued maps, we show that any $\Dir$-minimizing special $Q$-valued function $u$ is H\"older continuous with respect to the natural metric space structure of $\mathscr{A}_Q(\R^n)$, and then that the - suitably defined - set ${\rm Sing}(u)$ of singular points of $u$ is a closed subset of the $m$-dimensional domain of $u$ having Hausdorff dimension $\dim_{\Ha}({\rm Sing}(u)) \leq m-1$. We will then conclude the paper with some results concerning the geometry of (the currents associated to) the graphs of special multiple-valued functions, which will be crucial for the analysis to be carried out in \cite{DLHMS}.\\

\medskip

{\bf Acknowledgments:} C.D.L. acknowledges the support of the NSF grants DMS-1946175 and DMS-1854147. A.M. was partially supported by INdAM GNAMPA research projects. The work of S.S. was supported by the NSF grants DMS-1565354, DMS-RTG-1840314 and DMS-FRG-1854344.

\section{Definition of $\Iqspec$ and metric properties} \label{sec:def}

For the classical $Q$-valued maps in $\R^n$, denoted $\Iqs$, we follow the terminology, notation and definitions
of \cite{DLS_Qvfr}. We first introduce the disjoint union  $\Iqs \sqcup\Iqs$, which we identify with $\Iqs \times \{-1,1\}$. Hence, an element
in $\Iqs \sqcup\Iqs$ will be denoted by $(S, \epsilon)$, where $S$ is an element of the space $\Iqs$ of atomic measures with
positive integer coefficients and mass $Q$ (namely $S = \sum_{i=1}^Q \a{P_i}$ for $P_i \in \mathbb R^n$) and $\epsilon$ equals either $1$ or $-1$.

Moreover, it is convenient to introduce the following notation. Recall that $\G(\cdot,\cdot)$ denotes the distance function in $\A_Q(\R^n)$.

\begin{definition}
If $S= \sum_i \a{S_i}\in \Iqs$ and $v\in \mathbb R^n$, then $|S|^2 := \G (S, Q\a{0})^2$ and
\begin{align*}
S \oplus v & := \sum_i \a{S_i+v}\\
S \ominus v & := S \oplus (-v) = \sum_i \a{S_i-v}\, .
\end{align*}
\end{definition}

Note that, using $\etab(S):=\frac{1}{Q} \sum_i S_i$, we get 
\begin{align}\label{eq:norms and averages1}
|S|^2 &= |S\ominus\etab (S)|^2 + Q|\etab (S)|^2\\
\G (A,B)^2 &= \G (A\ominus \etab (A), B \ominus \etab (B))^2 +Q |\etab (A) - \etab (B)|^2 \label{eq:norms and averages2}
\end{align}

\begin{definition}\label{def:Iqstronzo}
We denote by $\Iqspec$ the quotient space 
\[ \Iqspec:= \Iqs \sqcup\Iqs / \sim \]
where $\sim$ is the equivalence relation given by
\begin{align}
(S,-1) \sim (T,1) &\iff\; \exists p \in \R^n \text{ with } S=Q\a{p}=T\, ,\\
(S,1) \sim (T,1)  &\iff\; S = T\, ,\\
(S,-1) \sim (T,-1) &\iff\; S=T\, .
\end{align}
We endow $\Iqspec$ with the metric
\begin{equation}\label{eq:stronzo-metric}
\G_s((S,\alpha), (T,\beta))^2= \begin{cases} \G(S,T)^2 &\text{ if } \alpha = \beta \\ \\
|S\ominus \etab (S)|^2 + |T\ominus\etab (T)|^2 + Q |\etab (S) - \etab (T)|^2 
&\text{ if } \alpha \neq \beta.
\end{cases}
\end{equation}
\end{definition}

\begin{remark} We can consider $\G_s$ as a pseudometric in $\Iqs \sqcup\Iqs$: $\Iqspec$ results then from quotienting the corresponding pseudometric space to a metric space. It is hence straightforward to check that the quotient space topology coincides with the metric topology generated by $\mathcal{G}_s$. \\
Furthermore, for each $\alpha \in \{-1,1\}$ the injection $i_\alpha : \Iqs \ni S \mapsto (S, \alpha)\in \Iqspec$ is an isometry. 
\end{remark}

Given the identification of $(Q\a{p}, 1)$ with $(Q\a{p}, -1)$, in the sequel we will often use the simplified notation $Q\a{p}$ to denote both points in $\Iqspec$.

Since working with the above definition of $\Iqspec$ is sometimes inconvenient, we will next provide a useful characterization. 
We start by introducing the convention that, if $(X,d)$ and $(Y, \delta)$ are two metric spaces, then, unless otherwise specified, we endow the product space $X \times Y$ with the product
metric 
\[
d\times \delta ((x,y), (v,w)) := \sqrt{d (x,v)^2 + \delta (y,w)^2}\, .
\]

\begin{definition}
We denote by
\begin{itemize}
\item $\Iqsn$ the space $\{ T \in \Iqs \colon \etab(T)= 0\} \subset \Iqs$ endowed with the metric $\G$;
\item $\IQSn$ the space $\{ (T,S) \in \Iqsn \times \Iqsn \colon \min\{\abs{T}, \abs{S}\} = 0 \}$ endowed with the metric $\G\times \G$.
\end{itemize}
\end{definition}

\begin{remark}
Observe that 
\[
\IQSn = \left(\Iqsn\times \{Q \a{0}\}\right) \cup \left( \{Q\a{0}\} \times \Iqsn\right)  \subset \Iqsn\times \Iqsn\, .
\]
\end{remark}

\begin{proposition}\label{prop.isometry}
Consider the metric spaces $(\IQSn, \G\times \G)$ and $(\mathbb R^n, d)$ where 
\[
d(x,y) = \sqrt{Q} |x-y|\, .
\]
Endow the product $\IQSn\times \mathbb R^n$ with the corresponding product metric $(\G\times \G)\times d$. Then the map $\iso: \Iqspec \to \IQSn\times \mathbb R^n$ given by
\[
\iso (T, \epsilon) :=
\begin{cases} (T\ominus \etab(T) , Q \a{0}, \etab(T)) &\text{ if } \epsilon =1\,,\\ \\ 
(Q \a{0}, T \ominus \etab(T) , \etab(T)) & \text{ if } \epsilon =-1\end{cases}
\]
is an isometry with inverse
\[
\iso^{-1} (A,B, p) = \begin{cases} (A \oplus p , 1) & \text{ if } \abs{B}=0 \\ \\
(B \oplus p, -1) & \text{ otherwise}. \end{cases}
\]
\end{proposition}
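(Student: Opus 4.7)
The statement has three ingredients to check: (i) $\iso$ descends to a well-defined map on the quotient $\Iqspec$; (ii) the stated formula for $\iso^{-1}$ is a genuine two-sided inverse, hence $\iso$ is a bijection; and (iii) $\iso$ preserves the metrics. I will address them in this order.

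\textbf{Well-definedness on the quotient.} The only non-trivial identifications in $\sim$ are the pairs $(Q\a{p},1)\sim(Q\a{p},-1)$ for $p\in\R^n$. For such $T=Q\a{p}$ one has $\etab(T)=p$ and hence $T\ominus\etab(T)=Q\a{0}$. Plugging into either branch of the definition of $\iso$ I get the same image $(Q\a{0},Q\a{0},p)\in\IQSn\times\R^n$, so $\iso$ factors through the quotient. Note that by definition of $\etab$ I have $\etab(T\ominus\etab(T))=0$, so the image indeed lies in $\Iqsn\times\Iqsn$, and since one of the two factors is $Q\a{0}$ it lies in $\IQSn$.

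\textbf{Bijection.} I would verify directly that $\iso^{-1}\circ\iso=\mathrm{id}$ and $\iso\circ\iso^{-1}=\mathrm{id}$. For $(T,1)$ the image has $B=Q\a{0}$ (so $|B|=0$), and the first case of $\iso^{-1}$ returns $((T\ominus\etab(T))\oplus\etab(T),1)=(T,1)$. For $(T,-1)$ with $T$ not collapsed, $|T\ominus\etab(T)|>0$, the second case of $\iso^{-1}$ applies, and one recovers $(T,-1)$; if instead $T=Q\a{p}$ is collapsed, then $\iso(T,-1)=(Q\a{0},Q\a{0},p)$, which by the convention $|B|=0$ is sent to $(Q\a{p},1)$, and this equals $(Q\a{p},-1)$ in $\Iqspec$ by the identification. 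Conversely, given $(A,B,p)\in\IQSn\times\R^n$ with, say, $|B|=0$, I have $B=Q\a{0}$ and $\iso^{-1}(A,B,p)=(A\oplus p,1)$; since $A\in\Iqsn$, $\etab(A\oplus p)=p$ and $(A\oplus p)\ominus p=A$, so $\iso$ sends it back to $(A,Q\a{0},p)$. The case $|A|=0$ (which includes the overlap $|A|=|B|=0$) is symmetric. The slightly delicate bookkeeping is precisely the collapsed case, where the two branches of $\iso^{-1}$ could a priori disagree; it is essential that in the quotient $\Iqspec$ the two preimages $(Q\a{p},\pm1)$ are identified, which removes the ambiguity.

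\textbf{Isometry.} With the identifications in place, this is a routine computation from \eqref{eq:norms and averages1}--\eqref{eq:norms and averages2} and the definition \eqref{eq:stronzo-metric} of $\G_s$. I split into cases according to the signs. If $\alpha=\beta$ (WLOG $=1$), then
\[
(\G\times\G\times d)(\iso(T,1),\iso(T',1))^2 = \G(T\ominus\etab(T),T'\ominus\etab(T'))^2 + Q|\etab(T)-\etab(T')|^2,
\]
which equals $\G(T,T')^2=\G_s((T,1),(T',1))^2$ by \eqref{eq:norms and averages2}. If $\alpha\neq\beta$ (say $\alpha=1$, $\beta=-1$), then the $\Iqsn\times\Iqsn$ distance between $\iso(T,1)$ and $\iso(T',-1)$ decouples into $\G(T\ominus\etab(T),Q\a{0})^2+\G(Q\a{0},T'\ominus\etab(T'))^2=|T\ominus\etab(T)|^2+|T'\ominus\etab(T')|^2$, and adding $Q|\etab(T)-\etab(T')|^2$ from the $\R^n$-factor matches the second branch of \eqref{eq:stronzo-metric} exactly.

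\textbf{Expected obstacle.} Nothing here is conceptually hard; the main point demanding care is the collapsed case $T=Q\a{p}$, where one must verify that the quotient identification $(Q\a{p},1)\sim(Q\a{p},-1)$ is exactly what is needed to make both the definition of $\iso$ and the formula for $\iso^{-1}$ unambiguous. All the metric identities then reduce to \eqref{eq:norms and averages2}.
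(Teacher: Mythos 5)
Your proof is correct and follows essentially the same route as the paper's: the paper also verifies the isometry by separating the case $\alpha=\beta$ (reduced via \eqref{eq:norms and averages2} to the isometry $A\mapsto(A\ominus\etab(A),\etab(A))$) from the case $\alpha\neq\beta$ (read off from \eqref{eq:stronzo-metric}). The only difference is one of emphasis: the paper dismisses well-definedness and bijectivity as ``clear'' and ``obvious,'' whereas you spell out the collapsed-point bookkeeping $(Q\a{p},1)\sim(Q\a{p},-1)$ explicitly, which is the one place where care is genuinely required.
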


In view of the previous proposition the metric $\G\times \G$ will be denoted by $\G_s$ when restricted to $\IQSn$. 

\begin{proof}
It is clear that the maps $\iso$ and $\iso^{-1}$ are well defined, and it is also obvious that $\iso \circ \iso^{-1}$ and $\iso^{-1}\circ \iso$ are the identity maps of the appropriate spaces.

Next, if we endow $\Iqsn\times \R^n$ with the product metric $\G\times d$, by \eqref{eq:norms and averages2} it is obvious that the map $\Iqs \ni A \mapsto (A \ominus \etab(A), \etab (A))\in \Iqsn\times \R^n$ is an isometry with inverse $(A, v) \mapsto A \oplus v$. In particular this shows that, for any fixed $\varepsilon \in \{-1,1\}$, the following holds
\[
((\G\times \G) \times d) (\iso (T,\epsilon), \iso (S, \epsilon)) = \G_s ((T, \epsilon), (S, \epsilon))\, .
\]
On the other hand the identity $((\G\times \G) \times d) (\iso (T,1), \iso (S, -1)) = \G_s ((T, 1), (S, -1))$ is obvious from the definition of $\G_s$. 
\end{proof}

For further use, it is very convenient to introduce the following notations:

\begin{definition}\label{d:pieces}
Let $u: E \to \Iqspec$ be a Borel map, and consider the map $(v,w, z) = \iso \circ u$. We then define:
\begin{align}
\etab \circ u &:= z\label{e:mean_spec}\\
u^+ &:= v \oplus z\label{e:positive_spec}\\
u^- &:= w \oplus z\label{e:negative_spec}\\
E_+ &:= \{|v| > 0\}\label{e:positive_domain}\\
E_- &:= \{|w| > 0\}\label{e:negative_domain}\\
E _0 &:= \{|v|=|w| = 0\}\label{e:collapsed_domain}\, .
\end{align}
Note in particular that $E_+, E_-$ and $E_0$ are pairwise disjoint and their union is $E$: $E_+, E_-$ and $E_0$ will be called the {\em canonical decomposition} of $E$ induced by the map $u$. These sets are those loosely described as positive, negative and collapsed regions in the example discussed in the introduction.  

Similarly, consider a point $P = (R,S, z)\in \IQSn\times \mathbb R^n$ and a vector $z'\in \mathbb R^n$. We denote by $P\oplus z'$, resp. $P \ominus z'$, the
points $(R, S, z+z')$ and $(R,S, z-z')$. 
\end{definition}

The following is thus an obvious corollary of Proposition \ref{prop.isometry}. 

\begin{corollary}\label{c:separate_Lip}
Let $u: E \to \Iqspec$ be Lipschitz. Then $E_+, E_-\subset E$ are relatively open and $E_0\subset E$ is relatively closed. Moreover $\etab\circ u, u^+$ and $u^-$ are all Lipschitz and their Lipschitz constants are at most $\Lip (u)$. 
More generally, if $u$ is merely continuous, then $\etab\circ u, u^+$ and $u^-$ are also continuous and their moduli of continuity are at most that of $u$. 
\end{corollary}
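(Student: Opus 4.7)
The plan is to read the corollary off Proposition \ref{prop.isometry} by projecting onto the three factors of $\IQSn\times \R^n$. Write $\iso\circ u=(v,w,z)$, so that by Definition \ref{d:pieces}
\[
\etab\circ u = z,\qquad u^+=v\oplus z,\qquad u^-=w\oplus z,
\]
and
\[
E_+=\{|v|>0\},\qquad E_-=\{|w|>0\},\qquad E_0=\{|v|=|w|=0\}.
\]
Since $\iso$ is an isometry and the target carries the product metric $(\G\times\G)\times d$, the three coordinate projections are $1$-Lipschitz. Consequently, for every $x,y\in E$,
\[
\G(v(x),v(y))\le \G_s(u(x),u(y)),\qquad \G(w(x),w(y))\le \G_s(u(x),u(y)),\qquad \sqrt{Q}\,|z(x)-z(y)|\le \G_s(u(x),u(y)).
\]
In particular $v$, $w$ and $z$ inherit, respectively, Lipschitz constant (or modulus of continuity) bounded by that of $u$; for $\etab\circ u=z$ we even get $\Lip(\etab\circ u)\le Q^{-1/2}\Lip(u)\le \Lip(u)$.

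Second, the openness of $E_\pm$ follows by continuity. Being the composition of the continuous $u$ with the projection through $\iso$, the map $v$ is continuous from $E$ to $\Iqsn$, and hence the real-valued function $x\mapsto |v(x)|=\G(v(x),Q\a{0})$ is continuous on $E$; therefore $E_+=|v|^{-1}((0,\infty))$ is relatively open in $E$. The same argument applies to $E_-$, and then $E_0=E\setminus(E_+\cup E_-)$ is automatically relatively closed.

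Third, the required estimates for $u^\pm$ follow from the elementary identity \eqref{eq:norms and averages2}. Since $\etab(v(x))=\etab(w(x))=0$ by the definition of $\Iqsn$, we have $\etab(u^+(x))=z(x)$ and $u^+(x)\ominus \etab(u^+(x))=v(x)$; plugging this into \eqref{eq:norms and averages2} gives
\[
\G(u^+(x),u^+(y))^2=\G(v(x),v(y))^2 + Q\,|z(x)-z(y)|^2 \le \G_s(u(x),u(y))^2,
\]
and analogously (with $v$ replaced by $w$) for $u^-$. This yields $\Lip(u^\pm)\le\Lip(u)$ and, by the same chain of inequalities, that any modulus of continuity of $u$ serves as a modulus of continuity for $u^+$ and $u^-$ as well.

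No genuine obstacle is expected: the corollary is, essentially by design, a direct consequence of the product-metric presentation of $\Iqspec$ established in Proposition \ref{prop.isometry}, combined with the splitting \eqref{eq:norms and averages2} of $\G$ into its mean and recentered parts.
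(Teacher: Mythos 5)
Your proof is correct and is exactly the argument the paper has in mind: the paper presents the corollary as an immediate consequence of Proposition \ref{prop.isometry}, and your write-out simply makes explicit the $1$-Lipschitz projections onto the three factors of $\IQSn\times\R^n$, the splitting \eqref{eq:norms and averages2} to recombine $v$ (resp.\ $w$) with $z$, and the fact that $E_\pm$ are preimages of open sets under the continuous map $x\mapsto|v(x)|$ (resp.\ $|w(x)|$).
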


Recall that any Lipschitz map $F: \R^{\bar{n}} \to \R^n$ induces a natural map $F:\mathcal{A}_Q (\R^{\bar{n}}) \to \mathcal{A}_Q (\R^n)$ via 
\[
F \left(\sum_i \a{T_i}\right) := \sum_i \a{F(T_i)}\, ,
\]
and hence a natural map $F:\mathscr{A}_Q(\R^{\bar{n}}) \to \mathscr{A}_Q(\R^n)$ by
\[ F((T, \alpha)):= (F(T), \alpha) = \left(\sum_{i} \a{F(T_i)}, \alpha\right) \qquad \mbox{if $T = \sum_{i} \a{T_i}$}\,. \]
In terms of the identification above we have 
\begin{align*}
	 \left(\iso\circ F\circ \iso^{-1}\right)\left( (R,S,z)\right) &= \begin{cases} \left( F(R\oplus z) \ominus \etab (F(R\oplus z)), Q\llbracket 0 \rrbracket,   \etab (F(R\oplus z))\right)& \text{ if } S =Q \llbracket 0 \rrbracket \\ \left( Q \llbracket 0 \rrbracket, F(S\oplus z) \ominus \etab (F(S\oplus z)), \etab (F(S\oplus z)) \right) &\text{ if } R =Q\llbracket 0 \rrbracket\,. \end{cases}
\end{align*}

\section{Sobolev spaces, differentiability and Dirichlet energy}

The embedding $\iso$ allows to provide a straightforward definition of the Sobolev spaces $W^{1,p} (\Omega, \Iqspec)$ using the theory developed in \cite{DLS_Qvfr}. 
Similarly, we shall define the Dirichlet energy and its density.

\begin{definition}\label{d:Sobolev}
Let $\Omega$ be an open subset of a $C^1$ manifold. 
We say that the function $u: \Omega \to \Iqspec$ belongs to the Sobolev space $W^{1,p} (\Omega, \Iqspec)$ if each of the maps $v,w,z$ given
by $\iso (u) = (v,w,z)$ belongs to the respective $W^{1,p}$ space. 

If $u\in W^{1,2}$ we then define $|Du|^2 := |Dv|^2 + |Dw|^2 + Q |Dz|^2$ and the corresponding Dirichlet energy
\[
\Dir (u, \Omega) := \int_\Omega |Du|^2 = \Dir (v, \Omega) + \Dir (w, \Omega) + Q \, \Dir (z, \Omega)\, .
\]
\end{definition}

Observe the validity of the identity (which holds as well for the ``classical'' $Q$-valued $W^{1,p}$ spaces)
\begin{equation}\label{e:split_energy}
\Dir (u, \Omega) = \Dir (u\ominus \etab \circ u, \Omega) + Q \, \Dir (\etab\circ u, \Omega)\, .
\end{equation}

Using the definition above, one concludes obviously the analogues of 
\begin{itemize}
\item The Lipschitz extension theorem, cf. \cite[Theorem 1.7]{DLS_Qvfr};
\item The trace theorem, cf. \cite[Proposition 2.10]{DLS_Qvfr};
\item The Sobolev embedding theorem, cf. \cite[Proposition 2.11]{DLS_Qvfr};
\item The Poincar\'e inequality, cf. \cite[Proposition 2.12]{DLS_Qvfr};
\item The Campanato-Morrey estimate of \cite[Proposition 2.14]{DLS_Qvfr}.
\end{itemize}
From now on we will use all the results above referring to the corresponding statements in \cite{DLS_Qvfr}. 

Next, it is useful to gain a local description of $|Du|$ in terms of the differentials of the maps $u^+, u^-$ and $\etab \circ u$. In particular this will allow
us to apply the calculus tools of \cite{DLS_Qvfr} making several computations straightforward.

\begin{proposition}\label{p:app_differentiability}
Assume $u\in W^{1,2} (\Omega, \Iqspec)$. The maps $u^+, u^-$ and $\etab\circ u$ are approximately differentiable at a.e. point $x\in \Omega$.
In particular, if we denote by $D u^+ = \sum_i \a{Du_i^+}$, $Du^- = \sum_i \a{Du_i^-}$ and $D (\etab\circ u)$ their approximate differentials (using the conventions
of \cite[Section 1.3 \& Section 2.2.1]{DLS_Qvfr}), then we have
\begin{equation}\label{e:id_differenziali}
|Du|^2 (x) = \begin{cases}
|Du^+|^2 (x) = \sum_i |Du_i^+|^2 \qquad &\mbox{for a.e. $x\in \Omega_+\cup \Omega_0$}\\
|Du^-|^2 (x) = \sum_i |Du_i^-|^2  \qquad &\mbox{for a.e. $x\in \Omega_-\cup \Omega_0$}\\
Q|D (\etab\circ u)|^2 (x) \qquad &\mbox{for a.e. $x\in \Omega_0$}\, .
\end{cases}
\end{equation}
\end{proposition}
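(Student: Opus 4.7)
I would begin by writing $(v,w,z) := \iso\circ u$; by Definition \ref{d:Sobolev} we have $v,w\in W^{1,2}(\Omega,\Iqsn)$ and $z\in W^{1,2}(\Omega,\R^n)$. The classical $Q$-valued Sobolev theory of \cite{DLS_Qvfr} then provides approximate differentials $Dv=\sum_i\a{Dv_i}$ and $Dw = \sum_i\a{Dw_i}$ at almost every point, and classical Sobolev theory does the same for $z$. Since by construction $u^+ = v\oplus z$, $u^- = w\oplus z$ and $\etab\circ u = z$, the three maps $u^+$, $u^-$, $\etab\circ u$ inherit approximate differentiability a.e., with local representations $Du_i^+ = Dv_i+Dz$ and $Du_i^- = Dw_i + Dz$ on the corresponding sets.

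The central step --- and, I expect, the only genuinely nontrivial one --- is to show that \emph{$Dv = 0$ a.e.\ on $\Omega_-\cup\Omega_0=\{|v|=0\}$ and $Dw = 0$ a.e.\ on $\Omega_+\cup\Omega_0=\{|w|=0\}$}. This is the multi-valued counterpart of the classical fact that the weak gradient of a Sobolev map vanishes almost everywhere on the preimage of any single value. I would derive it from the classical statement by composing with Almgren's biLipschitz embedding of $\Iqs$ into a Euclidean space: membership of $v$ in $W^{1,2}(\Omega,\Iqs)$ is equivalent to the classical $W^{1,2}$ membership of the composition, which on the preimage $\{v = Q\a{0}\}$ equals a constant vector, hence has vanishing classical differential a.e.\ there, and up to biLipschitz factors this controls $|Dv|^2 = \sum_i |Dv_i|^2$.

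Given this vanishing, \eqref{e:id_differenziali} is a short algebraic check. On $\Omega_+$ the vanishing $Dw = 0$ a.e.\ plus Definition \ref{d:Sobolev} give $|Du|^2 = |Dv|^2 + Q|Dz|^2$ a.e.; on the other hand, expanding and using that $v$ has identically zero mean (so $\sum_i Dv_i = Q\,D(\etab\circ v) = 0$ a.e.) one gets
\[
\sum_i |Du_i^+|^2 = \sum_i|Dv_i|^2 + 2\,Dz\cdot\sum_i Dv_i + Q|Dz|^2 = |Dv|^2 + Q|Dz|^2,
\]
which is the first line of \eqref{e:id_differenziali}. The argument on $\Omega_-$ for $u^-$ is symmetric. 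Finally, on $\Omega_0$ both $Dv$ and $Dw$ vanish, so $|Du|^2 = Q|Dz|^2 = Q|D(\etab\circ u)|^2$; moreover on $\Omega_0$ one has $u^\pm = Q\a{z}$ with $Du_i^\pm = Dz$ for every $i$, so $\sum_i|Du_i^\pm|^2 = Q|Dz|^2$, completing all three lines of \eqref{e:id_differenziali} on $\Omega_0$.
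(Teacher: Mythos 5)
Your proposal is correct and follows essentially the same route as the paper: decompose $\iso\circ u=(v,w,z)$, pull approximate differentiability from the classical $\Iqs$- and Sobolev theories, observe that $Dw$ (resp. $Dv$) vanishes a.e.\ on $\{|w|=0\}=\Omega_+\cup\Omega_0$ (resp.\ on $\{|v|=0\}=\Omega_-\cup\Omega_0$), and then conclude by the algebraic identity $\sum_i|Du_i^\pm|^2=\sum_i|Dv_i|^2+Q|Dz|^2$. The paper proves the vanishing more directly (constancy of $v,w$ on the level set plus approximate differentiability at density points), and cites \cite[Proposition 2.17]{DLS_Qvfr} where you expand by hand using $\sum_iDv_i=0$; both are minor variations on the same argument.
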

\begin{proof}  Let $\iso (u) = (v,w, z)$.  From the very definition we know that $\etab \circ u =z$ belongs to $W^{1,2}(\Omega, \R^n)$. Next observe that, if $a\in W^{1,2} (\Omega, \Iqs)$ and $b\in W^{1,2} (\Omega, \mathbb R^n)$, then
$a\oplus b\in W^{1,2} (\Omega, \A_Q(\mathbb R^n))$, as one can easily check from \cite[Definition 0.5]{DLS_Qvfr}. Hence, $u^+, u^-$ 
belong to $W^{1,2}(\Omega, \Iqs)$. Thus, the approximate differentiability a.e. of $\etab\circ u$, $u^+$ and $u^-$ follows from \cite[Corollary 2.7]{DLS_Qvfr}. 

The approximate differentiability of $v,w$ and the fact that they are identically $Q\a{0}$ on $\Omega_0$ implies easily that
indeed $|Dv|=|Dw| =0$ a.e. on $\Omega_0$. This shows, therefore, the third case of \eqref{e:id_differenziali}. We now come to the other two cases and, by symmetry, we focus on the first one. Clearly, on $\Omega_+\cup \Omega_0$ we have $|Dw|=0$ and thus by definition
\[
|Du|^2 = |Dv|^2 + Q |D(\etab\circ u)|^2\, .
\]
On the other hand, on $\Omega_+\cup \Omega_0$ we also have that $\etab\circ u = \etab\circ u^+$ and that
\[
v = \sum_i \a{u_i^+ - \etab\circ u^+} = u^+ \ominus \etab \circ u^+\, .
\]
Now, at every point of approximate differentiability $x$ we readily check from \cite[Definition 1.9 \& Definition 2.6]{DLS_Qvfr} that
$D (\etab\circ u^+) (x) = \frac{1}{Q} \sum_i Du_i^+ (x)$ and that $Dv_i (x) = Du_i^+ (x) - D (\etab \circ u^+) (x)$. 
Recalling \cite[Proposition 2.17]{DLS_Qvfr} we have thus
\[
|Du^+|^2 (x) = \sum_i |Du_i^+ (x)|^2 = \sum_i |Dv_i|^2 + Q |D (\etab \circ u^+) (x)|^2 = |Dv|^2 (x) + Q |D (\etab\circ u) (x)|^2\, .
\]
The latter identity completes the proof. 
\end{proof}

\section{Currents $\mod (2Q)$ and $\Iqspec$-valued maps} \label{sec:ispec_curr}

In this section we link the notion of special $Q$-valued maps to that of currents modulo $2Q$. This will not only be very useful in the proof of Theorem \ref{t:main_modp} given in \cite{DLHMS}, but it also highlights the intuition behind the definition of $\Iqspec$ as described in the introduction. Consider a $k$-dimensional
rectifiable set $E\subset \mathbb R^m$ with finite $\mathcal{H}^k$ measure and a proper Lipschitz map $u: E\to \Iqspec$ (i.e. $\etab \circ u$, $u^+$ and $u^-$ are proper, see \cite[Definition 1.2]{DLS_Currents} for the definition of proper $\Iqs$-valued maps). We can use Definition \ref{d:pieces}, Corollary
\ref{c:separate_Lip} and the theory presented in \cite{DLS_Currents} to define a suitable notion of ``graph'' of $u$ and correspondingly associate a rectifiable current to it.

\begin{definition}\label{d:graph}
Let $E\subset \mathbb R^m$ be countably $k$-rectifiable with finite $\mathcal{H}^k$ measure and let $u: E\to \Iqspec$ be Lipschitz and proper.
Using the terminology of \cite{DLS_Currents} we denote by
\begin{itemize}
\item[(i)] $\gr (u)$ the set 
\[
\gr (u) := (\gr (u^+)\cap (E_+\times \R^n))  \cup (\gr (u^-) \cap (E_-\times \R^n)) \cup (\gr (\etab\circ u)\cap (E_0\times \R^n))\, ;
\]
\item[(ii)] $\mathbf{G}_u$ the integer rectifiable $k$-dimensional current 
\[
\mathbf{G}_u := \mathbf{G}_{u^+}\res E_+\times \R^n - \mathbf{G}_{u^-} \res E_-\times \R^n + Q\,
\mathbf{G}_{\etab\circ u} \res E_0 \times \mathbb R^n\, .
\]
\end{itemize}
\end{definition}

\begin{remark}
Even though \cite{DLS_Currents} only defines multi-valued push-forwards and graphs over a Lipschitz $k$-dimensional submanifold, the theory can be easily extended to treat the case when the domain of the map is a countably $k$-rectifiable set; see \cite{SS17a} for details.

It is also not difficult to see that, if $E$ is closed, then $\spt (\mathbf{G}_u) \subset \gr (u)$. In fact, under some additional assumptions, for instance
when $E$ is a compact Lipschitz submanifold, we can easily conclude that $\spt (\mathbf{G}_u) = \gr (u)$. 
\end{remark}

\begin{lemma}
Let $\Omega \subset \mathbb R^m$ be a bounded Lipschitz domain and $u: \overline{\Omega} \to \Iqspec$ a Lipschitz map. Then, for $p=2Q$, 
\begin{itemize}
\item[(i)] $\partial \mathbf{G}_u = \mathbf{G}_{u|_{\partial \Omega}}\; \modp$;
\item[(ii)] $\mathbf{G}_u$ is a representative $\modp$ (in fact, for every measurable $E\subset \Omega_0$, the current 
$(\mathbf{G}_u - 2Q \mathbf{G}_{\etab\circ u}) \res E\times \R^n$ is also a representative $\modp$). 
\end{itemize}
Moreover, there are positive geometric constants $c(m,n,Q)$ and $C(m,n,Q)$ such that, if $E\subset \Omega$ is Borel measurable and $\Lip (u) \leq c$, then
\begin{equation}\label{e:Taylor_exp}
\left|\|\mathbf{G}_u\| (E\times \mathbb R^n) - Q |E| - {\textstyle{\frac{1}{2}}}\Dir (u, E)\right| \leq C \int_E |Du|^4\, .
\end{equation}
\end{lemma}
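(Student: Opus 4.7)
My approach to (i) is to bypass a direct analysis of the possibly very rough interior interfaces $\partial\Omega_\pm\cap \Omega$ by working modulo $2Q$ with classical Lipschitz $Q$-valued extensions. Define $U^+ := u^+$ on $\Omega_+\cup \Omega_0$ and $U^+ := Q\a{\etab\circ u}$ on $\Omega_-$, and symmetrically for $U^-$. Since $u^\pm = Q\a{\etab\circ u}$ on $\Omega_0$ by Definition \ref{d:pieces} and $u^\pm$ are Lipschitz on $\Omega$ by Corollary \ref{c:separate_Lip}, the maps $U^\pm$ are classical Lipschitz $\Iqs$-valued maps on all of $\Omega$, so the integer rectifiable currents $\mathbf{G}_{U^\pm}$ satisfy the standard identity $\partial \mathbf{G}_{U^\pm} = \mathbf{G}_{U^\pm|_{\partial\Omega}}$. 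A direct piece-by-piece check on $\Omega_+$, $\Omega_-$, $\Omega_0$ yields the integer-current identity
\[
\mathbf{G}_u = \mathbf{G}_{U^+} - \mathbf{G}_{U^-} + Q\,\mathbf{G}_{\etab\circ u} - 2Q\,\mathbf{G}_{\etab\circ u}\res (\Omega_-\times\R^n);
\]
modulo $p=2Q$ the last term drops out, and taking the boundary of the remaining three terms gives $\partial \mathbf{G}_u \equiv \mathbf{G}_{U^+|_{\partial\Omega}} - \mathbf{G}_{U^-|_{\partial\Omega}} + Q\,\mathbf{G}_{\etab\circ u|_{\partial\Omega}}$ mod $p$. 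An analogous decomposition of $\partial\Omega$ into $F_\pm := \partial\Omega\cap \Omega_\pm$ and $F_0 := \partial\Omega\cap\Omega_0$, together with the same mod-$2Q$ cancellation applied to the trace (an extra $2Q\,\mathbf{G}_{\etab\circ u}\res F_-$ appears and vanishes), identifies the right-hand side with $\mathbf{G}_{u|_{\partial\Omega}}$, proving (i).

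The proof of (ii) is a direct multiplicity count together with the standard characterization that a rectifiable integer current is a representative mod $p$ iff its multiplicity takes values in $\{-\lfloor p/2\rfloor,\ldots,\lfloor p/2\rfloor\}$ a.e. (equivalently, its classical mass coincides with its mod-$p$ mass). The three summands defining $\mathbf{G}_u$ project onto pairwise disjoint regions of $\Omega$ and carry multiplicities in $\{1,\ldots,Q\}$ on $\Omega_+$ (from $\mathbf{G}_{u^+}$), in $\{-Q,\ldots,-1\}$ on $\Omega_-$ (from $-\mathbf{G}_{u^-}$) and constantly equal to $Q$ on $\Omega_0$, all within the admissible range $[-Q,Q]=[-p/2,p/2]$. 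Subtracting $2Q\,\mathbf{G}_{\etab\circ u}\res (E\times\R^n)$ for any Borel $E\subset \Omega_0$ switches the multiplicity on $E$ from $+Q$ to $-Q$, which is still admissible.

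For the Taylor expansion the disjoint projections make the mass additive:
\[
\|\mathbf{G}_u\|(E\times\R^n) = \|\mathbf{G}_{u^+}\|((E\cap\Omega_+)\times\R^n) + \|\mathbf{G}_{u^-}\|((E\cap\Omega_-)\times\R^n) + Q\,\|\mathbf{G}_{\etab\circ u}\|((E\cap\Omega_0)\times\R^n).
\]
To each summand I would apply the classical graph-area Taylor expansion for Lipschitz (classical $Q$-valued or single-valued) maps with small Lipschitz constant, as proved in \cite{DLS_Currents}, obtaining $Q|\,\cdot\,| + \tfrac12 \Dir + O(\int |D(\cdot)|^4)$ on each piece. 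Proposition \ref{p:app_differentiability} combines the three separate Dirichlet terms into $\tfrac12\Dir(u,E)$ and, by the same pointwise identities, combines the three error terms into $C\int_E |Du|^4$. The main obstacle is in (i): the sets $\Omega_\pm, \Omega_0$ are in general not regular enough for a direct interior-boundary analysis of $\mathbf{G}_u$, and the extension trick above, together with the fact that the extraneous interior contributions are multiples of $2Q$ and thus invisible mod $p$, is precisely what is needed to bypass this difficulty.
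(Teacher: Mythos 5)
Your proposal is correct and takes essentially the same route as the paper's proof. The one redundancy worth noting: the maps $U^\pm$ you define are in fact identical to $u^\pm$ on all of $\Omega$, since by Definition \ref{d:pieces} one has $u^+(x) = Q\a{\etab\circ u(x)}$ automatically whenever $x\in \Omega_-\cup\Omega_0$ (and symmetrically for $u^-$), so the ``extension trick'' is already built into the canonical decomposition; the paper therefore works directly with $u^\pm$, writes the same integer-current identity $\mathbf{G}_u = \mathbf{G}_{u^+} - \mathbf{G}_{u^-} + Q\mathbf{G}_{\etab\circ u} - 2Q\mathbf{G}_{\etab\circ u}\res\Omega_-\times\R^n$, passes to the boundary mod $2Q$, and then reconstitutes $\mathbf{G}_{u|_{\partial\Omega}}$ via the same cancellation over the negative region of the trace, exactly as you do.
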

\begin{proof}
Recall that, by \cite{Federer69}, an integer rectifiable current $T$ is a representative $\modp$ if and only if its density is at most $\frac{p}{2}$ 
at $\|T\|$-a.e. point. Since this is obviously the case for the current $(\mathbf{G}_u - 2Q \mathbf{G}_{\etab\circ u} )\res E\times \R^n$ for every measurable subset $E\subset \Omega_0$, the second point is trivial. Observe that
\begin{align*}
\mathbf{G}_{u^+} &= \mathbf{G}_{u^+}\res  \Omega_+\times \R^n + Q\mathbf{G}_{\etab\circ u} \res (\Omega_0\cup \Omega_-) \times \R^n\\
\mathbf{G}_{u^-} &= \mathbf{G}_{u^-}\res \Omega_-\times \R^n + Q\mathbf{G}_{\etab\circ u} \res (\Omega_0\cup \Omega_+) \times \R^n\, .
\end{align*}
Therefore we conclude
\begin{align*}
\mathbf{G}_u &= \mathbf{G}_{u^+} - \mathbf{G}_{u^-} + Q \mathbf{G}_{\etab\circ u} - 2Q \mathbf{G}_{\etab\circ u} \res \Omega_-\times \R^n\, .
\end{align*}
In particular 
\[
\mathbf{G}_u =  \mathbf{G}_{u^+} - \mathbf{G}_{u^-} + Q \mathbf{G}_{\etab\circ u}\, \modp\, .
\]
Furthermore, by applying the boundary operator $\modp$ to the above equation we see that
\[
\partial \mathbf{G}_u = \partial \mathbf{G}_{u^+} - \partial \mathbf{G}_{u^-} + Q \partial \mathbf{G}_{\etab\circ u}\, \modp\, .
\]
We can now use the relation $\partial \mathbf{G}_f = \mathbf{G}_{f|_{\partial \Omega}}$ valid for single valued and multivalued Lipschitz graphs (cf. \cite{DLS_Currents}) to conclude
\[
\partial \mathbf{G}_u =  \mathbf{G}_{u^+|_{\partial \Omega}} - \mathbf{G}_{u^-|_{\partial \Omega}} + Q \mathbf{G}_{\etab\circ u|_{\partial \Omega}}\, \modp\, .
\]
Now, using the same argument above we get as well
\[
 \mathbf{G}_{u^+|_{\partial \Omega}} - \mathbf{G}_{u^-|_{\partial \Omega}} + Q \mathbf{G}_{\etab\circ u|_{\partial \Omega}}
- 2Q \mathbf{G}_{\etab\circ u|_{\partial \Omega}} \res (\partial \Omega)_- \times \R^n = \mathbf{G}_{u|_{\partial \Omega}}\, ,
\]
hence concluding the proof of the first point.

We now come to \eqref{e:Taylor_exp}.
First of all, by the obvious additivity in the set $E$ of the various quantities involved in the inequality, it suffices to show it for
subsets $E$ of, respectively, $\Omega_+$, $\Omega_-$ and $\Omega_0$. For subsets of $\Omega_0$ the inequality is the standard Taylor expansion of the area functional for
Lipschitz graphs. 
Next, recall that, by \cite[Corollary 3.3]{DLS_Currents}, the inequality in \eqref{e:Taylor_exp} holds for $\mathbf{G}_{u^+}$ and $\mathbf{G}_{u^-}$ 
(in fact, note that \cite[Corollary 3.3]{DLS_Currents} is stated for Lipschitz open domains $E$, rather than for Borel sets $E$; however, since for any Borel set we can find a sequence $E_k\supset E$ of Lipschitz open domains with $|E_k\setminus E|\to 0$, it is straightforward to infer the validity of
\cite[Corollary 3.3]{DLS_Currents} for a general Borel $E$). If we take $E\subset \Omega_+$, from \cite[Corollary 3.3]{DLS_Currents} and Proposition
\ref{p:app_differentiability} we then immediately conclude
\begin{align*}
 \left|\|\mathbf{G}_u\| (E\times \mathbb R^n) - Q |E| - {\textstyle{\frac{1}{2}}}\Dir (u, E)\right|
 & = \left|\|\mathbf{G}_{u^+}\| (E\times \mathbb R^n) - Q |E| - {\textstyle{\frac{1}{2}}}\Dir (u^+, E)\right|\\
& \leq C \int_E |Du^+|^4 =  C \int_E |Du|^4\, .
\end{align*}
The case $E\subset \Omega_-$ can be proved in a similar fashion since 
\[
\|- \mathbf{G}_{u^-}\| ( E \times \mathbb R^n) = \|\mathbf{G}_{u^-}\| (E\times \mathbb R^n)\, . \qedhere
\]
\end{proof}

\section{BiLipschitz embeddings and retractions, Lipschitz extensions}

In this section we show that, as it is the case for $\Iqs$, there are a suitable biLipschitz embedding of $\Iqspec$ into a sufficiently large Euclidean space and a corresponding retraction map of the ambient onto the embedding. 

\begin{theorem}\label{thm:embedding_and_retraction}
For every $Q$ and $n$ there is $\bar N (n,Q)$ and constants $C(n,Q), \delta_0 (n, Q) >0 $ with the following properties. 
\begin{itemize}
\item[(i)] There is an injective map $\zetab: \Iqspec \to \mathbb R^{\bar N}$ such that
\begin{itemize}
\item[(a)] $\Lip (\zetab), \Lip (\zetab^{-1}) \leq C$, where $\zetab^{-1}$ denotes the inverse of $\zetab$ on $\mathpzc{Q} := \zetab (\Iqspec)$;
\item[(b)] $\Dir (u, M) = \int_{M} |D (\zetab \circ u)|^2$ for every Lipschitz submanifold $M$ of any Euclidean space and for
every $u\in W^{1,2} (M, \Iqspec)$;
\item[(c)] $|\zetab (P)| = |P|$ for every $P\in \Iqspec$. 
\end{itemize}
\item[(ii)] There is a map $\varrhob: \mathbb R^{\bar N} \to \mathpzc{Q}$ with $\Lip (\varrhob)\leq C$ and $\varrhob (x) = x$
for every $x\in \mathpzc{Q}$.\
\item[(iii)] For every positive $\delta < \delta_0$ there is a map $\varrhob^\star_\delta: \mathbb R^{\bar N} \to \mathpzc{Q}$ such that
$|\varrhob_\delta^\star (P)-P|\leq C \delta^{8^{-nQ}}$ for every $P\in \mathpzc{Q}$ and such that the following estimate holds for
every $u\in W^{1,2} (M, \mathbb R^{\bar N})$:
\begin{equation}\label{e:est_rho_star}
\int_M |D (\varrhob^\star_\delta \circ u)|^2 \leq \left(1 +C \delta^{8^{-nQ-1}}\right) \int_{\{\dist (u, \mathpzc{Q}) \leq \delta^{nQ+1}\}}
|Du|^2 + C \int_{\{\dist (u, \mathpzc{Q}) > \delta^{nQ+1}\}} |Du|^2\, .
\end{equation}
\end{itemize}
\end{theorem}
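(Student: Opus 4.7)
My plan is to leverage the isometric identification in Proposition \ref{prop.isometry} to reduce the construction to two parallel applications of the classical Almgren--De~Lellis--Spadaro machinery for $\Iqs$, plus a "wedge retraction" that collapses one of the two factors of $\IQSn$.

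\smallskip

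\emph{Embedding and proof of (i).} Let $\bphi : \Iqs \to \R^N$ be the biLipschitz, norm-preserving, Dirichlet-preserving embedding of \cite{DLS_Qvfr} and let $\bphi_0 := \bphi|_{\Iqsn}$, which inherits the same three properties on the zero-mean subset. I set $\bar N := 2N + n$ and define
\[
\zetab(P) := (\bphi_0(R), \bphi_0(S), \sqrt{Q}\, z) \in \R^N \times \R^N \times \R^n, \qquad \iso(P) = (R,S,z).
\]
Property (c) follows from $|\bphi_0(T)|=|T|$ and the splitting \eqref{eq:norms and averages1}, noting that one of $R,S$ equals $Q\a{0}$. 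Property (a) follows analogously from \eqref{eq:norms and averages2} applied to both sides of $\iso$, combined with the bi-Lipschitz character of $\bphi_0$ on each factor; the $\sqrt{Q}$ scaling in the last slot matches the metric $d$ of Proposition \ref{prop.isometry}. For property (b), Definition \ref{d:Sobolev} gives $\Dir(u,M) = \Dir(v,M)+\Dir(w,M)+Q\,\Dir(z,M)$, while the Dirichlet-preserving property of $\bphi_0$ on each factor yields $\int_M|D(\bphi_0\circ v)|^2 = \Dir(v,M)$ and similarly for $w$, so summing gives the claim.

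\smallskip

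\emph{Lipschitz retraction (ii).} The image $\mathpzc{Q}$ has the structure of a wedge: in the first two slots it is the union $(\bphi_0(\Iqsn)\times\{0\})\cup(\{0\}\times\bphi_0(\Iqsn))$, times $\R^n$ in the third. I build $\varrhob$ in two stages. First, on each of the first two slots apply the classical Almgren retraction $\rhob:\R^N\to\bphi(\Iqs)$ followed by subtraction of the mean, producing a Lipschitz retraction $\rhob_0:\R^N\to\bphi_0(\Iqsn)$. This yields $(a',b',c)\in\bphi_0(\Iqsn)\times\bphi_0(\Iqsn)\times\R^n$. Second, collapse to the wedge via
\[
W(a',b') := \begin{cases} \bigl((1 - |b'|/|a'|)_+\,a',\ 0\bigr) & \text{if } |a'|\ge |b'|, \\ \bigl(0,\ (1 - |a'|/|b'|)_+\,b'\bigr) & \text{if } |b'|\ge |a'|, \end{cases}
\]
setting $W(0,0)=(0,0)$. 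Continuity across $|a'|=|b'|$ is immediate since both formulas vanish there; a direct computation of the Jacobian on each half-space (eigenvalues in $[0,1]$ for the $a'$-derivative, magnitude $\le 1$ for the $b'$-derivative) shows $W$ is globally Lipschitz. Finally, since $\bphi_0(\Iqsn)$ need not be closed under scaling by $[0,1]$, I compose once more with $\rhob_0$ in the nonzero slot to land back in $\mathpzc{Q}$; the resulting map is the identity on the wedge.

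\smallskip

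\emph{Improved retraction (iii).} I adapt the construction of $\rhob^\star_\delta$ from \cite{DLS_Lp} in parallel on each of the first two slots, obtaining Lipschitz maps with the almost-isometric Dirichlet estimate and with $|\rhob^\star_\delta(a)-a|\le C\delta^{8^{-nQ}}$ for $a\in\bphi_0(\Iqsn)$. I then apply the wedge collapse $W$ above. The main obstacle is to verify that $W$ does not destroy the almost-isometry: this is where the quantitative closeness hypothesis $\dist(u,\mathpzc{Q})\le\delta^{nQ+1}$ is used. Pointwise, closeness to $\mathpzc{Q}$ forces one of $|a'|,|b'|$ to be at most $C\delta^{nQ+1}$, so in that region $W$ is, to leading order, the orthogonal projection killing the small slot, which satisfies $|DW|\le 1+C\delta^{nQ+1}$ on the relevant Jacobians; plugging this into the energy balance yields the $(1+C\delta^{8^{-nQ-1}})$-factor on the near set. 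On the complementary set, the mere Lipschitz bound on $W$ combined with the Lipschitz bound on the classical $\rhob^\star_\delta$ gives the $C\int\cdot|Du|^2$ term. The hardest quantitative step will be verifying that the wedge contribution can be absorbed into the same exponent $8^{-nQ-1}$ that governs the classical construction, which requires matching the Whitney-type decomposition scale used in the construction of $\rhob^\star_\delta$ with the threshold $\delta^{nQ+1}$ controlling the collapse.
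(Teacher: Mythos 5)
Your constructions in parts (i) and (ii) coincide with the paper's, up to cosmetic differences: the $\sqrt Q$-dilation in the last slot is the correct choice (the paper writes $\xib_{BW}\times\xib_{BW}\times{\rm id}$ but the Dirichlet and norm identities require exactly the factor you insert), and the final composition with $\rhob_0$ in (ii) is harmless but unnecessary, because the conicality identity $\xib_{BW}\bigl(\sum_i\a{\lambda P_i}\bigr)=\lambda\,\xib_{BW}\bigl(\sum_i\a{P_i}\bigr)$ shows that $\xib_{BW}(\Iqsn)$ is a cone and hence $W$ already lands in the wedge.

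The genuine gap is in part (iii). You claim that on the near set $\{\dist(u,\mathpzc{Q})\leq\delta^{nQ+1}\}$, where one of $|a'|,|b'|$ is $O(\delta^{nQ+1})$, the wedge collapse $W$ ``is, to leading order, the orthogonal projection killing the small slot'' and hence satisfies $|DW|\leq 1+C\delta^{nQ+1}$. This is false. Differentiating $W(a',b')=\bigl(a'-\tfrac{|b'|}{|a'|}a',\,0\bigr)$ in the $b'$-direction yields the rank-one matrix $-\tfrac{1}{|a'||b'|}\,a'\otimes b'$, whose operator norm equals $1$ regardless of how small $|b'|$ is. Combined with the $a'$-block, the operator norm of $DW$ is exactly $\sqrt2$ throughout $\{0<|b'|<|a'|\}$; concretely, with $|a|=|b|=1$ and $\eps$ small one checks
\[
\bigl|W(a,\eps b)-W((1+\eps)a,0)\bigr| = 2\eps,\qquad \bigl|(a,\eps b)-((1+\eps)a,0)\bigr|=\sqrt2\,\eps,
\]
so the Lipschitz constant of $W$ stays $\sqrt2$ arbitrarily close to the wedge. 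Plugging this into your energy balance can only give $\int_E|D(\varrhob^\star_\delta\circ u)|^2\leq 2\int_E|Du|^2$ on the near set, which does not yield the almost-isometric bound \eqref{e:est_rho_star}.

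The fix, which is the one new idea in the paper's proof of (iii), is to replace $W$ near the wedge with a map that is \emph{genuinely independent} of the small slot rather than merely close to a projection. One introduces a dead zone: with a cutoff $\chi_\delta$ vanishing on $[0,\delta]$, set $R_\delta(x,y):=\bigl(\chi_\delta(|x|)\tfrac{x}{|x|},\,0\bigr)$ on $\{|y|\leq\delta^2\}$ (and symmetrically on $\{|x|\leq\delta^2\}$). On $\{|y|\leq\delta^2\}$ the $y$-derivative of $R_\delta$ is \emph{exactly} zero, not merely $O(\delta)$, so its Lipschitz constant there is $1+C\delta$, which is what produces the $(1+C\delta^{8^{-nQ-1}})$ prefactor on the near set. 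The map $W$ you built is still used, but only to extend $R_\delta$ globally with a uniform Lipschitz bound for the error term on the far set. Without the dead-zone modification your argument does not close.
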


\begin{remark}\label{r:average_zero}
Observe that, in the proof given below, if we identify $\Iqspec$ with $\IQSn\times \mathbb R^n$, then:
\begin{itemize}
\item the map $\zetab$ takes the form $\zetab (P, v) = (\zetab_0 (P), v)$ for a suitable
$\zetab_0:\IQSn \to \mathbb R^{\bar N-n}$;
\item the map $\varrhob$ takes the form $(w,v) \mapsto (\varrhob_0 (w), v)$ for a $\varrhob_0 : \mathbb R^{\bar N-n} \to  \zetab_0 (\IQSn)$;
\item the map $\varrhob^\star_\delta$ takes the form $(w,v) \mapsto (\varrhob_{0, \delta}^\star (w), v)$ for a $\varrhob_{0, \delta}^\star : \mathbb R^{\bar N-n} \to  \zetab_0 (\IQSn)$. 
\end{itemize}
Clearly the maps $\zetab_0, \varrhob_0$ and $\varrhob_{0, \delta}^\star$ enjoy all the properties and estimates claimed in Theorem \ref{thm:embedding_and_retraction} with $\IQSn$ replacing $\Iqspec$ and $\mathbb R^{\bar N -n}$ replacing $\mathbb R^{\bar N}$. 
\end{remark}

\begin{proof}
In the whole proof we identify $\Iqspec$ with 
\[
((\Iqsn\times \{Q\a{0}\}) \cup (\{Q\a{0}\} \times \Iqsn))\times \mathbb R^n \subset \Iqs \times \Iqs \times \mathbb R^n\, .
\]

\medskip

{\bf{Proof of (i).}} Consider the restriction of the map $\xib_{BW}$ of
\cite[Corollary 2.2]{DLS_Qvfr} to $\Iqsn$, which takes values in $\mathbb R^N$ for some $N= N (Q,n)$, and denote by ${\rm id}: \mathbb R^n \to \mathbb R^n$ the identity map. We then see that (a) and (b) for $\zetab = \xib_{BW}\times \xib_{BW} \times {\rm id}$ follow directly from 
\cite[Corollary 2.2]{DLS_Qvfr} and the fact that $\mathcal{G}_s = \mathcal{G}\times \mathcal{G} \times d$ with $d$ as in Proposition \ref{prop.isometry}. For point (c) we need the
fact that $|\xib_{BW} (P)|=|P|$ for every $P\in \Iqs$: although this is not claimed in the statement of \cite[Corollary 2.2]{DLS_Qvfr}, it follows easily
from \cite[Eq. (2.1)]{DLS_Qvfr} and the fact that 
\begin{equation}\label{e:conical}
\xib_{BW} (\sum_i \a{\lambda P_i}) = \lambda \xib_{BW} (\sum_i \a{P_i})\, ,
\end{equation} 
which in turn is
an obvious outcome of the definition of $\xib_{BW}$ given in \cite[Section 2.1.3]{DLS_Qvfr}. 

\medskip

{\bf{Proof of (ii).}} We would like to define the map $\varrhob$ as $\rhob \times \rhob \times {\rm id}$, where $\rhob$ is the map of \cite[Theorem 2.1]{DLS_Qvfr}. Note that the $\xib$ of \cite[Theorem 2.1]{DLS_Qvfr} can be taken to be $\xib_{BW}$, as it is obvious from the discussion in \cite[Section 2.1]{DLS_Qvfr}). In order to simplify the notation, from now on we drop the subscript $_{BW}$.

The first issue is that $\rhob$ is a retraction of $\mathbb R^N$ onto $\mathcal{Q} = \xib (\Iqs)$ rather than onto $\xib (\Iqsn)$. 
In order to deal with it, take $r: \Iqs \to \Iqsn$ as $r(P) := P \ominus \etab (P)$ and substitute $\rhob$ with $\rhob' := \xib \circ r \circ \xib^{-1} \circ \rhob$. The second issue is that $\rhob'\times \rhob'$ is a retraction of $\mathbb R^N\times \mathbb R^N$ onto $\xib (\Iqsn)\times \xib (\Iqsn)$, so that our next goal is to find a retraction of $\xib (\Iqsn) \times \xib (\Iqsn)$ onto ${\xib (\Iqsn)\times \{0\} \cup \{0\} \times \xib (\Iqsn)}$. 
We first define $R : \mathbb R^N \times \mathbb R^N \to \R^N\times \R^N$ as
\[
R (x,y) := 
\begin{cases}
\left(x - \frac{|y|}{|x|} x , 0\right) \quad &\mbox{if $|x|> |y|$}\\
\left(0, y - \frac{|x|}{|y|} y\right) \quad &\mbox{if $|y|> |x|$}\\
(0,0) &\mbox{if $|y|=|x|$.}
\end{cases}
\]
Clearly $R$ maps $\R^N\times \R^N$ onto $\R^N\times \{0\} \cup \{0\}\times \mathbb R^N$ and it is the identity on $\R^N\times \{0\} \cup \{0\}\times \mathbb R^N$. It can be checked in an elementary way that $R$ is Lipschitz. A quick method to see it is the following. First observe that $R$ is obviously locally Lipschitz on $(\mathbb R^N \times \mathbb R^N)\setminus \{(0,0)\}$. By Rademacher's theorem we can compute its differential, which we can do separately on the two relevant open regions $\{|x| >|y|\}$ and $\{|y| > |x|\}$. On the first region the differential is 
\[
DR = \left(\begin{array}{ll}
A & B\\
0 & 0
\end{array}\right)
\] 
where
\begin{align*}
A &= \left(1-\frac{|y|}{|x|}\right) {\rm Id} + \frac{|y|}{|x|^3} x\otimes x\, \\
B &= -\frac{1}{|	x||y|} x \otimes y\, .
\end{align*}
Using the fact that $|y|<|x|$, we easily estimate the operator norm of the differential by $\||DR\|_o \leq \sqrt{2}$. Similarly in the region $\{|y|> |x|\}$. 
We have just concluded that the map $R$ is locally Lipschitz with constant $\sqrt{2}$ on the open set $\{|y|\neq |x|\}$. Since it is continuous and it is constant on the closed set $\{|y|=|x|\}$, it is elementary to see that it is globally Lipschitz with constant $\sqrt{2}$. 

Now, observe that, by \eqref{e:conical}, $R$ maps $\xib (\Iqsn) \times \xib (\Iqsn)$ into $\xib (\Iqsn) \times \xib (\Iqsn)$, and hence into
\[
\begin{split}
(\xib (\Iqsn) \times \xib (\Iqsn)) & \cap (\R^N\times \{0\} \cup  \{0\}\times \R^N) \\ 
&= \xib (\Iqsn)\times \{0\} \cup \{0\} \times \xib (\Iqsn)\, .
\end{split}
\]
We can thus finally define our map $\varrhob$ as $\varrhob = (R \circ (\rhob'\times \rhob'))\times {\rm id}$.  

\medskip

{\bf{Proof of (iii).}} We first consider the map $\rhob_\delta^\star$ of \cite[Proposition 7.2]{DLS_Lp}. As above, a first candidate for the map\
$\varrhob_\delta^\star$ would be $\rhob_\delta^\star \times \rhob_\delta^\star \times {\rm id}$. Again we start replacing $\rhob_\delta^\star$ with $\rhob'_\delta := \xib \circ r \circ \xib^{-1} \circ \rhob_\delta^\star$. Fix $P\in \xib(\Iqsn)$. Recall that, by \cite[Proposition 7.2]{DLS_Lp}, $|\rhob^\star_\delta (P)-P|\leq C \delta^{8^{-nQ}}$. Next, 
\begin{align*}
|\rhob'_\delta (P) - P| & \leq C \mathcal{G} (r (\xib^{-1} (\rhob^\star_\delta (P)) , \xib^{-1} (P))\\
&\leq C \left(\mathcal{G} (\xib^{-1} (\rhob^\star_\delta (P)), \xib^{-1} (P)) + \sqrt{Q} |\etab (\xib^{-1} (\rhob^\star_\delta (P))|\right)\\
&=  C \left(\mathcal{G} (\xib^{-1} (\rhob^\star_\delta (P)), \xib^{-1} (P)) + \sqrt{Q} |\etab (\xib^{-1} (\rhob^\star_\delta (P))- \etab (\xib^{-1} (P))|\right)\\
&\leq 2 C \mathcal{G} (\xib^{-1} (\rhob^\star_\delta (P)), \xib^{-1} (P))\leq 2 C^2 |\rhob^\star_\delta (P) - P|\, .
\end{align*}
We thus conclude the estimate
\begin{equation}\label{e:intermedia}
|\rhob'_\delta (P) - P| \leq C \delta^{8^{-nQ}} \qquad \forall P\in \xib (\Iqsn)\, .
\end{equation}
Furthermore, recall the elementary observation that $\Dir (f\ominus (\etab\circ f)) \leq \Dir (f)$, valid for every $f\in W^{1,2} (\Omega, \Iqs)$. In particular, combining it with \cite[Proposition 7.2]{DLS_Lp} and with part (i) of the theorem, we achieve
\begin{align}
\int_M |D (\rhob'_\delta \circ f)|^2 &\leq \left(1 +C \delta^{8^{-nQ-1}}\right) \int_{\{\dist (f, \xib (\Iqsn)) \leq \delta^{nQ+1}\}}
|Df|^2\nonumber\\ 
&\qquad + C \int_{\{\dist (f, \xib (\Iqsn)) > \delta^{nQ+1}\}} |Df|^2\label{e:intermedia2}
\end{align}
for every $f\in W^{1,2} (M, \R^n)$. 

Our map $\varrhob^\star_\delta$ will be defined as $(R_\delta \circ(\rhob'_\delta \times \rhob'_\delta))\times {\rm id}$, where
\[
R_\delta: \mathbb R^N\times \mathbb R^N\to (\mathbb R^N\times \{0\}\cup \{0\}\times \mathbb R^N)
\]
is an appropriate ``almost retraction'' map which we will construct as follows. First introduce the function $\chi_\delta :[0, \infty[ \to [0, \infty[$ as 
\[
\chi_\delta (s) = \begin{cases}
0 \qquad &\mbox{if $s\in [0, \delta]$}\\
(1-\delta)^{-1} (s-\delta) &\mbox{if $s\in [\delta, 1]$}\\
1 &\mbox{otherwise.}
\end{cases} 
\]
We then define 
\[
R_\delta (x,y) = \begin{cases} 
\left(\chi_\delta (|x|)\frac{x}{|x|}, 0\right)\qquad &\mbox{if $|y|\leq \delta^2$}\\
\left(0, \chi_\delta (|y|)\frac{y}{|y|}\right)\qquad &\mbox{if $|x|\leq \delta^2$.}
\end{cases}
\] 
It is easy to see that $R_\delta$ is well defined, since on the intersection $\{\max\{|y|, |x|\} \geq \delta^2\}$ the map is
identically $0$. Moreover:
\begin{itemize}
\item the restriction of $R_\delta$ to
$\{|y|\leq \delta^2\}$ takes values into $\mathbb R^N\times \{0\}$ and has Lipschitz constant bounded by $1+C\delta$;
\item the restriction of $R_\delta$ to
$\{|x|\leq \delta^2\}$ takes values into $\{0\}\times \mathbb R^N$ and has also Lipschitz constant bounded by $1+C\delta$.
\end{itemize} 
Its global Lipschitz constant is controlled independently of $\delta$ and, finally, we can extend it to the whole $\R^N\times \R^N$ by first choosing a Lipschitz extension taking values in $\R^N\times \R^N$ and then composing it with the retraction map $R$ of the proof of (ii)

We will now show that $\varrhob^\star_\delta := (R_\delta \circ(\rhob '_\delta \times \rhob'_\delta))\times {\rm id}$ has the desired properties. First observe that, if a point $P = (p,q,v)\in \R^{2N+1}$ belongs to $\mathpzc{Q}$, then either $p=0$ or $q=0$. Without loss of generality, assume that the second alternative holds. Then $R_\delta (p,0) = (p', 0)$ with $|p-p'|\leq C\delta$ and moreover $p'$ is a positive multiple of $p$,
which by \eqref{e:conical} implies that $p'\in \xib (\Iqsn)$. We therefore find that 
\begin{align*}
|\varrhob^\star_\delta (P) - P| & = |\rhob'_\delta (p') - p| \leq  |\rhob'_\delta (p') - p'| + |p'-p| \leq C \delta^{8^{-nQ-1}} + C \delta\, . 
\end{align*}
We next come to \eqref{e:est_rho_star}. Without loss of generality observe that we can prove the estimate for a generic Lipschitz map $u= (v,w,z)$ on a bounded domain. Consider next the set $E:= \{\dist (u, \mathpzc{Q})\leq \delta^{nQ+1}\}$. Let $u = (v,w,z)$ and let 
$(v',w') = R_\delta (v,w)$. If $z\in E$, we then have two cases
\begin{itemize}
\item $w' (z) =0$ and $\dist (v' (z), \xib (\Iqsn)) \leq \dist (u(z),  \mathpzc{Q})\leq \delta^{nQ+1}$;
\item $v'(z) =0$ and $\dist (w' (z), \xib (\Iqsn)) \leq \dist (u(z),  \mathpzc{Q})\leq \delta^{nQ+1}$.
\end{itemize}
In the first case we have $\varrhob^\star_\delta \circ u (x) = (\rhob'_\delta (v'(x)), 0, z(x))$, whereas in the second case we have
$\varrhob^\star_\delta \circ u (x) = (0, \rhob'_\delta (w'(x)), z(x))$. Using \eqref{e:intermedia2} we then can easily estimate
\begin{equation}
\begin{split}
\int_M |D (\varrhob^\star_\delta \circ u)|^2 \leq \left(1+C \delta^{8^{-nQ-1}}\right) &\int_E |D (R_\delta \circ (v,w))|^2 \\
&+ C \int_{M\setminus E} |D (R_\delta \circ (v,w))|^2  + \int_M |Dz|^2\, .
\end{split}
\end{equation}
Observe also that $(v,w) (E)$ is contained in $\{(x,y) : \min\{|x|, |y|\} \leq \delta^{nQ+1}\leq \delta^2\}$. On this set we easily compute $|DR_\delta|\leq 1 +C \delta$. Moreover recall that $\|DR_\delta\|_\infty\leq C$ for some constant $C$ independent of $\delta$. Thus we can write 
\begin{equation}
\begin{split}
\int_M |D (\varrhob^\star_\delta \circ u)|^2 \leq \left(1+C \delta^{8^{-nQ-1}}\right) &\int_E (|Dv|^2 + |Dw|^2) 
\\
&+ C \int_{M\setminus E} (|Dv|^2 + |Dw|^2) + \int_M |Dz|^2\, .
\end{split}
\end{equation}
Considering that $|Du|^2 = |Dv|^2 + |Dw|^2 + |Dz|^2$, we then conclude the desired estimate \eqref{e:est_rho_star}. 
\end{proof}

We conclude this section by remarking that a simple corollary of the parts (i) and (ii) of the above theorem is the following analogue of 
\cite[Theorem 1.7]{DLS_Qvfr}, recorded as Corollary \ref{cor.Lipschtiz extension for Iqs} here below. In turn using the corollary, a simple inspection of the proof of the Lipschitz approximation theorem in \cite[Proposition 2.5]{DLS_Qvfr} shows that the same result is valid for Sobolev maps with values in $\Iqspec$.

\begin{corollary}\label{cor.Lipschtiz extension for Iqs}
Let $B \subset \R^m$ and $f: B \to \Iqspec$ be Lipschitz. Then there exists an extension $\bar{f}: \R^m \to \Iqspec$ of $f$, with $\Lip(\bar{f}) \le C(m,Q)\, \Lip(f)$. Moreover, if $f$ is bounded then
\begin{equation}\label{e:sharp_infty}
\sup_{x \in \R^m} \abs{\bar{f} (x)} \le \sup_{x \in B} \abs{f (x)}\,, 
\end{equation}
and for any $q \in \R^n$ it holds
\begin{equation} \label{e:ext_osc1}
\sup_{x \in \R^m}\cG_s(\bar f(x), Q \a{q}) \leq C(m,Q)\, \sup_{x \in B} \cG_s(f(x), Q \a{q})\,.
\end{equation}
\end{corollary}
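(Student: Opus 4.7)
The plan is to reduce the problem to the classical Kirszbraun extension theorem in Euclidean space, exploiting the biLipschitz embedding $\zetab \colon \Iqspec \to \R^{\bar N}$ and the Lipschitz retraction $\varrhob \colon \R^{\bar N} \to \mathpzc{Q} = \zetab(\Iqspec)$ furnished by parts (i) and (ii) of Theorem \ref{thm:embedding_and_retraction}. Concretely, I would apply Kirszbraun's theorem to the Lipschitz map $\zetab \circ f \colon B \to \R^{\bar N}$ to obtain an extension $g \colon \R^m \to \R^{\bar N}$ with the same Lipschitz constant, and set $\bar f_0 := \zetab^{-1} \circ \varrhob \circ g$. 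This yields a Lipschitz extension of $f$ with $\Lip(\bar f_0) \leq \Lip(\zetab^{-1})\,\Lip(\varrhob)\,\Lip(\zetab)\,\Lip(f) \leq C(m,Q)\,\Lip(f)$, establishing the first conclusion of the corollary.

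To achieve the sharp norm bound \eqref{e:sharp_infty}, I would post-compose $\bar f_0$ with a Lipschitz ``radial truncation'' $\tau_M \colon \Iqspec \to \{P : \abs{P} \leq M\}$, where $M := \sup_B \abs{f}$, defined via the natural scaling action $t \cdot (S, \alpha) := (tS, \alpha)$ by $\tau_M(P) := P$ if $\abs{P} \leq M$ and $\tau_M(P) := (M/\abs{P})\cdot P$ otherwise. The conicality property \eqref{e:conical} of $\xib_{BW}$, together with the explicit form of $\zetab$ constructed in the proof of Theorem \ref{thm:embedding_and_retraction}, ensures that $\tau_M$ corresponds under $\zetab$ to the Euclidean radial projection onto the closed ball $\overline{B_M(0)} \subset \R^{\bar N}$ restricted to $\mathpzc{Q}$, hence is Lipschitz with a constant depending only on $n$ and $Q$. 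Since $\abs{f(x_0)} \leq M$ for every $x_0 \in B$, the map $\tau_M$ is the identity on $\bar f_0(B)$, so $\bar f := \tau_M \circ \bar f_0$ remains an extension of $f$, retains the Lipschitz bound, and satisfies $\abs{\bar f(x)} \leq M$ pointwise.

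For \eqref{e:ext_osc1}, the key observation is that, under the identification of Proposition \ref{prop.isometry}, the operation $P \mapsto P \ominus q$ is an isometry of $(\Iqspec, \G_s)$ satisfying $\abs{P \ominus q} = \G_s(P, Q\a{q})$. In order to realize a single extension $\bar f$ satisfying both \eqref{e:sharp_infty} and \eqref{e:ext_osc1} for every $q \in \R^n$, I would interpose, between the Kirszbraun step and the retraction $\varrhob$, the $1$-Lipschitz nearest-point projection onto the closed convex hull $K$ of $\zetab(f(B)) \subset \R^{\bar N}$. Since $K$ is convex, $\abs{y - z} \leq \sup_{x_0 \in B} \abs{\zetab(f(x_0)) - z}$ for every $y \in K$ and every $z \in \R^{\bar N}$; taking $z = \zetab(Q\a{q})$ and using the biLipschitz property of $\zetab$ yields $\G_s(\bar f_0(x), Q\a{q}) \leq C(m,Q) \sup_{x_0 \in B} \G_s(f(x_0), Q\a{q})$, and a short triangle inequality argument transfers this bound to $\bar f = \tau_M \circ \bar f_0$, handling the case $\sqrt{Q}\,\abs{q} > M$ via the reverse triangle inequality applied at a point of $B$ realizing $M$. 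The principal technical point throughout is the verification that $\tau_M$ is Lipschitz with a universal constant, which hinges on the conicality \eqref{e:conical} of the Brakke-White embedding.
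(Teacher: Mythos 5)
Your proof of the Lipschitz extension itself and of the sup-norm bound \eqref{e:sharp_infty} agrees with the paper's argument (Kirszbraun on $\zetab\circ f$, then $\zetab^{-1}\circ\varrhob$, then the $1$-Lipschitz radial truncation $\tau_M$, the latter made transparent by conicality). But for the oscillation bound \eqref{e:ext_osc1} you take a genuinely different route. The paper uses an ``elevator'' device: it lifts the domain to $\R^{m+1}$, sets the map identically equal to $Q\a{q_0}$ (with $q_0$ the optimal center realizing the oscillation) on a parallel copy of $\R^m$ placed at height $R/L$, extends from this enlarged domain, restricts back to height $0$, and finally establishes \eqref{e:ext_osc1} via a case split in $\delta_q := \cG_s(Q\a{q},Q\a{q_0})$. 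You instead interpose the nearest-point projection $P_K$ onto the closed convex hull $K$ of $\zetab(f(B))$ between the Kirszbraun step and the retraction $\varrhob$. Since $y\mapsto|y-\zetab(Q\a{q})|$ is convex and therefore bounded on $K$ by its supremum over the generating set $\zetab(f(B))$, and since $\varrhob$ fixes $\zetab(Q\a{q})\in\mathpzc{Q}$, the oscillation estimate for $\bar f_0$ falls out directly and uniformly in $q$ with no case distinction. Your transfer through $\tau_M$ also closes: when $|\bar f_0(x)|>M$ one has $\cG_s(\tau_M\bar f_0(x),\bar f_0(x))=|\bar f_0(x)|-M$, and the chain $|\bar f_0(x)|\le CR+\sqrt Q|q|$ together with $M\ge\sqrt Q|q|-R$ (reverse triangle inequality at an arbitrary point of $B$, so no need for a point realizing $M$) gives $|\bar f_0(x)|-M\le(C+1)R$; in fact this works regardless of the sign of $\sqrt Q|q|-M$, so even the single case split you mention is unnecessary. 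The convex-hull route is cleaner and avoids extending the domain by a dimension; the elevator trick, by contrast, uses only a Lipschitz extension operator and not the ambient linear structure, so it is the more robust device when a convex-hull argument is not available.
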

\begin{proof}
In order to get the Lipschitz extension it suffices to first extend $\zetab\circ f$ and then compose the extension with $\zetab^{-1}\circ \varrhob$. 
Next, assume that $M := \sup_{x \in B} \abs{f (x)} < \infty$. Observe that $\Iqspec$ is a cone, namely for every $\lambda \in [0, \infty[$ we can define $\lambda (T, \epsilon) = (\sum_i \a{\lambda T_i}, \epsilon)$. We therefore introduce the projection of $\Iqspec$ onto $\{S\in \Iqspec : |S|\leq M\}$ by keeping $S$ fixed if $|S|\leq M$ and mapping it to $\frac{M}{|S|} S$ if $|S|> M$. Such projection is $1$-Lipschitz and it suffices to compose it with any Lipschitz extension of $f$ to obtain a new extension with no larger Lipschitz constant and which satisfies the bound \eqref{e:sharp_infty}. Next, we prove \eqref{e:ext_osc1}. First, let us define the \emph{oscillation} of $f$ by
\begin{equation} \label{e:oscillation}
{\rm osc}(f) := \inf_{q \in \R^n} \sup_{x \in B} \cG_s(f(x), Q\a{q})\,, 
\end{equation}
and observe that since $f$ is bounded the infimum in \eqref{e:oscillation} is achieved. Let us then call $q_0 \in \R^n$ a value which realizes the oscillation, so that
\[
R := {\rm osc}(f) = \sup_{x \in B} \cG_s(f(x), Q\a{q_0})\,.
\]
Of course, if $R = 0$ then $f$ is identically equal to $Q\a{q_0}$ on $B$, and thus \eqref{e:ext_osc1} is trivially true for the natural extension $\bar f(x) = Q\a{q_0}$ for every $x \in \R^m$. Thus, we can assume $R >0$. We also set $L := \Lip(f)$. Then, we introduce the map
\begin{equation} \label{e:mappa fantasma}
 \tilde f \colon (B \times \{0\}) \cup (\R^m \times \{\frac{R}{L}\}) \subset \R^{m+1} \to \Iqspec
\end{equation}
which extends $f$, and which takes value $\tilde f(z) := Q\a{q_0}$ at every point $z = (x,R/L)$ with $x \in \R^m$. Since for any given $(x,0) \in B \times \{0\}$ and $z \in \R^m \times \{\frac{R}{L}\}$ we have
\[
\cG_s(\tilde f((x,0)), \tilde f(z)) = \cG_s(f(x),Q\a{q_0}) \leq R = L\, \left(\frac{R}{L} \right) \leq L \, \abs{(x,0) - z}\,,	
\]
it is clear that $\Lip(\tilde f) = \Lip(f) = L$. We can now use the argument in the first part of the proof to extend $\tilde{f}$ to a function $F \colon \R^{m+1} \to \Iqspec$, and then define $\bar f(x) := F((x,0))$ for all $x \in \R^m$. It is clear that $\bar f$ is an extension of $f$, and that both $\Lip(\bar f) \leq C(m,Q)\, \Lip(f)$ and \eqref{e:sharp_infty} hold. We claim that this extension $\bar f$ also satisfies \eqref{e:ext_osc1}. To this aim, let $q \in \R^n$, and set
\begin{equation} \label{ext_parameter}
\delta_q := \cG_s(Q\a{q}, Q\a{q_0}) = \sqrt{Q} \, \abs{q-q_0}\,.
\end{equation}
We shall distinguish two cases. Set $C = C(m,Q)$ the constant above, and assume first that 
\begin{equation} \label{first case}
\delta_q \leq (C+1) \, R\,.
\end{equation}
Then, for any $x \in \R^m$ it holds
\[
\cG_s(\bar f(x), Q\a{q}) \leq \delta_q + R \leq (C+2)\,R \leq (C+2)\, \sup_{x \in B} \cG_s(f(x),Q\a{q})\,,
\]
where in the last inequality we have used the definition of $R$. This proves the validity of \eqref{e:ext_osc1} when \eqref{first case} holds. Let us then suppose that \eqref{first case} fails, so that
\begin{equation} \label{second case}
(C+1)\,R < \delta_q.
\end{equation}
By triangle inequality we have, for every $x \in B$:
\begin{equation} \label{fatto1}
\cG_s(f(x), Q\a{q}) \geq \delta_q - R \geq \frac{C}{C+1} \, \delta_q \geq \frac{\delta_q}{2}\,.
\end{equation}
On the other hand, for any $y \in \R^m$ it holds
\begin{equation} \label{fatto2}
\cG_s(\bar f(y), Q\a{q_0}) = \cG_s(F((y,0)), F(y,R/L)) \leq C\, R\,,
\end{equation}
so that if we combine \eqref{fatto1} and \eqref{fatto2} we obtain
\begin{equation} \label{chiudo}
\cG_s(\bar f(y), Q\a{q}) \leq \delta_q + C\, R \overset{\eqref{second case}}{\leq} 2\, \delta_q \leq 4\, \cG_s(f(x), Q\a{q})
\end{equation}
for every $x\in B$, for every $y \in \R^m$. This is stronger than \eqref{e:ext_osc1}, and thus it concludes the proof. 
\end{proof}

Finally, we record here another simple consequence of the existence of the embeddings and of the retraction (for which, again, an intrinsic proof in the
spirit of \cite[Section 4.3.1]{DLS_Qvfr} is also possible).

\begin{lemma}[Luckhaus lemma]\label{lem.easyLuckhaus}
There is a constant $C(m,n,Q)$ with the following property. Assume $f,g \in W^{1,2} (\mathbb S^{m-1}, \Iqspec)$ (resp.
$f,g\in W^{1,2} (\mathbb S^{m-1}, \IQSn)$ and let $\lambda <\frac{1}{2}$ be a given positive number. Then there
is a $u\in W^{1,2} (B_1\setminus B_{1-\lambda}, \Iqspec)$ (resp. $u\in W^{1,2} (B_1\setminus B_{1-\lambda}, \IQSn)$) such that
\begin{align}
& u_{\partial B_1} = f \qquad \mbox{and}\qquad u|_{\partial B_{1-\lambda}} = g \left({\textstyle{\frac{\cdot}{1-\lambda}}}\right)\\
& \Dir (u) \leq C \lambda \left(\Dir (f, \mathbb S^{m-1}) + \Dir (g, \mathbb S^{m-1})\right) + C \lambda^{-1} \int_{\mathbb S^{m-1}} \mathcal{G}_s (f,g)^2\, .  
\end{align}
If $f,g$ are, in addition, Lipschitz continuous, then the interpolating function $u$ can be chosen such that
\begin{equation}\label{e:interpolation Lipschitz bound}
	\Lip(u) \le C  \left( \Lip(f) + \Lip(g) \right) + C \lambda^{-1} \norm{\G_s(f,g)}_\infty\, .
\end{equation}
\end{lemma}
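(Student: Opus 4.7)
The approach is to reduce the statement to the classical Euclidean Luckhaus construction by exploiting the biLipschitz embedding $\zetab$ and the Lipschitz retraction $\varrhob$ of Theorem \ref{thm:embedding_and_retraction}. I describe the argument for the $\Iqspec$-valued case; the case of $\IQSn$ is identical, using the analogous maps $\zetab_0, \varrhob_0$ from Remark \ref{r:average_zero}.

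I embed the boundary data by setting $F := \zetab \circ f$ and $G := \zetab \circ g$, which lie in $W^{1,2}(\Sf^{m-1}, \R^{\bar N})$ and satisfy $\Dir(F, \Sf^{m-1}) = \Dir(f, \Sf^{m-1})$, and analogously for $G$, by property (i)(b). On the annulus $B_1 \setminus B_{1-\lambda}$ I then define the radially affine interpolation
\begin{equation*}
U(r\theta) := \frac{r-(1-\lambda)}{\lambda}\, F(\theta) + \frac{1-r}{\lambda}\, G(\theta), \qquad r \in [1-\lambda,1], \; \theta \in \Sf^{m-1},
\end{equation*}
and finally set $u := \zetab^{-1} \circ \varrhob \circ U$. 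Since $F(\theta), G(\theta) \in \mathpzc{Q}$ for all $\theta$ and $\varrhob$ is the identity on $\mathpzc{Q}$, the traces of $u$ on $\partial B_1$ and $\partial B_{1-\lambda}$ are $f$ and $g(\cdot/(1-\lambda))$ respectively, as required.

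For the Dirichlet estimate, a standard polar-coordinate split gives $|\partial_r U|^2 = \lambda^{-2}|F-G|^2$ and $|\nabla_\theta U|^2 \leq 2(|\nabla_\theta F|^2 + |\nabla_\theta G|^2)$. Since $\lambda < \tfrac{1}{2}$ forces $r \in [\tfrac{1}{2},1]$, integration yields
\begin{equation*}
\int_{B_1 \setminus B_{1-\lambda}} |DU|^2 \leq C \lambda\left(\Dir(F,\Sf^{m-1}) + \Dir(G,\Sf^{m-1})\right) + C \lambda^{-1}\int_{\Sf^{m-1}} |F-G|^2,
\end{equation*}
and the identity $\Dir(u) = \int |D(\varrhob \circ U)|^2 \leq \Lip(\varrhob)^2 \int |DU|^2$, together with the pointwise bound $|F(\theta)-G(\theta)| \leq \Lip(\zetab)\, \G_s(f(\theta),g(\theta))$, then delivers the claimed energy estimate. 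For the Lipschitz case, the same $U$ satisfies the pointwise bound $|\nabla U| \leq \lambda^{-1}\|F-G\|_\infty + C(\Lip(F)+\Lip(G))$, so composition with the Lipschitz maps $\varrhob$ and $\zetab^{-1}$ produces the bound \eqref{e:interpolation Lipschitz bound}. The only real structural observation is that $\varrhob|_{\mathpzc{Q}} = \mathrm{id}$, which is what guarantees that the prescribed traces survive the projection; the rest of the argument is bookkeeping of constants through the biLipschitz dictionary, with no substantial obstacle.
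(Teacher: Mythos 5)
Your proof is correct and is essentially identical to the paper's: both define $u = (\zetab^{-1}\circ\varrhob)\bigl(\frac{|x|-(1-\lambda)}{\lambda}\zetab(f(\frac{x}{|x|})) + \frac{1-|x|}{\lambda}\zetab(g(\frac{x}{|x|}))\bigr)$ and reduce the estimates to the classical Euclidean interpolation via the biLipschitz embedding and the retraction. The paper simply states the formula and leaves the verification implicit, whereas you have (correctly) supplied the routine bookkeeping for the traces, the energy bound, and the Lipschitz bound.
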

\begin{proof}
Consider the case $\Iqspec$. The map $u$ can be explicitly defined via
\[
u (x) = (\zetab^{-1} \circ \varrhob ) \left(\frac{|x| - (1-\lambda)}{\lambda} \zetab \left(f \left(\frac{x}{|x|}\right)\right) + \frac{1-|x|}{\lambda} 
\zetab \left(g \left(\frac{x}{|x|}\right)\right)\right)\, .
\]
In the case $\IQSn$ we use the maps $\zetab_0$ and $\varrhob_0$ of Remark \ref{r:average_zero} in place of $\zetab$ and $\varrhob$. 
\end{proof}

Another useful tool will be the following approximation lemma. It is the $\Iqspec$ version of \cite[Lemma 4.5]{DLS_Lp}.
\begin{lemma}\label{l:lip_app}
Let $f$ be a map in $W^{1,2}(B_r, \Iqspec)$, where $B_r \subset \R^m$. Then for every $\varepsilon$ there
exists an approximating map $f_\varepsilon \in W^{1,2}(B_r, \Iqspec)$ such that
\begin{itemize}
\item[(a)] $f_\varepsilon$ is Lipschitz continuous;
\item[(b)] The following estimate holds:
\begin{equation}\label{e:lip_smoothing}
\int_{B_r}\G_s(f,f_\varepsilon)^2+\int_{B_r}\big(|Df|-|Df_\varepsilon|\big)^2
+ \int_{B_r} |D (\etab\circ f)- D(\etab\circ f_\varepsilon)|^2
\leq \varepsilon.
\end{equation}
\end{itemize}
If $f\vert_{\partial B_r}\in W^{1,2}(\partial B_r,\Iqspec)$,
then $f_\epsilon$ can be chosen to satisfy also
\begin{equation}\label{e:approx bordo}
\int_{\partial B_r}\G(f,f_\varepsilon)^2+\int_{\partial B_r}\big(|Df|-|Df_\varepsilon|\big)^2 \leq \varepsilon.
\end{equation}
\end{lemma}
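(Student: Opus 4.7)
The approach is to reduce, via the biLipschitz embedding of Theorem \ref{thm:embedding_and_retraction}, to the Lipschitz approximation of classical Euclidean-valued Sobolev maps, and then project back to $\Iqspec$ using the almost-retraction $\varrhob^\star_\delta$. This mirrors the proof of the analogous statement for classical $Q$-valued maps in \cite[Lemma 4.5]{DLS_Lp}. Set $g := \zetab\circ f \in W^{1,2}(B_r, \R^{\bar N})$, which takes values in $\mathpzc{Q}$ and satisfies $\int_{B_r}|Dg|^2 = \Dir(f, B_r)$ by property (b). After extending $g$ to a neighborhood of $B_r$, regularize by standard mollification to obtain smooth $g_\eta$ with $g_\eta \to g$ in $W^{1,2}(B_r, \R^{\bar N})$. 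Since $\mathrm{dist}(g_\eta, \mathpzc{Q}) \to 0$ in $L^2$, for every fixed $\delta > 0$ the sets
\[
E_\eta^\delta := \{x \in B_r : \mathrm{dist}(g_\eta(x), \mathpzc{Q}) \leq \delta^{nQ+1}\}
\]
satisfy $|B_r \setminus E_\eta^\delta| \to 0$ as $\eta \to 0$. The candidate approximation is
\[
f_\eta^\delta := \zetab^{-1} \circ \varrhob^\star_\delta \circ g_\eta,
\]
which is Lipschitz as a composition of Lipschitz maps.

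The $\G_s$-estimate follows from property (a): $\G_s(f, f_\eta^\delta) \leq C|g - \varrhob^\star_\delta(g_\eta)|$, which the triangle inequality splits into $|g-g_\eta|$ (small by mollification) and $|g_\eta - \varrhob^\star_\delta(g_\eta)| \leq C|g-g_\eta| + C\delta^{8^{-nQ}}$, using $\varrhob\circ g = g$, the Lipschitz continuity of $\varrhob$ and $\varrhob^\star_\delta$, and the bound $|\varrhob^\star_\delta(P) - P| \leq C\delta^{8^{-nQ}}$ for $P \in \mathpzc{Q}$. The $\etab$-estimate is immediate from Remark \ref{r:average_zero}: both $\zetab$ and $\varrhob^\star_\delta$ preserve the last $n$ coordinates, so $\etab\circ f_\eta^\delta$ coincides with the last $n$ components of $g_\eta$, whose $W^{1,2}$-convergence to $\etab\circ f$ yields the estimate.

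The core is the Dirichlet-density estimate. By property (b), $|Df|^2 = |Dg|^2$ and $|Df_\eta^\delta|^2 = |D(\varrhob^\star_\delta\circ g_\eta)|^2$ almost everywhere, so estimate \eqref{e:est_rho_star} yields
\[
\int_{B_r}|D(\varrhob^\star_\delta\circ g_\eta)|^2 \leq (1+C\delta^{8^{-nQ-1}})\int_{E_\eta^\delta}|Dg_\eta|^2 + C\int_{B_r\setminus E_\eta^\delta}|Dg_\eta|^2.
\]
Since $|Dg_\eta|^2 \to |Dg|^2$ in $L^1(B_r)$ the family $\{|Dg_\eta|^2\}$ is equiintegrable, so the second term is $o_\eta(1)$; together with $\int_{E_\eta^\delta}|Dg_\eta|^2 \to \int|Dg|^2$ and the matching lower bound furnished by $L^2$-lower semicontinuity (applied to $\varrhob^\star_\delta\circ g_\eta \to g$ in $L^2$), a two-parameter selection of $\delta = \delta(\epsilon)$ first and $\eta = \eta(\delta, \epsilon)$ second gives norm convergence $\int|Df_\eta^\delta|^2 \to \int|Df|^2$. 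Since $\{D(\varrhob^\star_\delta\circ g_\eta)\}$ is $L^2$-bounded, weak $L^2$-compactness combined with the Rademacher chain rule and dominated convergence identifies the weak limit with $Dg$; uniform convexity of $L^2$ then upgrades norm convergence to strong $L^2$-convergence of the gradient densities, which is precisely the required bound on $\int(|Df|-|Df_\eta^\delta|)^2$.

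The boundary clause \eqref{e:approx bordo} is handled by running the same construction on $\partial B_r$ using tangential mollification, and then stitching the interior and boundary approximations in a thin annular collar via Lemma \ref{lem.easyLuckhaus} at arbitrarily small additional energy cost. The principal obstacle is converting the \emph{integral} energy-comparison bound \eqref{e:est_rho_star} into the \emph{pointwise} density closeness $(|Df|-|Df_\eta^\delta|)^2 \in L^1$: this forces the two-stage limit where $\delta\to 0$ controls the multiplicative loss $C\delta^{8^{-nQ-1}}$ and $\eta \to 0$ afterwards exhausts the exceptional set $B_r\setminus E_\eta^\delta$ where $g_\eta$ fails to be $\delta^{nQ+1}$-close to $\mathpzc{Q}$.
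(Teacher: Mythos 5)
Your approach is essentially sound for the interior estimate but is genuinely different from the paper's. The paper's proof invokes \cite[Lemma 4.5]{DLS_Lp} verbatim, substituting the $\Iqspec$-version of the Lipschitz extension theorem (Corollary \ref{cor.Lipschtiz extension for Iqs}); that argument is a Lipschitz truncation: restrict $f$ to the sublevel set $K_\lambda$ of the maximal function of $|Df|^2$, on which $f$ is automatically $C\lambda$-Lipschitz, extend to $B_r$ by the extension theorem, and let $\lambda\to\infty$. Since the resulting $f_\varepsilon$ \emph{coincides} with $f$ on $K_\lambda$ and $|B_r\setminus K_\lambda|\to 0$, the terms $\G_s(f,f_\varepsilon)$ and $(|Df|-|Df_\varepsilon|)$ are supported on a small set, and the estimates follow from equiintegrability of $|Df|^2$. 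Your route --- transfer by $\zetab$, mollify, project back with $\varrhob^\star_\delta$ --- instead produces an $f_\varepsilon$ that nowhere coincides with $f$, and you make up for this with a two-parameter limit; this works, but costs more without buying more. Two minor inaccuracies: identifying the weak $L^2$-limit of $D(\varrhob^\star_\delta\circ g_\eta)$ with $Dg$ is not a ``Rademacher chain rule plus dominated convergence'' point --- it follows simply because the maps converge to $g$ in $L^2$ and the gradients are $L^2$-bounded, so the distributional limit of the gradients is $Dg$; and the lower-semicontinuity bound requires a diagonal sequence $(\delta_k,\eta_k)\to 0$, since $\varrhob^\star_\delta\circ g_\eta\to g$ in $L^2$ only after both parameters vanish together (you use $\|\varrhob^\star_\delta\circ g-g\|_\infty\lesssim\delta^{8^{-nQ}}$ and the uniform-in-$\delta$ Lipschitz bound on $\varrhob^\star_\delta$ implicitly here).

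The boundary clause \eqref{e:approx bordo} is the genuine soft spot in your proposal. Running a tangential mollification on $\partial B_r$ and stitching via the Luckhaus Lemma \ref{lem.easyLuckhaus} ``at arbitrarily small additional energy cost'' is not immediate: the interior and tangential mollifications have different traces on $\partial B_r$, the Luckhaus interpolant lives on an annulus $B_r\setminus B_{r(1-\lambda)}$ whose \emph{inner} trace must come from the interior approximant, and the interpolation energy includes a term $C\lambda^{-1}\int_{\mathbb S^{m-1}}\G_s(\cdot,\cdot)^2$ that forces the two boundary data to be $o(\lambda^{1/2})$-close in $L^2$ --- introducing a third parameter to balance against $\eta$ and $\delta$, and requiring you to estimate $|Df_\varepsilon|$ on the Luckhaus collar, which is not directly comparable to $|Df|$ there. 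None of this is insurmountable, but it would need to be worked out carefully, whereas the truncation proof handles it for free: one intersects the good set in $B_r$ with the good set on $\partial B_r$ for the tangential maximal function, and the extended map agrees with $f$ (resp.\ $f|_{\partial B_r}$) on a set whose complement is small both in $B_r$ and in $\partial B_r$. This is precisely why the paper leans on the Lipschitz extension theorem rather than the near-retraction for this lemma.
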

The proof is the very same as given in \cite[Lemma 4.5]{DLS_Lp}, only using the Lipschitz extension theorem for $\Iqspec$.

\section{Existence and compactness of $\Dir$-minimizers}

The following existence theorem is a simple consequence of the fact that we can identify $\Iqspec$ (resp. $\IQSn$) with a closed subset
of $\Iqs\times \Iqs \times \mathbb R^n$ (resp. $\Iqs \times \Iqs$), and that the Dirichlet energy of an $\Iqspec$-valued map is the sum of the Dirichlet energies of the corresponding factors (with the Dirichlet energy of the center of mass weightd by $Q$); see Definition \ref{d:Sobolev}. Therefore we leave the proof to the reader.

\begin{theorem}\label{thm:existence}
Assume $\Omega \subset \mathbb R^m$ is a bounded Lipschitz set and let $f\in W^{1,2} (\Omega, \Iqspec)$ (resp. $f\in W^{1,2} (\Omega, \IQSn)$. Then there is a map $g\in W^{1,2} (\Omega, \Iqspec)$ (resp. $g\in W^{1,2} (\Omega, \IQSn)$) such that $(g-f)|_{\partial \Omega} =0$ (in the
sense of the trace theorem \cite[Proposition 2.10]{DLS_Qvfr}) and which minimizes the Dirichlet energy over all maps with the same trace property. 
\end{theorem}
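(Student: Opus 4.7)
The plan is to apply the direct method of the calculus of variations, reducing the problem to the familiar Euclidean setting via the biLipschitz embedding $\zetab:\Iqspec \to \R^{\bar N}$ and the retraction $\varrhob:\R^{\bar N}\to \mathpzc{Q}$ of Theorem \ref{thm:embedding_and_retraction}. I will treat the $\Iqspec$ case; the $\IQSn$ case is handled identically using the maps $\zetab_0$ and $\varrhob_0$ of Remark \ref{r:average_zero}.

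First, I set $I := \inf\{\Dir(g,\Omega) : g \in W^{1,2}(\Omega, \Iqspec),\ (g-f)|_{\partial \Omega} = 0\}$, which is finite because $f$ itself is an admissible competitor, and pick a minimizing sequence $\{g_k\}$. Let $h_k := \zetab \circ g_k \in W^{1,2}(\Omega, \R^{\bar N})$. Property (b) of Theorem \ref{thm:embedding_and_retraction} yields $\int_\Omega |Dh_k|^2 = \Dir(g_k, \Omega)$, so $\{Dh_k\}$ is bounded in $L^2$. Since each $h_k$ has the same trace $\zetab \circ f|_{\partial \Omega}$ on $\partial \Omega$, the classical Poincaré inequality for vector-valued Sobolev maps with prescribed trace gives a uniform bound in $W^{1,2}(\Omega,\R^{\bar N})$.

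Next, by weak $W^{1,2}$ compactness and the Rellich--Kondrachov theorem, a subsequence (not relabeled) converges weakly in $W^{1,2}$, strongly in $L^2$, and a.e. on $\Omega$ to some $h \in W^{1,2}(\Omega, \R^{\bar N})$. The key observation is that $\mathpzc{Q} = \zetab(\Iqspec)$ is closed in $\R^{\bar N}$: indeed, $\Iqspec$ is complete (it is isometric to $\IQSn \times \R^n$ by Proposition \ref{prop.isometry}, and $\IQSn$ is in turn a closed subset of the complete space $\Iqsn \times \Iqsn$), and since $\zetab^{-1}$ is Lipschitz on $\mathpzc{Q}$, any Cauchy sequence in $\mathpzc{Q}$ comes from a Cauchy, hence convergent, sequence in $\Iqspec$. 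Consequently the a.e. limit $h$ takes values in $\mathpzc{Q}$, and $g := \zetab^{-1} \circ h$ is a well-defined element of $W^{1,2}(\Omega, \Iqspec)$. The continuity of the trace operator under weak $W^{1,2}$-convergence ensures that $h|_{\partial \Omega} = \zetab \circ f|_{\partial \Omega}$, so $(g - f)|_{\partial \Omega} = 0$.

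To conclude, I will invoke lower semicontinuity. Using property (b) once more together with the standard weak $L^2$-lower semicontinuity of $u \mapsto \int |Du|^2$, one obtains
\[
\Dir(g, \Omega) \;=\; \int_\Omega |Dh|^2 \;\leq\; \liminf_{k \to \infty} \int_\Omega |Dh_k|^2 \;=\; \liminf_{k \to \infty} \Dir(g_k, \Omega) \;=\; I,
\]
so $g$ realizes the infimum. The only non-routine point is the closedness of $\mathpzc{Q}$ in $\R^{\bar N}$, which is needed to lift the Euclidean limit $h$ back to an $\Iqspec$-valued map; as indicated, this is an immediate consequence of the biLipschitz property (a) of $\zetab$ together with the completeness of $\Iqspec$. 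All other ingredients are standard.
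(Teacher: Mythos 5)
Your proof is correct, but it takes a different route from what the paper indicates. The paper's one-line sketch says to identify $\Iqspec$ (resp.\ $\IQSn$) with the closed subset $\IQSn\times\R^n$ of $\Iqs\times\Iqs\times\R^n$ via the isometry $\iso$ of Proposition \ref{prop.isometry}, observe by Definition \ref{d:Sobolev} that the Dirichlet energy factorizes as $\Dir(v)+\Dir(w)+Q\,\Dir(z)$, and then run the direct method componentwise, relying entirely on the already-established existence, compactness, and lower semicontinuity for $\Iqs$-valued Sobolev maps from \cite{DLS_Qvfr}; the only new ingredient is that the a.e.\ limit of a minimizing sequence still lands in the closed subset $\IQSn\times\R^n$, which is automatic. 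You instead go through the Euclidean biLipschitz embedding $\zetab:\Iqspec\to\R^{\bar N}$ of Theorem \ref{thm:embedding_and_retraction} and run the direct method in the scalar-valued space $W^{1,2}(\Omega,\R^{\bar N})$, which reduces compactness and lower semicontinuity to the classical Rellich--Kondrachov and weak $L^2$-lower semicontinuity of the Dirichlet integral. The one genuine extra point you must supply — and correctly do — is the closedness of $\mathpzc{Q}=\zetab(\Iqspec)$ in $\R^{\bar N}$, which follows from the biLipschitz property of $\zetab$ together with completeness of $\Iqspec$. The two proofs are of comparable length; yours leans harder on Theorem \ref{thm:embedding_and_retraction}(b) but is more self-contained in that it does not invoke the machinery for $\Iqs$-valued Sobolev maps beyond what is needed to define the spaces, while the paper's route treats each factor with the ready-made $\Iqs$-theory and never needs the Euclidean embedding at all. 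One small thing you gloss over — that $g:=\zetab^{-1}\circ h$ actually lies in $W^{1,2}(\Omega,\Iqspec)$ as defined through $\iso$ in Definition \ref{d:Sobolev} — follows because $\iso\circ\zetab^{-1}$ is Lipschitz and the $\Iqs$-valued components can in turn be tested against the biLipschitz embedding $\xib_{BW}$ of \cite{DLS_Qvfr}; it would be worth a line to acknowledge this compatibility between the two definitions of the Sobolev space.
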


Note that if $f(x) = (\tilde{f}(x) , 1)$ for a.e. $x \in \Omega$ then $g(x)=(\tilde{g}(x),1)$ for a.e. $x$ and $\tilde{g}$ is minimizing in  $W^{1,2}(\Omega, \Iqs)$.

\begin{definition}\label{d:minimizing}
A map $g$ as in Theorem \ref{thm:existence} will be called a $\Dir$-minimizer (or $\Dir$-minimizing) in $\Omega$. 
\end{definition}

Moreover, the following is another obvious consequence of the ``factorization'' of $\Iqspec$ into $\IQSn\times \R^n$,
in particular of \eqref{e:split_energy}. 

\begin{proposition}\label{p:factorization}
A map $u\in W^{1,2} (\Omega, \Iqspec)$ is $\Dir$-minimizing in $\Omega$ if and only if both $u\ominus \etab\circ u$ and $\etab\circ u$ are 
$\Dir$-minimizing in $\Omega$. Moreover $u\ominus \etab\circ u$ is a $\Dir$-minimizer in $W^{1,2} (\Omega, \IQSn)$ if and only if it is a 
$\Dir$-minimizer in $\Iqspec$. 
\end{proposition}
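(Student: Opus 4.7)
The plan is to exploit the splitting identity \eqref{e:split_energy}, namely
\[
\Dir(f, \Omega) = \Dir(f \ominus \etab\circ f, \Omega) + Q\,\Dir(\etab\circ f, \Omega)\,,
\]
which, via the isometry $\iso$ of Proposition \ref{prop.isometry}, is just the observation that if $\iso\circ f = (v,w,z)$ then $\iso\circ(f \ominus \etab\circ f) = (v,w,0)$ and $\etab\circ f = z$. In particular, the decomposition commutes with trace on $\partial\Omega$: the condition $\tilde u|_{\partial\Omega} = u|_{\partial\Omega}$ is equivalent to requiring both $\etab\circ\tilde u|_{\partial\Omega} = \etab\circ u|_{\partial\Omega}$ and $(\tilde u\ominus\etab\circ\tilde u)|_{\partial\Omega} = (u\ominus\etab\circ u)|_{\partial\Omega}$. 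This is the one technical point requiring care but it is immediate from Corollary \ref{c:separate_Lip} and the definition of the trace in \cite[Proposition 2.10]{DLS_Qvfr} applied componentwise.

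For the first equivalence, I argue the two directions separately. For ($\Leftarrow$), given any competitor $\tilde u \in W^{1,2}(\Omega, \Iqspec)$ with the same trace as $u$, apply \eqref{e:split_energy} to both $u$ and $\tilde u$ and use the minimality of each of the two factors on the respective competitors $\etab\circ\tilde u$ and $\tilde u \ominus \etab\circ\tilde u$ (which have the correct traces by the observation above) to conclude $\Dir(\tilde u) \geq \Dir(u)$. For ($\Rightarrow$), given an $\R^n$-valued competitor $z'$ of $\etab\circ u$, write $\iso\circ u = (v,w,z)$ and define $\tilde u$ by $\iso\circ\tilde u := (v, w, z')$; this is a valid competitor for $u$, and minimality of $u$ combined with \eqref{e:split_energy} (after cancellation of $\Dir(v)+\Dir(w)$) yields $\Dir(z') \geq \Dir(\etab\circ u)$. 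Symmetrically, given an $\IQSn$-valued competitor $\tilde h$ with $\iso\circ\tilde h = (v', w', 0)$ of $u\ominus\etab\circ u$, set $\iso\circ \tilde u := (v', w', z)$ and argue in the same way.

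For the second equivalence, one direction is trivial: any competitor in $W^{1,2}(\Omega, \IQSn)$ can be viewed as an $\Iqspec$-valued competitor via $\iso^{-1}$ with vanishing third component and has the same Dirichlet energy, so being $\Dir$-minimizing among $\Iqspec$-valued maps is at least as strong as being $\Dir$-minimizing among $\IQSn$-valued maps. For the converse, assume that $u\ominus\etab\circ u$ minimizes in $W^{1,2}(\Omega, \IQSn)$, and let $\tilde u \in W^{1,2}(\Omega, \Iqspec)$ be any competitor with the same trace. Writing $\iso\circ\tilde u = (v', w', z')$, we have $z'|_{\partial\Omega} = 0$ (since $\etab\circ(u\ominus\etab\circ u) \equiv 0$), and the map $\tilde u\ominus\etab\circ\tilde u$ corresponds to $(v', w', 0)$, hence is an $\IQSn$-valued competitor with the correct trace. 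Applying the $\IQSn$-minimality assumption and the splitting identity,
\[
\Dir(\tilde u) = \Dir(\tilde u \ominus \etab\circ\tilde u) + Q\,\Dir(\etab\circ \tilde u) \geq \Dir(u\ominus\etab\circ u) + 0\,,
\]
which is the required inequality.

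No step here is a genuine obstacle: the proof is a bookkeeping exercise that uses only (i) the orthogonal-type decomposition \eqref{e:split_energy}, (ii) the fact that $\iso$ is an isometry so that the decomposition is compatible with traces and with the respective Sobolev structures, and (iii) the fact that the Dirichlet energy of an $\R^n$-valued piece is nonnegative. The only point worth underlining is that in the converse direction of the second equivalence one does not need to perturb $\tilde u$: the $\IQSn$-competitor is automatically produced by the canonical projection $\tilde u \mapsto \tilde u \ominus \etab\circ\tilde u$.
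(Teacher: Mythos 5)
Your proof is correct, and it takes exactly the approach the paper intends: the paper presents Proposition \ref{p:factorization} as an "obvious consequence" of the factorization $\Iqspec \cong \IQSn \times \R^n$ and the splitting identity \eqref{e:split_energy}, and omits the details; your argument is precisely the routine verification that fills in those details (competitor bookkeeping in both directions, compatibility of the $\iso$-decomposition with traces — which is built into Definition \ref{d:Sobolev} — and the nonnegativity of $\Dir(\etab\circ\tilde u)$ in the converse of the second equivalence).
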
 

We close this section by the following compactness property of $\Dir$-minimizers

\begin{proposition}\label{prop.compactness of minimizers}
Let $\{g_k\}\subset W^{1,2} (B_r, \Iqspec)$ be a sequence of maps which are $\Dir$-minimizing in $B_r$ and which converge weakly to some $g$. Then, for every $s<r$, the sequence converges strongly in $W^{1,2} (B_s, \Iqspec)$ and moreover the limiting $g$ is $\Dir$-minimizing in $B_s$. If $\limsup \Dir (g_k|_{\partial B_r }) < \infty$, then the same conclusion holds in $B_r $. 
\end{proposition}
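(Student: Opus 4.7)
The plan is to reduce to a Euclidean setting via the embedding $\zetab$ of Theorem \ref{thm:embedding_and_retraction} and then adapt the classical compactness scheme available for $\A_Q$-valued Dirichlet minimizers (cf.\ \cite{DLS_Qvfr}). Since $\zetab$ is biLipschitz with $\int_B |D(\zetab\circ u)|^2 = \Dir(u,B)$, the weak convergence $g_k \weak g$ in $W^{1,2}(B_r, \Iqspec)$ is equivalent to $\zetab\circ g_k \weak \zetab\circ g$ in $W^{1,2}(B_r, \R^{\bar N})$. By Rellich, this yields strong $L^2$ convergence on every $B_s$ with $s<r$, and the lower semicontinuity bound $\Dir(g, B_s) \leq \liminf_k \Dir(g_k, B_s)$ is immediate.

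To establish minimality of $g$ on $B_s$, $s<r$, the strategy is to compare $g_k$ with a tailored competitor built from an arbitrary $h\in W^{1,2}(B_s,\Iqspec)$ having the same trace as $g$ on $\partial B_s$. Using Fubini together with the uniform $W^{1,2}$ bound on $\{g_k\}$ and the $L^2$ convergence to $g$, one can select, along a subsequence, a radius $s'\in(s,r)$ for which $\{g_k|_{\partial B_{s'}}\}$ is bounded in $W^{1,2}(\partial B_{s'},\Iqspec)$ and $g_k|_{\partial B_{s'}}\to g|_{\partial B_{s'}}$ in $L^2$. For $\lambda\in(0,s'-s)$, Lemma \ref{lem.easyLuckhaus} produces a map on $B_{s'}\setminus B_{s'-\lambda}$ equal to $h$ on the inner sphere and to $g_k$ on the outer sphere, whose Dirichlet energy is controlled by
\[
C\lambda\bigl(\Dir(g_k,\partial B_{s'}) + \Dir(h,\partial B_{s'-\lambda})\bigr) + C\lambda^{-1}\int_{\partial B_{s'}} \G_s(g_k,h)^2.
\]
Gluing this annular map with $h$ on $B_{s'-\lambda}$ provides an admissible competitor for $g_k$ on $B_{s'}$; $\Dir$-minimality of $g_k$ then gives an upper bound on $\Dir(g_k,B_{s'})$. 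Sending $k\to\infty$, then $\lambda\to 0$, and finally $s'\downarrow s$ (using $\Dir(g,B_{s'}\setminus B_s)\to 0$), one obtains $\Dir(g,B_s)\leq \Dir(h,B_s)$, i.e.\ the desired minimality.

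The strong $W^{1,2}$-convergence on $B_s$ follows from the same construction applied with the choice $h=g$: this produces $\limsup_k \Dir(g_k,B_s)\leq \Dir(g,B_s)$, which combined with lower semicontinuity yields $\int_{B_s}|D(\zetab\circ g_k)|^2 \to \int_{B_s}|D(\zetab\circ g)|^2$. Together with the weak convergence $\zetab\circ g_k\weak \zetab\circ g$ in the Hilbert space $W^{1,2}(B_s,\R^{\bar N})$, this upgrades to strong convergence there, which transfers back to $g_k\to g$ in $W^{1,2}(B_s,\Iqspec)$ via the biLipschitz character of $\zetab$ (composing, when convenient, with the retraction $\varrhob$ of Theorem \ref{thm:embedding_and_retraction}(ii)). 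The last claim, convergence up to the full ball $B_r$, is obtained by performing the slicing step directly at $s'=r$, which is permitted precisely under the hypothesis $\limsup_k \Dir(g_k|_{\partial B_r})<\infty$.

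The main delicate point I anticipate is the management of the double limit $k\to\infty$, $\lambda\to 0$: the $\lambda^{-1}$ coefficient in the Luckhaus estimate requires that the traces converge strongly in $L^2$ on $\partial B_{s'}$ \emph{before} $\lambda$ is sent to zero, which is exactly what the Fubini-slicing step guarantees. All other ingredients are directly available: the biLipschitz embedding and retraction from Theorem \ref{thm:embedding_and_retraction}, the Luckhaus interpolation in Lemma \ref{lem.easyLuckhaus}, the factorization of the Dirichlet energy built into Definition \ref{d:Sobolev}, and the classical fact that in a Hilbert space weak convergence plus norm convergence implies strong convergence.
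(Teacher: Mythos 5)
Your proposal follows essentially the same route as the paper's proof: a Fatou/Fubini radius-selection step to reduce to the case of uniformly bounded boundary energy, the Luckhaus interpolation of Lemma \ref{lem.easyLuckhaus} to construct competitors, and the standard Hilbert-space fact that weak convergence plus convergence of norms gives strong convergence. The paper phrases the final step as a contradiction argument (supposing either non-minimality of $g$ or a strict drop of energy), whereas you argue directly against an arbitrary competitor $h$ and then set $h=g$ to get the energy limit; these are equivalent presentations of the same argument.

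One small notational imprecision worth flagging: when you apply the Luckhaus lemma on the annulus inside $B_{s'}$, the inner and outer trace data must live on the \emph{same} sphere $\partial B_{s'}$, and the natural inner datum is $g|_{\partial B_{s'}}$, not $h$ (which is defined only on $B_s \subsetneq B_{s'}$). What you actually glue inside is a rescaled copy of the extension of $h$ by $g$ on $B_{s'}\setminus B_s$, whose trace on $\partial B_{s'}$ is precisely $g|_{\partial B_{s'}}$. Correspondingly, the $\lambda^{-1}$ term should read $\lambda^{-1}\int_{\partial B_{s'}}\G_s(g_k,g)^2$, and the $C\lambda$ term should involve $\Dir(g|_{\partial B_{s'}})$; also, the annulus produced by the rescaled Luckhaus map is $B_{s'}\setminus B_{s'(1-\lambda)}$ rather than $B_{s'}\setminus B_{s'-\lambda}$. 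These are cosmetic fixes; you have correctly identified the crucial point about the order of limits (send $k\to\infty$ first so the $\lambda^{-1}$ term vanishes by strong $L^2$ trace convergence, then $\lambda\to 0$, then $s'\downarrow s$), and all the ingredients you invoke are exactly those used in the paper.
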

\begin{proof} First of all,
using Fatou's lemma we get
\[
\int_s^r \liminf_{k\to \infty} \Dir (g_k|_{\partial B_\sigma })\, d\sigma < \infty
\]
and thus we can reduce the first statement to the second. We assume therefore, without loss of generality, that $r=1$ and
\begin{equation}\label{e:aggiuntiva_bordo}
\sup_k \Dir (g_k|_{\partial B_1}) < \infty\, .
\end{equation}
Observe next that, by weak convergence and trace theorems in the Sobolev spaces, we know:
\begin{align}
& \lim_{k\to\infty} \int_{B_1} \mathcal{G}_s (g_k, g)^2 = 0\\
& \lim_{k\to \infty} \int_{\partial B_1} \mathcal{G}_s (g_k, g)^2 = 0\\
&\liminf_{k\to\infty} \Dir (g_k, B_1) \geq \Dir (g, B_1)\\
&\liminf_{k\to\infty} \Dir (g_k|_{\partial B_1}, \mathbb S^{m-1}) \geq \Dir (g|_{\partial B_1}, \mathbb S^{m-1})\, .
\end{align}
Given any $\lambda \in ]0, \frac{1}{2}]$, we can thus apply the Luckhaus Lemma \ref{lem.easyLuckhaus} to find a sequence
of maps $h_k$ on $B_1\setminus B_{1-\lambda}$ such that
\begin{itemize}
\item $h_k (x) = g_k (x)$ for every $x\in \partial B_1$ and $h_k (x) = g (\frac{x}{1-\lambda})$ for any $x\in \partial B_{1-\lambda}$;
\item the following estimate holds
\begin{align}
&\limsup_{k\to \infty} \Dir (h_k, B_1\setminus \overline{B}_{1-\lambda}) \leq C \lambda K\, ,\label{e:perdita}
\end{align}
where $C$ is a geometric constant depending on $m,n,Q$ and 
\[
K = \limsup_k \Dir (g_k|_{\partial B_1}, \mathbb S^{m-1})\, .
\]
\end{itemize}
Assume now by contradiction that either $g$ is not $\Dir$-minimizing or that 
\[
\Dir (g, B_1) < \limsup_{k\to \infty} \Dir (g_k, B_1)\, .
\]
For a subsequence of $\{g_k\}$, not relabeled, we then have that there is a map $\hat{g}$ with $\hat{g}|_{\partial B_1} = 
g|_{\partial B_1}$ and
\begin{equation}\label{e:guadagno}
\lim_{k\to \infty} (\Dir (g_k, B_1) - \Dir (\hat{g}, B_1)) = L > 0\, . 
\end{equation}
Consider then the function 
\[
\hat{g}_k (x) =
\begin{cases}
h_k (x) \qquad &\mbox{if $1-\lambda < |x| < 1$}\\
\hat{g} \left({\textstyle{\frac{x}{1-\lambda}}}\right) \qquad &\mbox{if $|x|\leq 1-\lambda$.}
\end{cases}
\]
Since $\Dir (\hat{g}_k, B_{1-\lambda}) = (1-\lambda)^{m-2} \Dir (\hat g, B_1)\leq \Dir (\hat{g}, B_1)$, combining
\eqref{e:guadagno} and \eqref{e:perdita} we achieve
\[
\liminf_{k\to \infty} (\Dir (g_k, B_1) - \Dir (\hat{g}_k, B_1) \geq L - CK \lambda\, .
\]
In particular, the right hand side of the last inequality can be made positive by choosing $\lambda$ appropriately small.
Since however $\hat{g}_k|_{\partial B_1} = g_k|_{\partial B_1}$, for $k$ large enough we would contradict the minimality of
$g_k$. 
\end{proof}

\section{First variations}

\subsection{Notation for $\Iqspec$-calculus}\label{s:notation_spec}
In this section we derive some key identities for $\Dir$-minimizers $u$ defined over a bounded domain $\Omega$, which come from computing 
first variations of the functional.
We distinguish two types of variations: inner variations and outer variations. Given the decomposition of $\Omega$ as in Definition \ref{d:pieces} in each of the
domains $\Omega_+$, $\Omega_-$ and $\Omega_0$, we can regard $u$ as an $\Iqs$-valued function, coinciding respectively with $u^+$, $u^-$ and $Q \a{\etab\circ u}$. By Proposition \ref{p:app_differentiability}, in each of these
domains the respective map is approximately differentiable and we can use the chain rules of 
\cite[Proposition 2.8]{DLS_Qvfr}. When we deal with integrals over the whole domain we would then have rather cumbersome formulas where we break the integral in the respective domains $\Omega_+$, $\Omega_-$ and $\Omega_0$, in spite of the fact that such formulas would nonetheless be rather straightforward. In order to simplify our notation we will then use the convention that $\sum_i \a{u_i (x)}$, resp. $\sum_i \a{Du_i (x)}$, will denote the multivalued maps
$\sum_i \a{u_i^+ (x)}$, $\sum_i \a{u_i^- (x)}$ and $Q\a{\etab \circ u (x)}$  (resp. $\sum_i \a{Du_i^+ (x)}$, 
$\sum_i \a{Du_i^- (x)}$ and $Q\a{D (\etab \circ u) (x)}$) depending on whether $x$ belongs to $\Omega_+$, $\Omega_-$ or $\Omega_0$. 

\subsection{Inner variations} Inner variations are generated by composing $u$ with one-parameter families of diffeomorphisms $\Phi_t$ of $\Omega$ which are
the identity on $\partial \Omega$. More specifically we consider a vector field $\varphi \in C^\infty_c (\Omega, \mathbb R^m)$, we let $\Phi_t (x) = x + t \varphi (x)$ and we
observe that,  whenever $|t|$ is sufficiently small, $u\circ \Phi_t$ is well defined, $u\circ \Phi_t \in W^{1,2}(\Omega,\Iqspec)$ and
$u\circ \Phi_t |_{\partial \Omega} = u|_{\partial \Omega}$. We therefore conclude
that, if $u$ is $\Dir$-minimizing, then $\Dir (u\circ \Phi_t) \geq \Dir (u)$ for all sufficiently small $t$, and thus
\begin{equation}\label{e:IV}
0 = \left. \frac{d}{dt}\right|_{t=0} \Dir (u\circ \Phi_t)\, .
\end{equation}

Using the discussion above we can break the domain $\Omega$ into the pieces $\Omega_+$, $\Omega_-$ and $\Omega_0$ 
where we use the chain rules of \cite[Proposition 2.8]{DLS_Qvfr} to prove the following proposition (which corresponds to the first part of \cite[Proposition 3.1]{DLS_Qvfr}). Note that, since $\Phi_t$ is a diffeomorphism, the partition of the domain $\Omega$ induced by the map $u \circ \Phi_{t}$ is given by $\{ \Phi_{t}^{-1}(\Omega_+), \Phi_{t}^{-1}(\Omega_-), \Phi_{t}^{-1}(\Omega_0)  \}$.
 
\begin{proposition}\label{p:inner_variations}
Let $\Omega$ be a bounded open set and $u\in W^{1,2} (\Omega,\Iqspec)$ a $\Dir$-minimizer. Then for every $\varphi\in C^\infty_c (\Omega, \mathbb R^m)$ we have
\begin{equation}\label{e:inner_variation}
\int_\Omega \left(2\,\sum_i \langle Du_i : Du_i \cdot D\varphi \rangle - |Du|^2 {\rm div}\, \varphi \right) = 0\, , 
\end{equation}
where $\langle A:B\rangle$ denotes the Hilbert-Schmidt scalar product between $n\times m$ matrices (i.e. $\langle A : B \rangle = \sum_{i,j} A_{ij} B_{ij}$). 
\end{proposition}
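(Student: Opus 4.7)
For $|t|$ small, $\Phi_t(x)=x+t\varphi(x)$ is a diffeomorphism of $\Omega$ which is the identity near $\partial\Omega$; in particular $u\circ\Phi_t\in W^{1,2}(\Omega,\Iqspec)$ and has the same trace on $\partial\Omega$ as $u$. Minimality of $u$ then gives \eqref{e:IV}, and the proof reduces to computing the $t$-derivative of $\Dir(u\circ\Phi_t,\Omega)$ at $0$ and showing it equals the left-hand side of \eqref{e:inner_variation}. Since $\Phi_t$ is a diffeomorphism, the canonical decomposition of $\Omega$ induced by $u\circ\Phi_t$ is $\{\Phi_t^{-1}(\Omega_+),\Phi_t^{-1}(\Omega_-),\Phi_t^{-1}(\Omega_0)\}$; on each of these sets $u\circ\Phi_t$ is, respectively, $u^+\circ\Phi_t$, $u^-\circ\Phi_t$, or $(\etab\circ u)\circ\Phi_t$, i.e.\ the composition of a classical $\Iqs$-valued (or single-valued) Sobolev map with the smooth diffeomorphism $\Phi_t$. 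The classical chain rule (\cite[Proposition 2.8]{DLS_Qvfr}) then applies on each piece.

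Using Proposition \ref{p:app_differentiability} and the notational convention of Section \ref{s:notation_spec}, we may write
\[
\Dir(u\circ\Phi_t,\Omega)=\int_\Omega\sum_i |Du_i(\Phi_t(x))\cdot D\Phi_t(x)|^2\,dx\,,
\]
where the sum is interpreted piecewise as $\sum_i|Du_i^{\pm}(\Phi_t(x))D\Phi_t(x)|^2$ on $\Phi_t^{-1}(\Omega_\pm)$ and as $Q\,|D(\etab\circ u)(\Phi_t(x))D\Phi_t(x)|^2$ on $\Phi_t^{-1}(\Omega_0)$. Setting $y=\Phi_t(x)$ and $A_t(y):=D\Phi_t(\Phi_t^{-1}(y))$ yields
\[
\Dir(u\circ\Phi_t,\Omega)=\int_\Omega \sum_i |Du_i(y)\cdot A_t(y)|^2\,|\det A_t(y)|^{-1}\,dy.
\]
From $A_t=I+tD\varphi\circ\Phi_t^{-1}$ and $|\det A_t|=1+t\,\mathrm{div}\,\varphi+o(t)$ (uniformly in $y$), a direct expansion gives
\[
|Du_i A_t|^2|\det A_t|^{-1}=|Du_i|^2+t\bigl(2\langle Du_i:Du_i\cdot D\varphi\rangle-|Du_i|^2\mathrm{div}\,\varphi\bigr)+R_i(t,y),
\]
with $|R_i(t,y)|\le C|t|^{2}|Du_i(y)|^2$ for $|t|$ small, $C$ depending only on $\|D\varphi\|_\infty$. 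Dominated convergence (using $|Du|^2\in L^1$) allows us to differentiate under the integral; summing over $i$ and using $\sum_i|Du_i|^2=|Du|^2$ from Proposition \ref{p:app_differentiability} produces the identity \eqref{e:inner_variation}.

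\textbf{Main obstacle.} Conceptually the argument is the standard inner-variation computation of \cite[Proposition 3.1]{DLS_Qvfr}; the only genuinely new point is the bookkeeping needed to reassemble the three piecewise contributions coming from $\Omega_+,\Omega_-,\Omega_0$ into a single expression. This is handled transparently by the convention of Section \ref{s:notation_spec}, which ensures that the formal expression $\sum_i\langle Du_i:Du_i\cdot D\varphi\rangle$ equals $\sum_i\langle Du_i^{\pm}:Du_i^{\pm}\cdot D\varphi\rangle$ on $\Omega_\pm$ and $Q\langle D(\etab\circ u):D(\etab\circ u)\cdot D\varphi\rangle$ on $\Omega_0$, so that the final identity holds in the desired unified form with no extra boundary terms between the three pieces.
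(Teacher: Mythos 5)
Your proposal is correct and follows exactly the approach the paper sketches: compose $u$ with the one-parameter family $\Phi_t=\mathrm{id}+t\varphi$, observe that the canonical decomposition of the domain for $u\circ\Phi_t$ is $\{\Phi_t^{-1}(\Omega_+),\Phi_t^{-1}(\Omega_-),\Phi_t^{-1}(\Omega_0)\}$, apply the classical chain rule of \cite[Proposition 2.8]{DLS_Qvfr} piecewise, change variables, Taylor-expand in $t$, differentiate at $t=0$ under the integral sign, and reassemble via the notational convention of Section \ref{s:notation_spec}. This is precisely the reduction to \cite[Proposition 3.1]{DLS_Qvfr} that the paper describes, and the expansion $|Du_iA_t|^2|\det A_t|^{-1}=|Du_i|^2+t(2\langle Du_i:Du_i\cdot D\varphi\rangle-|Du_i|^2\mathrm{div}\,\varphi)+O(t^2)$ together with the bound $|R_i(t,y)|\le C t^2|Du_i(y)|^2$ justifying dominated convergence is exactly the standard computation.
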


\subsection{Outer variations}\label{s:outer} Next consider a map $\psi \in C^\infty (\Omega\times \R^n, \R^n)$ such that $\psi (x,u)=0$ in a neighborhood of $\partial \Omega \times \mathbb R^n$ and which satisfies the growth conditions
\begin{equation}\label{e:growth}
|D_u \psi|\leq C < \infty \qquad \mbox{and}\qquad |\psi (x,u)| + |D_x \psi (x,u)| \leq C (1+|u|)
\end{equation}  
for some constant $C$. For each fixed $x$, consider the map 
\[
\Iqs \ni P = \sum_i \a{P_i} \mapsto \Psi (x, P) = \sum_i \a{P_i + \psi(x,P_i)}\, .
\]
Observe that if we consider the obvious induced map on $\Iqs \sqcup \Iqs$, the latter commutes with the equivalence relation defining $\Iqspec$ and thus
induces a corresponding map on $\Iqspec$ through $(P, \epsilon) \to \Psi (x, (P, \epsilon)) := (\Psi (x,P), \epsilon)$. 

Hence if $u$ takes values in $\Iqspec$, then we have a well defined map $x\mapsto \Psi (x, u(x))$, which we will denote by $\Psi (x,u) = u + \psi(x,u)$ and which in a neighborhood of $\partial \Omega$ agrees with 
$u$. We wish to show that $x\mapsto \Psi (x,u)\in W^{1,2} (\Omega,\Iqspec)$ when $u\in W^{1,2} (\Omega,\Iqspec)$ (and $\Omega$ is bounded). A possible procedure is the following: 
\begin{itemize}
\item When $u$ is Lipschitz, we observe that $\Psi (x,u)$ is also Lipschitz. Using Definition \ref{d:pieces} consider the sets
$\Omega_+$, $\Omega_-$ and $\Omega_0$ and observe that $(\Psi (x, u))^+ (x) = \Psi (x, u^+ (x))$ and $(\Psi (x, u))^- (x) = \Psi (x, u^- (x))$. In particular
\begin{equation}\label{e:composition_formula}
\Dir (\Psi (x,u)) = \int_{\Omega_+} |D \Psi (x, u^+ (x))|^2 + \int_{\Omega_-} |D \Psi (x, u^- (x))|^2 + Q \int_{\Omega_0} |D \Psi (x, \etab \circ u (x))|^2\, . 
\end{equation}
\item Using the chain rules of \cite[Proposition 1.12]{DLS_Qvfr} we see then easily that there is a constant $\hat C$ (depending only on
$m,n, Q$ and $C$ in \eqref{e:growth}) such that, if $u$ is Lipschitz, then
\[
\Dir (\Psi(x,u)) \leq \hat C \left(|\Omega| + \|u\|_{L^2} + \Dir (u)\right)\, .
\]
\item Using the analogue of \cite[Proposition 2.5]{DLS_Qvfr}, for a general map $u\in W^{1,2}$ we find a sequence of Lipschitz maps $u_k$ converging to $u$ in $L^2$ and with equibounded Dirichlet energy. The corresponding maps $x\mapsto \Psi (x, u_k)$ converge then to $\Psi (x,u)$ and have equibounded Dirichlet energy. We then conclude that $\Psi (x,u)\in W^{1,2}$. Next, considering \cite[Equation (2.9)]{DLS_Qvfr}, we can also observe that
the convergence is in fact strong in $W^{1,2}$ and thus \eqref{e:composition_formula} holds for a general $W^{1,2}$ map. 
\end{itemize}

We are now ready to define outer variations. Consider indeed a smooth $\psi$ which is supported in $\Omega'\times \mathbb R^n$ for some $\Omega'\ssubset \Omega$ and has the same properties and growth conditions as above and let 
$\Psi_t (x, u):= u + t\, \psi (x, u)$. Then, if $u$ is a $\Dir$-minimizer, $\Dir (\Psi_t (x,u))\geq \Dir (u)$ and we thus can write
\begin{equation}\label{e:OV}
0 = \left. \frac{d}{dt}\right|_{t=0} \Dir (\Psi_t (x,u))\, .
\end{equation}
Arguing as in the previous paragraph we then conclude the following analogue of the second part of \cite[Proposition 3.1]{DLS_Qvfr}.

\begin{proposition}\label{p:outer_variation}
Let $u\in W^{1,2} (\Omega, \Iqspec)$ be a $\Dir$-minimizer and assume $\psi \in C^\infty (\Omega\times \mathbb R^n, \R^n)$ is a vector field which vanishes identically in a neighborhood of $\partial \Omega \times \R^n$ and satisfies the growth conditions \eqref{e:growth} for some constant $C$. Then we have
\begin{equation}\label{e:outer_variation}
\int \sum_i \left( \langle Du_i : D_x\psi (x, u_i) \rangle + \langle Du_i : D_u \psi (x, u_i) \cdot Du_i \rangle\right) = 0\, .
\end{equation}
\end{proposition}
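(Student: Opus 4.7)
The identity \eqref{e:outer_variation} will be obtained as the vanishing first order variation of the Dirichlet energy along the one-parameter family $\Psi_t(x,u) := u + t\,\psi(x,u)$. The discussion preceding the statement of the proposition shows that, for every $t$ with $|t|$ sufficiently small, the map $x \mapsto \Psi_t(x,u(x))$ belongs to $W^{1,2}(\Omega, \Iqspec)$ and agrees with $u$ in a neighborhood of $\partial \Omega$; thus it is an admissible competitor. Since this is true for $t$ of either sign, minimality forces the function $t \mapsto \Dir(\Psi_t(\cdot,u))$ to attain a minimum at $t=0$, so the derivative in \eqref{e:OV} vanishes as soon as it exists.

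I would then evaluate this derivative explicitly through the splitting \eqref{e:composition_formula}:
\[
\Dir(\Psi_t(\cdot,u)) = \int_{\Omega_+}|D\Psi_t(\cdot,u^+)|^2 + \int_{\Omega_-}|D\Psi_t(\cdot,u^-)|^2 + Q\int_{\Omega_0}|D\Psi_t(\cdot,\etab\circ u)|^2.
\]
On $\Omega_+$, writing $u^+ = \sum_i \a{u_i^+}$, the chain rule \cite[Proposition 1.12]{DLS_Qvfr} for classical $\Iqs$-valued maps yields the pointwise representation
\[
D\Psi_t(x,u^+)(x) = \sum_i \a{Du_i^+(x) + t\bigl[D_x\psi(x,u_i^+) + D_u\psi(x,u_i^+)\cdot Du_i^+(x)\bigr]}.
\]
Squaring, expanding, and differentiating the resulting expression at $t=0$ produces exactly twice the integrand appearing in \eqref{e:outer_variation}, restricted to $\Omega_+$. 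The analogous computation applies on $\Omega_-$ with $u^-$ in place of $u^+$ and on $\Omega_0$, where all the sheets coincide with $\etab\circ u$ and the integrand is identical for each index $i$, thereby justifying the factor $Q$ in front of the third term. Adopting the notational convention of Section \ref{s:notation_spec}, summing the three contributions, and dividing by $2$ delivers \eqref{e:outer_variation}.

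The only non-routine point is justifying the interchange of derivative and integral. This I would settle by dominated convergence: the growth assumption \eqref{e:growth} gives the bound
\[
|D_x\psi(x,u_i)| + |D_u\psi(x,u_i)|\,|Du_i| \leq C\bigl(1 + |u_i| + |Du_i|\bigr),
\]
so that the difference quotient
\[
\frac{|D\Psi_t(\cdot,u_i)|^2 - |Du_i|^2}{t}
\]
is uniformly dominated, for $|t|\leq 1$, by a multiple of $|Du_i|\bigl(1+|u_i|+|Du_i|\bigr) + \bigl(1+|u_i|+|Du_i|\bigr)^2$, which lies in $L^1(\Omega)$ since $u\in W^{1,2}(\Omega,\Iqspec)$ (hence $|u|\in L^2$ by Sobolev embedding applied on the bounded domain $\Omega$, and $|Du|\in L^2$ by definition). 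No serious obstacle is expected beyond this standard domination argument; the main conceptual ingredient is precisely the splitting \eqref{e:composition_formula}, which reduces the special $Q$-valued outer variation to three instances of the classical $Q$-valued (or single-valued) chain rule.
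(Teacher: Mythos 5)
Your proof proposal is correct and follows essentially the same route as the paper: reduce to the splitting \eqref{e:composition_formula}, apply the classical $Q$-valued chain rule of \cite[Proposition 1.12]{DLS_Qvfr} on each of the three regions $\Omega_+,\Omega_-,\Omega_0$, and differentiate under the integral sign. The paper is terser (it simply says ``arguing as in the previous paragraph'' and refers to the second part of \cite[Proposition 3.1]{DLS_Qvfr}), whereas you have spelled out the dominated-convergence justification; that added detail is consistent with what the cited reference does and introduces nothing new in substance.
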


\subsection{Key identities} Arguing as in \cite[Section 3.1.2]{DLS_Qvfr} we test the identities \eqref{e:inner_variation}
and \eqref{e:outer_variation} with $\varphi$ and $\psi$ of the following special form: $\varphi (x) = \phi (|x|) x$ and
$\psi (x,u) = \phi (|x|) u$. If we let $\phi$ converge to the indicator function of the interval $[0,r[$ we then reach the following key identities.

\begin{proposition}\label{p:key_id}
Let $\Omega\subset \mathbb R^m$ be a bounded open set and let $u\in W^{1,2} (\Omega, \Iqspec)$ be a $\Dir$-minimizer. 
Then for a.e. $r\in [0, \dist (x, \partial \Omega)[$ the following equalities hold:
\begin{align}
(m-2) \int_{B_r (x)} |Du|^2 &= r\int_{\partial B_r (x)} |Du|^2 - 2 r\int_{\partial B_r (x)} |\partial_\nu u|^2\, ,\label{e:inner2}\\
\int_{B_r (x)} |Du|^2 &= \int_{\partial B_r (x)} \sum_i \langle \partial_\nu u_i, u_i\rangle\, , \label{e:outer} 
\end{align}
where $\nu$ denotes the outer unit normal to $\partial B_r (x)$ and $\sum_i \a{\partial_\nu u_i}$ is the multivalued map
$y\mapsto \sum_i \a{Du_i (u) \cdot \nu (y)}$.  
\end{proposition}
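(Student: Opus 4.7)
Fix a point $x_0 \in \Omega$ and a scalar cutoff $\phi \in C^\infty_c([0, \dist(x_0, \partial\Omega)))$. The idea is the one exploited in \cite[Section 3.1.2]{DLS_Qvfr} for classical $Q$-valued Dir-minimizers: use the radial vector field $\varphi(x) := \phi(|x-x_0|)(x-x_0) \in C^\infty_c(\Omega, \mathbb{R}^m)$ in the inner-variation identity \eqref{e:inner_variation}, and the radial perturbation $\psi(x,u) := \phi(|x-x_0|)\, u$ in the outer-variation identity \eqref{e:outer_variation}. Both are admissible: $\varphi$ is smooth and compactly supported in $\Omega$, while $\psi$ is smooth, vanishes in a neighborhood of $\partial\Omega \times \mathbb{R}^n$, and trivially satisfies the growth bounds \eqref{e:growth}. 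Moreover, since $\psi(x,u) = \phi(|x-x_0|)u$ is radial in $u$, the induced map on $\Iqs\sqcup\Iqs$ descends to $\Iqspec$ (it preserves the equivalence relation identifying collapsed points of opposite sign).

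The computations are then elementary: writing $\nu(x) := (x-x_0)/|x-x_0|$ and $\rho := |x-x_0|$,
\begin{align*}
D\varphi &= \phi(\rho)\,\mathrm{Id} + \phi'(\rho)\,\rho\, \nu\otimes\nu\,, & \dv\varphi &= m\,\phi(\rho) + \rho\,\phi'(\rho),\\
D_u\psi(x,u) &= \phi(\rho)\,\mathrm{Id}\,, & D_x\psi(x,u) &= \phi'(\rho)\, u\otimes \nu.
\end{align*}
Substituting into \eqref{e:inner_variation} and \eqref{e:outer_variation}, and invoking the notational convention of Section \ref{s:notation_spec} so that the sums $\sum_i$ are interpreted uniformly across $\Omega_+$, $\Omega_-$ and $\Omega_0$ (by Proposition \ref{p:app_differentiability}, $|Du|^2$, $|\partial_\nu u|^2$ and $\sum_i \langle \partial_\nu u_i, u_i\rangle$ are well defined a.e. regardless of which piece one is on), a straightforward algebraic simplification yields the weak-form identities
\begin{align*}
\int_\Omega \Bigl[(2-m)\,\phi(\rho)\,|Du|^2 + \rho\,\phi'(\rho)\bigl(2|\partial_\nu u|^2 - |Du|^2\bigr)\Bigr]\,dx &= 0,\\
\int_\Omega \Bigl[\phi(\rho)\,|Du|^2 + \phi'(\rho)\sum_i \langle \partial_\nu u_i, u_i\rangle\Bigr]\,dx &= 0.
\end{align*}

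Finally, choose the Lipschitz approximants $\phi_\varepsilon$ of $\chi_{[0,r)}$ given by $\phi_\varepsilon \equiv 1$ on $[0, r-\varepsilon]$, $\phi_\varepsilon$ affine on $[r-\varepsilon, r]$, $\phi_\varepsilon \equiv 0$ afterwards, so that $\phi'_\varepsilon = -\varepsilon^{-1}\chi_{[r-\varepsilon, r]}$. By a standard density argument, the weak-form identities persist for Lipschitz $\phi$. Using the coarea formula and Lebesgue's differentiation theorem, for a.e.\ $r \in (0, \dist(x_0, \partial\Omega))$ the quantities $\varepsilon^{-1}\int_{B_r\setminus B_{r-\varepsilon}}|Du|^2\,dx$ and $\varepsilon^{-1}\int_{B_r\setminus B_{r-\varepsilon}}\sum_i\langle\partial_\nu u_i, u_i\rangle\,dx$ converge to the corresponding boundary integrals on $\partial B_r$, and likewise for $|\partial_\nu u|^2$. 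Letting $\varepsilon\to 0$ gives precisely \eqref{e:inner2} and \eqref{e:outer}.

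\textbf{Main technical point.} The only real subtlety is checking that the outer-variation test $\psi(x,u) = \phi(\rho)u$ defines a legitimate perturbation of $\Iqspec$-valued maps: one must observe that this map rescales each atom by $1+t\phi(\rho)$, hence preserves the sign label $\epsilon$ and is compatible with the collapse identification $(Q\a{p},+1)\sim (Q\a{p},-1)$. Once this is granted, the decomposition of $\Omega$ into $\Omega_\pm, \Omega_0$ from Definition \ref{d:pieces} combined with Proposition \ref{p:app_differentiability} makes the computation formally identical to the classical $Q$-valued case, and no new difficulty arises.
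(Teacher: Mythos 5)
Your proof is correct and follows exactly the route the paper indicates: test the inner- and outer-variation identities (Propositions \ref{p:inner_variations} and \ref{p:outer_variation}) with $\varphi(x)=\phi(|x-x_0|)(x-x_0)$ and $\psi(x,u)=\phi(|x-x_0|)u$, simplify, and let $\phi$ increase to the indicator of $[0,r)$, which is precisely the argument of \cite[Section 3.1.2]{DLS_Qvfr} that the paper cites. Your remark that the radial rescaling $u\mapsto u+t\phi(\rho)u$ respects the $\Iqspec$ quotient is a fine sanity check, though it is already covered in full generality by the discussion in Section \ref{s:outer}, where the induced map $(P,\epsilon)\mapsto(\Psi(x,P),\epsilon)$ is shown to be compatible with the identification of collapsed points.
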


The proof of the proposition follows from the very same computations of \cite[Section 3.1.2]{DLS_Qvfr}.

\section{H\"older regularity of $\Dir$-minimizers}

In this section we show that $\Dir$-minimizers are H\"older continuous. In particular we will prove the following

\begin{theorem}\label{thm.interior Hoelder regularity}
There are constants $\alpha_0 (m,n,Q)>0$ and $C(m,n,Q)$ with the following property. Assume $u\in W^{1,2} (B_{2r} (x), \Iqspec)$ is a $\Dir$-minimizer. Then $u\in C^{0, \alpha_0}_{loc} (B_{2r}(x))$. Indeed we have the estimates
\begin{align}
& [u]_{\alpha_0, B_r (x)} \leq C r^{1-m/2 - \alpha_0} \left(\Dir (u, B_{2r} (x)\right)^{\frac{1}{2}}\label{e:C-alpha-est}\\
&\rho^{2-m-2\alpha_0} \Dir (u, B_\rho (x)) \leq (2r)^{2-m-2\alpha_0} \Dir (u, B_{2r} (x)) \qquad \forall\, 0<\rho < 2r\, .\label{eq.energy decay}
\end{align}
\end{theorem}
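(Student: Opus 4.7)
The plan is to adapt the classical argument for $\mathcal{A}_Q$-valued $\Dir$-minimizers (cf. \cite{DLS_Qvfr}) to our setting, using Proposition \ref{p:factorization} as a reduction device. First, I write $u = v \oplus z$ where $v := u \ominus \etab\circ u \in W^{1,2}(\Omega, \IQSn)$ and $z := \etab \circ u \in W^{1,2}(\Omega, \R^n)$. By Proposition \ref{p:factorization}, each factor independently minimizes its own Dirichlet energy, and \eqref{e:split_energy} gives $\Dir(u,\Omega) = \Dir(v,\Omega) + Q\Dir(z,\Omega)$. Since $z$ is classically harmonic and $|Dz|^2$ is subharmonic, the mean value property yields the stronger decay $\int_{B_\rho(x)} |Dz|^2 \leq (\rho/r)^m \int_{B_r(x)} |Dz|^2$. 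It therefore suffices to prove \eqref{eq.energy decay} for the $\IQSn$-valued minimizer $v$.

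Setting $D(\rho) := \Dir(v, B_\rho(x))$ and $H(\rho) := \int_{\partial B_\rho(x)} |v|^2$, the inner variation identity \eqref{e:inner2} reads $\rho D'(\rho) = (m-2) D(\rho) + 2\rho \int_{\partial B_\rho(x)}|\partial_\nu v|^2$, which immediately gives the (weak) monotonicity of $\rho^{2-m} D(\rho)$. Combining the outer variation identity \eqref{e:outer} with Cauchy--Schwarz yields $D(\rho)^2 \leq H(\rho) \int_{\partial B_\rho(x)} |\partial_\nu v|^2$, and a direct differentiation, using in addition the identity $H'(\rho) = (m-1)\rho^{-1} H(\rho) + 2D(\rho)$ (itself a consequence of \eqref{e:outer}), shows that Almgren's frequency function $N(\rho) := \rho D(\rho)/H(\rho)$ is monotone nondecreasing (the degenerate case $H \equiv 0$ reduces by minimality to $v \equiv Q\a{0}$, for which \eqref{eq.energy decay} is trivial). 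Improving the monotonicity exponent from $m-2$ to $m - 2 + 2\alpha_0$ amounts, via the two identities above, to a universal lower bound $N(\rho) \geq \alpha_0 > 0$ on the frequency at every scale.

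The technical heart of the argument, and the main obstacle, is to establish this universal frequency bound. The plan is to argue by contradiction and compactness: from a sequence of nontrivial $\IQSn$-valued minimizers whose frequencies at some scale tend to zero, a suitable rescaling produces via Proposition \ref{prop.compactness of minimizers} a nontrivial limit $v_\infty$ with identically vanishing frequency, which, by the equality case in Cauchy--Schwarz combined with the inner-variation formula, must be $0$-homogeneous. Exploiting the structure of $\IQSn$ as the union of two copies of $\Iqsn$ joined at the basepoint $Q\a{0}$, the analysis of such $v_\infty$ reduces on each hemisphere to that of classical $\Iqsn$-valued $\Dir$-minimizers, for which the known lower bound on the frequency at the origin (cf. \cite[Section 3.4]{DLS_Qvfr}) delivers the desired contradiction. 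Once $N \geq \alpha_0$ is in place, integrating $\rho D'(\rho) \geq (m - 2 + 2\alpha_0) D(\rho)$ gives \eqref{eq.energy decay} for $v$; combining with the stronger decay for $z$ produces \eqref{eq.energy decay} for $u$, and the H\"older estimate \eqref{e:C-alpha-est} follows from the $\Iqspec$-analogue of the Campanato--Morrey estimate \cite[Proposition 2.14]{DLS_Qvfr} listed after Definition \ref{d:Sobolev}.
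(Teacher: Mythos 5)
The reduction of the theorem to the $\IQSn$-valued factor $v := u \ominus \etab\circ u$ via Proposition \ref{p:factorization}, together with the final appeal to Campanato theory, is sound and matches the paper's opening moves. The gap lies in the ``technical heart'': the claimed universal lower bound $N(\rho) \geq \alpha_0 > 0$ on the frequency at every scale is false. Indeed, whenever the center $x$ lies in $\Omega_+ \cup \Omega_-$ (i.e.\ $v(x) \neq Q\a{0}$), the frequency tends to zero as $\rho \to 0$; this is exactly point (b) of the paper's Theorem \ref{t:frequency_monot}, and it holds already for ordinary harmonic functions that do not vanish at the center. So the decay \eqref{eq.energy decay}, which must hold for all centers, cannot be deduced from a pointwise frequency bound.

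Even at collapsed points the compactness scheme you sketch does not close. Normalizing $H_k(1)=1$ with $N_k(1) \to 0$ forces $\Dir(v_k, B_1) \to 0$, so the weak limit $v_\infty$ is a \emph{constant}; by $L^2$-convergence of traces it has $|v_\infty| > 0$, hence it is a perfectly legitimate ($0$-homogeneous) minimizer with vanishing frequency and no contradiction arises. To rule this out you would need $v_\infty(0) = Q\a{0}$, which requires pointwise/uniform convergence, i.e.\ precisely the Hölder regularity you are trying to prove — the argument is circular, and indeed in the paper the frequency lower bound \eqref{e:lower_bound_I} of Proposition \ref{p:frequency} is \emph{deduced from} Theorem \ref{thm.interior Hoelder regularity}, not the other way around.

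The paper's actual device is Proposition \ref{prop.energy boundary control}: the interior-to-boundary inequality $\Dir(u, B_1) \leq \tfrac{1}{m-2+2\alpha_0}\Dir(u|_{\partial B_1}, \partial B_1)$, comparing the interior energy to the \emph{tangential} boundary energy rather than to the $L^2$ norm of the trace. This inequality is uniform over all minimizers regardless of the value at the center, and it yields \eqref{eq.energy decay} directly since $\tfrac{d}{d\rho}\Dir(u,B_\rho) \geq \Dir(u|_{\partial B_\rho}, \partial B_\rho)$. Its proof proceeds by contradiction/compactness normalizing $\Dir(v_k,B_1)=1$ (not $H_k(1)=1$), uses Lemma \ref{l:0-lemma} on the constancy of $0$-homogeneous minimizers, and crucially splits into cases according to whether the measures of the two collapsed sets $\{|u_k^\pm|=0\}$ stay bounded away from zero or degenerate — in the degenerate case one replaces the vanishing negative sheet and invokes the classical boundary inequality for $\Iqs$-valued minimizers (\cite[Proposition 3.10]{DLS_Qvfr}). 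You should rebuild the argument along those lines; the frequency-function route cannot deliver the theorem as stated.
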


The estimate \eqref{eq.energy decay} gives a corresponding estimate for $\zetab\circ u$ and then \eqref{eq.energy decay} implies
\eqref{e:C-alpha-est} through the classical theory of Campanato spaces, cf. \cite[Proposition 3.7 \& Theorem 2.9]{giusti}. We therefore
focus our attention on \eqref{eq.energy decay}, which is a direct consequence of the following proposition.

\begin{proposition}\label{prop.energy boundary control}
There is a constant $\alpha_0 (m,n,Q) > 0$ such that the following inequality holds for every $u\in W^{1,2} (B_1 (x), \Iqspec)$:
\begin{equation}\label{eq.energy boundary control}
\Dir (u, B_1) \leq \frac{1}{m-2+2\alpha_0} \Dir (u|_{\partial B_1}, \partial B_1)\, .
\end{equation}
\end{proposition}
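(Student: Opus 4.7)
The plan is to argue by contradiction, combining the compactness of $\Dir$-minimizers (Proposition \ref{prop.compactness of minimizers}) with the outer variation identity \eqref{e:outer}. The statement has to be read with the implicit hypothesis that $u$ is $\Dir$-minimizing (for arbitrary $W^{1,2}$ maps, a counterexample with zero trace and non-trivial interior energy is immediate).

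Suppose no such $\alpha_0 > 0$ works; then there is a sequence $u_k$ of $\Dir$-minimizers with $(m-2+2/k)\,\Dir(u_k, B_1) > \Dir(u_k|_{\partial B_1}, \partial B_1)$. Using the cone structure of $\Iqspec$ to rescale (and, if needed, translating via $\ominus$ in $\R^n$ to control the mean), I normalize $\Dir(u_k, B_1) = 1$ so that $\Dir(u_k|_{\partial B_1}, \partial B_1) < m-2+2/k$, uniformly bounded. By Proposition \ref{prop.compactness of minimizers} applied with uniform control on the boundary Dirichlet energy, I extract a subsequential strong $W^{1,2}(B_1)$-limit $u_\infty$ which is again $\Dir$-minimizing, with $\Dir(u_\infty, B_1) = 1$ and $\Dir(u_\infty|_{\partial B_1}, \partial B_1) \leq m-2$ by lower semicontinuity.

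The core step is to reach a contradiction using a $0$-homogeneous competitor, split in two cases. When $m \geq 3$, I take $w(x) := u_\infty|_{\partial B_1}(x/|x|)$. Composing with the isometric embedding $\zetab$ of Theorem \ref{thm:embedding_and_retraction} reduces the $W^{1,2}$-regularity of $w$ and the Dirichlet-energy computation to the classical Euclidean fact
\[
\Dir(w, B_1) = \frac{1}{m-2}\,\Dir(u_\infty|_{\partial B_1}, \partial B_1) \leq 1.
\]
Since $w \in W^{1,2}(B_1, \Iqspec)$ shares a boundary trace with $u_\infty$, minimality of $u_\infty$ forces the chain of inequalities to be equalities, so that $w$ itself is a $\Dir$-minimizer with $\Dir(w, B_1) = 1$. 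Since $w$ is radially constant, however, $\partial_\nu w \equiv 0$, and applying the outer variation identity \eqref{e:outer} to $w$ yields
\[
\Dir(w, B_r) = \int_{\partial B_r} \sum_i \langle \partial_\nu w_i, w_i\rangle = 0
\]
for a.e.\ $r \in (0,1)$; monotonicity of $r \mapsto \Dir(w, B_r)$ then gives $\Dir(w, B_1) = 0$, contradicting $\Dir(w, B_1) = 1$. When $m = 2$, the assumption forces $\Dir(u_k|_{\partial B_1}) < 2/k \to 0$, so $u_\infty|_{\partial B_1}$ is constant; the constant map is then a competitor of zero Dirichlet energy, and minimality of $u_\infty$ directly contradicts $\Dir(u_\infty, B_1) = 1$.

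The main obstacle is to justify the $0$-homogeneous extension as a legitimate $\Iqspec$-valued Sobolev competitor with the expected $\frac{1}{m-2}$ energy ratio. This is handled cleanly via the isometry $\zetab$ of Theorem \ref{thm:embedding_and_retraction}: composition sends $w$ to the standard $0$-homogeneous extension of $\zetab \circ u_\infty|_{\partial B_1}$ in $\R^{\bar N}$, where both $W^{1,2}$-regularity (for $m \geq 3$) and the Dirichlet energy formula are classical. A secondary technical point is the normalization in the compactness step, which is delicate because $\Iqspec$ is only a cone and not a vector space; using the decomposition $\iso \circ u = (v,w,z)$ of Proposition \ref{prop.isometry} reduces it to a $W^{1,2}$ statement in a product of Euclidean target spaces. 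Once $w$ is identified as a $\Dir$-minimizer via the equality case of the minimality of $u_\infty$, the outer variation identity \eqref{e:outer} closes the argument in one line.
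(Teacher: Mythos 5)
The argument fails at the compactness step (Step 3), which is the crux of the whole proof for $m\ge 3$ and is not a "secondary technical point". To invoke Proposition~\ref{prop.compactness of minimizers} you need the normalized sequence $u_k$ to converge \emph{weakly} in $W^{1,2}(B_1,\Iqspec)$, and for that a uniform $L^2$-bound is required. Rescaling kills one degree of freedom and subtracting $\etab\circ u_k$ via $\ominus$ kills the $\mathbb R^n$-factor, but what remains is a pair of $\Iqsn$-valued maps $(v_k,w_k)$ with unit Dirichlet energy and $\etab\circ v_k\equiv\etab\circ w_k\equiv 0$, and this mean-zero condition does \emph{not} give $L^2$-control: already for $Q=2$ the map $v(x)=\a{a}+\a{-a}$ with $|a|$ huge has zero mean, zero energy and arbitrarily large $|v|$. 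Nor does passing through the isometric embedding $\zetab$ help: the image $\zetab_0(\Iqsn)\subset\R^{\bar N-n}$ is a fixed cone, so subtracting the Euclidean average of $\zetab_0\circ v_k$ takes you out of the image, and the retraction $\varrhob_0$ does not undo a translation. In short, $\Iqsn$ has no subtraction, and your remark that "the decomposition $\iso\circ u=(v,w,z)$ reduces it to a $W^{1,2}$ statement in a product of Euclidean target spaces" does not fix this.

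This is exactly why the paper's proof splits on $\beta_k:=\min\{|\{|u_k^+|=0\}|,\,|\{|u_k^-|=0\}|\}$. If $\liminf\beta_k>0$, both $|u_k^+|$ and $|u_k^-|$ vanish on sets of uniformly positive measure, a Poincar\'e inequality gives the $L^2$-bound, weak compactness applies, and one concludes via the $0$-homogeneous rigidity (Lemma~\ref{l:0-lemma}, which is what your radially constant competitor argument is re-proving in a slightly slicker way, bypassing \eqref{e:inner2}). But if $\beta_k\to 0$, \emph{no} convergent subsequence of $u_k$ need exist, and your limit $u_\infty$ simply may not be there. The paper treats this case by a completely different mechanism: the negative part $|u_k^-|$ is shown to vanish in $L^2$ (since its support shrinks and its gradient is bounded), the boundary data are replaced by purely positive $\Iqs$-valued data via the Luckhaus lemma, and the strict interior energy improvement from the classical $\Iqs$-theory (\cite[Proposition~3.10]{DLS_Qvfr}) yields the contradiction without ever passing to a limit. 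Your proof as written silently assumes you are always in the first case, which is a genuine gap. The same objection applies to your $m=2$ branch, which also passes through $u_\infty$; the paper instead proves $m=2$ directly by the harmonic extension of $\zetab\circ\tilde u$ composed with $\zetab^{-1}\circ\varrhob$, with no contradiction argument and no compactness at all.

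Two smaller points: first, the paper's statement of the proposition is indeed missing the hypothesis that $u$ be $\Dir$-minimizing — your reading is the correct one. Second, once you \emph{do} have $u_\infty$, your outer-variation argument for the $0$-homogeneous competitor is correct and is essentially Lemma~\ref{l:0-lemma} specialized to the homogeneous situation; the use of the isometry $\zetab$ to justify that $w$ is a legitimate $W^{1,2}$ competitor with the $\tfrac{1}{m-2}$ energy identity is also fine. So the post-compactness half of the argument is sound; it is the extraction of the limit that needs the case analysis you omitted.
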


Indeed, let $u$ be as in Theorem \ref{thm.interior Hoelder regularity} and set
\begin{align*}
I (\rho) &:= \Dir (u, B_\rho (x))\\
J (\rho) &:= \Dir (u|_{\partial B_\rho (x)}, \partial B_\rho (x))\, .
\end{align*}
Notice that, 
\[
I (\rho) = \int_0^\rho \int_{\partial B_\sigma (x)} |Du|^2\, d\sigma \geq \int_0^\rho J (\sigma)\, d\sigma\, .
\]
Moreover, by rescaling and translating, \eqref{eq.energy boundary control} gives
\[
I (\rho) \leq \frac{\rho}{m-2+2\alpha_0} J (\rho)\, .
\]
We thus conclude easily that $(r^{2-m-2\alpha_0} I (r))' \geq 0$, which obviously implies \eqref{eq.energy decay}. 

We split the proof of Proposition \ref{prop.energy boundary control} into two cases depending on the dimension of the domain, namely $m=2$ and $m>2$. In the case $m=2$ it suffices to prove the existence of a constant $C$ such that, if $\tilde{u}\in W^{1,2} (\partial B_1, \Iqspec)$, then we can find an extension $u$ of $\tilde{u}$ to $B_1$ satisfying the inequality
\[
\Dir (u, B_1) \leq C \Dir (\tilde{u}, \mathbb S^1)\, .
\]
The latter property is a classical fact for usual harmonic extensions of maps with values in the Euclidean space: in that case the constant $C$ can be taken to be $1$. For the $\Iqspec$ case we consider $\zetab \circ \tilde{u}$ and we then let $h$ be its harmonic extension to $B_1$. Setting $u := \zetab^{-1} \circ \varrhob \circ h$, the inequality is then an easy consequence of the estimate for the harmonic extension and the Lipschitz regularity of $\zetab^{-1}$ and $\varrhob$. The case $m\geq 3$ is harder and we need one important auxiliary result.

\subsection{$0$-homogeneous minimizers} The following lemma shows that $0$-homogeneous minimizers are necessarily constant.

\begin{lemma}\label{l:0-lemma}
Let $m\geq 3$ and let $u\in W^{1,2} (B_1, \Iqspec)$ be a $\Dir$-minimizer with the additional property that
\begin{equation}\label{e:0-lemma}
\Dir (u, B_1) \geq \frac{1}{m-2} \Dir (u|_{\partial B_1}, \mathbb S^{m-1})\, .
\end{equation}
Then $u$ is constant. 
\end{lemma}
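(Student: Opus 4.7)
The plan is a three-step argument: compare $u$ with the $0$-homogeneous extension of its boundary trace to force equality of Dirichlet energies; apply the outer variation identity with a radial test field to show the extension is constant; then conclude by minimality that $u$ itself is constant.

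First I set $g := u|_{\partial B_1}\in W^{1,2}(\partial B_1, \Iqspec)$ and define $v(x) := g(x/|x|)$ on $B_1$. Lifting via the embedding $\zetab$ of Theorem \ref{thm:embedding_and_retraction}, one checks that $v\in W^{1,2}(B_1, \Iqspec)$ (the assumption $m\geq 3$ is used here precisely to ensure that the radial integral $\int_0^1 \rho^{m-3}\,d\rho$ is finite), and a standard polar computation yields $\Dir(v, B_1) = \tfrac{1}{m-2}\,\Dir(g, \partial B_1)$. Since $v$ is an admissible competitor, minimality of $u$ gives $\Dir(u, B_1) \leq \Dir(v, B_1)$, while the hypothesis \eqref{e:0-lemma} forces the reverse inequality. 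Thus $\Dir(u, B_1) = \Dir(v, B_1)$, and $v$ is itself $\Dir$-minimizing on $B_1$ (every competitor for $u$ is also a competitor for $v$).

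Next I apply the outer variation identity \eqref{e:outer_variation} of Proposition \ref{p:outer_variation} to the minimizer $v$ with the test field $\psi(x, u) := F(|x|^2)\,u$, where $F \in C^\infty(\R)$ is compactly supported in $(0,1)$. This $\psi$ is smooth on $B_1\times \R^n$, vanishes in a neighborhood of $\partial B_1\times \R^n$, and trivially satisfies the growth conditions \eqref{e:growth}. Using the conventions of Section \ref{s:notation_spec} together with Proposition \ref{p:app_differentiability}, a direct unwinding reduces \eqref{e:outer_variation} to
\[
\int_{B_1}\bigl(F'(|x|^2)\,x\cdot\nabla|v|^2 + F(|x|^2)\,|Dv|^2\bigr)\,dx = 0,
\]
where $|v|^2 = \sum_i |v_i|^2 = \G_s(v, Q\a{0})^2$. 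Since $v$ is $0$-homogeneous, so is the scalar function $|v|^2$, and Euler's identity $x\cdot\nabla|v|^2 = 0$ a.e.\ on $B_1$ eliminates the first integrand. Hence $\int_{B_1} F(|x|^2)\,|Dv|^2 = 0$ for every admissible $F$; varying $F\geq 0$ over approximate indicators of arbitrary annuli compactly contained in $(0,1)$ forces $|Dv|^2 \equiv 0$ a.e.\ on $B_1$, so $v$ is constant.

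Once $v$ is constant, the trace $u|_{\partial B_1} = v|_{\partial B_1}$ is a constant $c_0\in\Iqspec$, and the constant map $w\equiv c_0$ is then a $W^{1,2}$ competitor for $u$ with zero Dirichlet energy; minimality of $u$ gives $\Dir(u, B_1)=0$, hence $u\equiv c_0$. The main technical hurdle I anticipate is the careful unwinding of the outer variation formula for an $\Iqspec$-valued map into the concise identity displayed above: the $\sum_i$ expressions must be interpreted piecewise on the canonical decomposition of Definition \ref{d:pieces}, and then combined via the pointwise identity $\sum_k v_{i,k}\partial_j v_{i,k} = \tfrac{1}{2}\partial_j|v_i|^2$ on each subregion. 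Once this bookkeeping is in place, the $0$-homogeneity of $v$ makes the remaining steps automatic.
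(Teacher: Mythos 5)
Your proposal is correct, but it takes a genuinely different route from the one in the paper. The paper's proof is more direct: it invokes the stationarity identities of Proposition~\ref{p:key_id} at the boundary radius $r=1$. Writing $\Dir(u|_{\partial B_1}, \mathbb S^{m-1}) = \int_{\partial B_1}(|Du|^2 - |\partial_\nu u|^2)$ and substituting the inner-variation identity \eqref{e:inner2} into the hypothesis \eqref{e:0-lemma} immediately yields $\int_{\partial B_1}|\partial_\nu u|^2 \leq 0$, hence $\partial_\nu u \equiv 0$ on $\partial B_1$; the outer-variation identity \eqref{e:outer} then gives $\Dir(u,B_1)=\int_{\partial B_1}\sum_i\langle\partial_\nu u_i,u_i\rangle = 0$. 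You instead construct the $0$-homogeneous extension $v$ of the trace, show by the exact energy formula $\Dir(v,B_1)=\tfrac{1}{m-2}\Dir(g,\partial B_1)$ together with minimality that $v$ is itself a minimizer, and then apply the outer variation (Proposition~\ref{p:outer_variation}) to $v$ with the radial test field $\psi(x,\cdot)=F(|x|^2)\,\cdot$, killing the first term via Euler's identity $x\cdot\nabla|v|^2=0$ and deducing $\Dir(v)=0$ by varying $F$. In effect, the role played in the paper by the inner-variation identity \eqref{e:inner2} at the boundary is replaced in your argument by the explicit $0$-homogeneity of the competitor $v$. What your version buys is that you never need the boundary form of the inner variation identity, and you work entirely in the interior where the a.e.\ identities of Proposition~\ref{p:key_id} are unproblematic; what it costs is the construction of the auxiliary minimizer $v$ and the need to verify $v\in W^{1,2}(B_1,\Iqspec)$ (which your remark about $m\geq 3$ correctly handles). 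Both proofs are sound; the paper's is a bit shorter once the boundary identities are accepted.
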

\begin{proof}
Observe that
\begin{equation}\label{e:boundary_Dir}
\Dir (u|_{\partial B_1}, \mathbb S^{m-1}) = \int_{\partial B_1} \left(|Du|^2 - |\partial_\nu u|^2\right)\, ,
\end{equation}
where $\nu$ denotes the outer unit normal to $\partial B_1$. Using \eqref{e:inner2}, \eqref{e:0-lemma} and \eqref{e:boundary_Dir}
we conclude 
\[
\int_{\partial B_1} |\partial_\nu u|^2 \leq 0\, ,
\]
namely that $\partial_\nu u$ vanishes identically on $\partial B_1$. But then \eqref{e:outer} implies that $\Dir (u, B_1) =0$, which clearly
gives the constancy of the function $u$. 
\end{proof}

\subsection{Proof of Proposition \ref{prop.energy boundary control} for $m \geq 3$} Consider first that, given the classical inequality for Euclidean valued harmonic functions, we can assume without loss of generality that the function $u$ takes values in $\IQSn$. In this case both $\Dir (u, B_1)$ and $\Dir (u|_{\partial B_1}, \partial B_1)$ can be split as 
\begin{align*}
\Dir (u, B_1) &= \Dir (u^+, B_1) + \Dir (u^-, B_1)\, ,\\
\Dir (u|_{\partial B_1}, \partial B_1) &= \Dir (u^+|_{\partial B_1}, \partial B_1) + \Dir (u^-|_{\partial B_1}, \partial B_1)\, .
\end{align*}
Assume now that the proposition is false and find a sequence of $\Dir$-minimizers $\{u_k\}\subset W^{1,2} (B_1, \IQSn)$ such that
\[
\Dir (u_k, B_1) \geq \frac{1}{m-2+(k+1)^{-1}} \Dir (u_k|_{\partial B_1}, \partial B_1)\, .
\]
After normalizing the maps we can assume that 
\[
\Dir (u_k, B_1) =1\, .
\]
We consider further the numbers
\[
\beta_k := \min \{|\{|u_k^+| = 0\}|, |\{|u_k^-|=0\}|\}\, 
\]
and, up to subsequences, we distinguish two cases: $\liminf_{k\to\infty} \beta_k >0$ and $\lim_{k\to\infty} \beta_k =0$.

\medskip

{\bf{First case.}} In this case we have the existence of a constant $\beta > 0$ such that
$|\{|u_k^+| = 0\}|\geq \beta$ and $|\{|u_k^-|=0\}|\geq \beta$ for every $k$. Since $|D|v||\leq |Dv|$ for any $Q$-valued map, we conclude from a classical variant of the Poincar\'e inequality that $\sup_k (\||u_k^+|\|_{L^2} + \||u_k^-|\|_{L^2}) < \infty$. Up to subsequences we can then assume that $u_k$ converges weakly in $W^{1,2}$ to some $u$ and the Proposition \ref{prop.compactness of minimizers} would imply that:
\begin{itemize}
\item $u$ is $\Dir$-minimizing;
\item $\Dir (u, B_1) = \lim_{k\to\infty} \Dir (u_k, B_1) =1$.
\end{itemize}
On the other hand, the semicontinuity of the Dirichlet energy would also imply that
\[
\Dir (u|_{\partial B_1}, \partial B_1) \leq \liminf_{k\to\infty} \Dir (u_k|_{\partial B_1}, \partial B_1) \leq m-2\, .
\]
So, according to Lemma \ref{l:0-lemma}, $u$ would have to be constant, which clearly is in contradiction with $\Dir (u, B_1) =1$.

\medskip

{\bf{Second case.}} In this case, again up to extraction of a subsequence, we can assume that $\lim_{k\to \infty} |\{|u_k^+|=0\}| = 0$. In turn
this implies that $\lim_{k\to \infty} |\{|u_k^-| > 0\}|=0$. In particular, since $\|D |u_k^-|\|_{L^2 (B_1)} \leq 1$, we get
$\lim_{k\to \infty} \||u_k^-|\|_{L^2 (B_1)} =0$. In turn this implies as well that $\left.|u_k^-|\right|_{\partial B_1}$ is bounded in $H^{1/2}$ and converges weakly to $0$ distributionally. Thus
\[
\lim_{k\to \infty} \int_{\partial B_1} |u_k^-|^2 = 0\, .
\]
Consider now the map $w_k : \partial B_1 \ni x \to (u_k^+ (x), 1)\in \Iqspec$ (where we have ``eliminated the negative part'' of $u_k$) and observe that
\[
\lim_{k\to \infty} \int_{\partial B_1} \mathcal{G}_s (w_k, u_k)^2 = 0\, .
\]
In particular, for $\lambda>0$ small (to be chosen later) use the Luckhaus Lemma \ref{lem.easyLuckhaus} to construct a function $h_k : B_1 \setminus B_{1-\lambda} \to \Iqspec$ with the properties that
\begin{itemize}
\item $h_k|_{\partial B_1} = u_k|_{\partial B_1}$;
\item $h_k (x) = w_k ((1-\lambda)^{-1} x)$ for every $x\in \partial B_{1-\lambda}$;
\item The following estimate holds with a constant $C$ independent of $\lambda$:
\[
\limsup_{k\to \infty} \Dir (h_k, B_1\setminus B_{1-\lambda}) \leq C \lambda\, .
\]
\end{itemize}
Now, we use \cite[Proposition 3.10]{DLS_Qvfr} to find a map $z_k\in W^{1,2} (B_{1-\lambda}, \Iqs)$ with the property that
\[
\Dir (z_k, B_{1-\lambda}) \leq \frac{1-\lambda}{m-2+\gamma} \Dir (u_k^+|_{\partial B_{1-\lambda}}, \partial B_{1-\lambda}) \leq \frac{(1-\lambda)^{m-2}(m-2+(k+1)^{-1})}{m-2+\gamma}\, ,
\]
where $\gamma = \gamma (m,n,Q)>0$. Clearly the map
\[
\hat{u}_k (x) = 
\begin{cases}
h_k (x) &\mbox{if $1\geq |x|\geq 1-\lambda$}\\
(z_k (x), 1)\in \Iqspec \qquad &\mbox{if $|x|\leq 1-\lambda$}
\end{cases}
\]
is in $W^{1,2} (B_1, \Iqspec)$ and has the same trace as $u_k$ on $\partial B_1$. By minimality 
\begin{align*}
1 &= \lim_{k\to \infty} \Dir (u_k, B_1) \leq \limsup_k \Dir (\hat{u}_k, B_1) \leq \frac{(1-\lambda)^{m-2}(m-2)}{m-2+\gamma} + C\lambda\, .
\end{align*}
Observe that $\lambda$ can be chosen arbitrarily small. On the other hand,
\[
\lim_{\lambda \to 0} \left(\frac{(1-\lambda)^{m-2}(m-2)}{m-2+\gamma} + C\lambda\right) = \frac{m-2}{m-2+\gamma} <1\, ,
\] 
which gives a contradiction, thus completing the proof of Proposition \ref{prop.energy boundary control}, and, in turn, of Theorem \ref{thm.interior Hoelder regularity}.

\section{Monotonicity of the frequency function}

As in the case of ``classical'' $Q$-valued functions, we introduce a suitable frequency function for maps taking values in $\Iqspec$.

\begin{definition}\label{d:fequency}
Let $\Omega\subset \R^m$ be an open set and consider a map $u\in W^{1,2} (\Omega, \Iqspec)$. For every $x\in \Omega$ and every $r\in ]0, \dist (x, \partial \Omega)[$ we define 
\begin{align*}
D_{x,u} (r) &:= \Dir (u, B_r (x))\\
H_{x,u} (r) &:= \int_{\partial B_r (x)} \G(u,Q\a{0})^2\, .
\end{align*}
Moreover, if $H_{x,u} (r) >0$, we define the frequency function
\[
I_{x,u} (r) :=\frac{r D_{x,u} (r)}{H_{x,u} (r)}\, .
\]
\end{definition}

If the point $x$ and the function $u$ are clear from the context, we will drop the subscripts from the corresponding quantities. In our context the celebrated monotonicity theorem of Almgren for the frequency function remains valid. 
More precisely we have the following theorem.

\begin{theorem}\label{t:frequency_monot}
Let $\Omega$ be a bounded open set and $u\in W^{1,2} (\Omega, \Iqspec)$ a $\Dir$-minimizing map. Fix a point $x\in \Omega$ and let $\rho:= \dist (x, \partial \Omega)$.
Then either $u^+ \equiv u^- \equiv Q \a{0}$ on $B_\rho (x)$ or 
$H_{x,u} (r) >0$ for every $r\in ]0, \rho[$ and in particular $I_{x,u} (r)$ is well defined.
Moreover, in the latter case:
\begin{itemize}
\item[(a)] The function $r\mapsto I_{x,u} (r)$ is monotone nondecreasing and therefore
\[
I_0 := \lim_{r\to 0} I_{x,u} (r)
\]
exists and is finite.
\item[(b)] $I_0 =0$ if and only if $\max \{ |u^+ (x)|, |u^- (x)|\} >0$.
\item[(c)] There is a positive constant $c_0 (m,n,Q)$ such that, if $u(x) = Q\a{0}$, then
$I_0 \geq c_0 (m,n,Q)$.
\item[(d)] The function $r\mapsto I_{x,u} (r)$ is constant if and only if $u|_{B_r (x)}$ is $I_0$-homogeneous, i.e. for each $y\in \mathbb S^{m-1}$ one of the following alternatives holds:
\begin{align}
&u (ry) = (u^+ (ry), 1) \qquad\mbox{and}\qquad u^+ (ry) = \sum_i \a{r^{I_0} u_i^+ (y)}\quad
&\forall r\in ]0, \rho[\, ,\\
&u (ry) = (u^- (ry), -1) \qquad \mbox{and}\qquad u^- (ry) = \sum_i \a{r^{I_0} u_i^- (y)}
&\forall r\in ]0, \rho[\, ,\\
&u(ry) = \left(Q \a{r^{I_0}\etab \circ u^+ (y)},1\right) =
\left( Q\a{r^{I_0} \etab \circ u^- (y)}, -1 \right) &\forall r\in ]0, \rho[\, .
\end{align}
\end{itemize}
\end{theorem}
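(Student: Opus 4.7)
The plan is to adapt the classical Almgren monotonicity argument used for $Q$-valued $\Dir$-minimizers in \cite[Section 3.4]{DLS_Qvfr} to the special setting, systematically exploiting the two first-variation identities in Proposition \ref{p:key_id} and the notational conventions of Section \ref{s:notation_spec}. Writing $D(r) := D_{x,u}(r)$ and $H(r) := H_{x,u}(r)$, I would first record the slicing/coarea formula $D'(r) = \int_{\partial B_r(x)} |Du|^2$ and, parametrizing $H(r) = r^{m-1}\int_{\mathbb S^{m-1}} |u(x+r\omega)|^2\,d\omega$, the companion formula
\[
H'(r) = \frac{m-1}{r} H(r) + 2\int_{\partial B_r(x)} \sum_i \langle u_i, \partial_\nu u_i\rangle = \frac{m-1}{r} H(r) + 2 D(r),
\]
where the last equality is the outer variation identity \eqref{e:outer}. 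The inner variation \eqref{e:inner2} rewrites as $D'(r) = \frac{m-2}{r} D(r) + 2\int_{\partial B_r(x)} |\partial_\nu u|^2$.

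For part (a), a direct logarithmic differentiation gives
\[
\frac{I'(r)}{I(r)} = \frac{2\int_{\partial B_r(x)} |\partial_\nu u|^2}{D(r)} - \frac{2 D(r)}{H(r)},
\]
and the outer variation $D(r) = \int_{\partial B_r(x)} \sum_i \langle u_i, \partial_\nu u_i\rangle$, together with a Cauchy--Schwarz applied summand-by-summand on each of $\Omega_+, \Omega_-, \Omega_0$ under the conventions of Section \ref{s:notation_spec}, yields $D(r)^2 \leq H(r)\cdot \int_{\partial B_r(x)} |\partial_\nu u|^2$. Hence $I'(r) \geq 0$ and the limit $I_0 = \lim_{r\downarrow 0} I(r) \in [0,\infty]$ exists; its finiteness follows from the Hölder bound in Theorem \ref{thm.interior Hoelder regularity}. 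For the dichotomy $H\equiv 0$ versus $H>0$ on $]0,\rho[$, the identity $H'(r) = \frac{m-1}{r}H(r) + 2D(r)$ gives
\[
\frac{d}{dr}\!\left(\frac{H(r)}{r^{m-1}}\right) = \frac{2D(r)}{r^{m-1}} \geq 0,
\]
so if $H(r_0) = 0$ then $D \equiv 0$ and $u\equiv Q\a{0}$ on $B_{r_0}(x)$; the energy decay \eqref{eq.energy decay} combined with the monotonicity of $H/r^{m-1}$ propagates this vanishing to the whole $B_\rho(x)$ by a standard unique continuation argument.

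For part (b), if $\max\{|u^+(x)|,|u^-(x)|\} > 0$ then Hölder continuity gives $H(r) \geq c r^{m-1}$ while $r D(r) = o(r^{m-1})$ (using \eqref{eq.energy decay}), so $I_0 = 0$; the reverse implication is the contrapositive of (c). Part (c) is the main obstacle: assuming $u(x) = Q\a{0}$ and $u \not\equiv Q\a{0}$, one needs a universal $c_0(m,n,Q) > 0$ with $I_0 \geq c_0$. The strategy is to prove a Poincaré-type inequality of the form $H(r) \leq C(m,n,Q)\, r \int_{\partial B_r(x)} |Du|^2$, after using $u(x) = Q\a{0}$ to control the ``mean value'' of the trace on $\partial B_r(x)$; combined with $H'(r) = (m-1)H(r)/r + 2D(r)$ this forces $r D(r) \geq c_0 H(r)$. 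The delicate point is that $\Iqspec$ is not linear and the decomposition $\Omega = \Omega_+\cup \Omega_-\cup \Omega_0$ depends on $u$, so the Poincaré inequality has to be formulated across the interfaces; the biLipschitz embedding $\zetab$ of Theorem \ref{thm:embedding_and_retraction}, combined with the fact that $\zetab(u(x)) = 0$ when $u(x) = Q\a{0}$, allows one to reduce to the standard Poincaré inequality for $\R^{\bar N}$-valued maps on $\partial B_r(x)$ and then transfer the estimate back via property (b) of $\zetab$.

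Finally, part (d) follows by inspecting the equality case in the Cauchy--Schwarz used in (a): $I'(r) \equiv 0$ on an interval forces both $D(r)^2 = H(r)\int_{\partial B_r(x)}|\partial_\nu u|^2$ and $\partial_\nu u_i(y) = \lambda(r)\, u_i(y)$ for $\mathcal{H}^{m-1}$-a.e.\ $y\in \partial B_r(x)$, with $\lambda$ depending only on $r$ (and where $\partial_\nu u_i$, $u_i$ are interpreted as in Section \ref{s:notation_spec} on each of $\Omega_+, \Omega_-, \Omega_0$). The constancy of $I(r) = I_0$ identifies $r\lambda(r) \equiv I_0$, i.e.\ $\lambda(r) = I_0/r$, and integrating the radial ODE $\partial_r u_i = (I_0/r)\, u_i$ on the respective regions produces the three alternative $I_0$-homogeneous forms in the statement; the fact that the sign $\epsilon$ is locally constant on the radii contained in $\Omega_+$ or $\Omega_-$ (by the openness of these sets, cf.\ Corollary \ref{c:separate_Lip}) guarantees that only one of the three alternatives occurs along each ray through $x$.
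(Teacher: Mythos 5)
Your overall plan—deriving monotonicity from the inner/outer variation identities, then extracting (b), (c), (d) from the behaviour of $D$ and $H$—is the same as the paper's, and parts (a), (b), (d) are essentially correct (for (d), the fact that each ray lies in exactly one of $\Omega_+$, $\Omega_-$, $\Omega_0$ follows not from local constancy of the sign along rays but from the $I_0$-homogeneity of $u^+$, $u^-$ and $\etab\circ u$, which makes each of these three sets a cone; you should make this explicit). There are, however, two genuine gaps.

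The first concerns the dichotomy. From $(H/r^{m-1})' = 2D/r^{m-1} \ge 0$ you only get that $H(r_0)=0$ forces $H\equiv 0$ on $]0,r_0]$ and hence $u\equiv Q\a{0}$ on $B_{r_0}(x)$; nothing so far prevents $H>0$ on $]r_0,\rho[$. Invoking ``a standard unique continuation argument'' leaves exactly the hard step open. The paper closes this by working on any maximal interval $]r_0,\rho'[$ where $H>0$: there $I$ is monotone, so $D(r)\le I(\rho')H(r)/r$, which via \eqref{e:H'} gives $H'(r)\le \frac{m-1+2I(\rho')}{r}H(r)$, and Gronwall starting from $H(r_0)=0$ yields $H\equiv 0$ on the interval, a contradiction. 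This Gronwall step is what actually constitutes the unique-continuation mechanism here and must be spelled out.

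The second, more serious gap is in part (c). Your plan is to prove $H(r)\le C\,r\int_{\partial B_r(x)}|Du|^2$ via the biLipschitz embedding $\zetab$ of Theorem \ref{thm:embedding_and_retraction}, then to conclude $rD(r)\gtrsim H(r)$. This does not work as stated, for two reasons. First, a Poincaré inequality on $\partial B_r(x)$ controls $\int_{\partial B_r}\mathcal{G}_s(u,\bar u)^2$ by the tangential energy for some sphere average $\bar u$, but it gives no control on $|\bar u|$; and the pointwise fact $u(x)=Q\a{0}$ at the center is useless for a general $W^{1,2}$ map, since one can make the trace on $\partial B_r$ have unit $L^2$ norm while $u(x)=Q\a{0}$ and $\Dir(u,B_r)$ is arbitrarily small. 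What rescues the estimate is precisely the interior Hölder regularity of $\Dir$-minimizers, i.e.\ Theorem \ref{thm.interior Hoelder regularity}, which your argument never invokes at this stage. Second, even with the Hölder estimate, the naive application on $B_r$ only yields $H(r)\le C\,r\,D(2r)$ (because \eqref{e:C-alpha-est} controls the oscillation on $B_r$ in terms of $\Dir(u,B_{2r})$), and the passage from $D(2r)$ to $D(r)$ is not automatic. The paper handles this by applying the Hölder estimate only on $B_{r/2}$ to get $\int_{\partial B_1}|u(sx)|^2\,dx\le C\,D(1)$ for $s\le 1/2$, and then propagating this bound to $s\in[1/2,1]$ through the differential inequality $\phi'(s)\le 2\int_{\partial B_1}|u(sx)||Du(sx)|\,dx$ and an absorption via Young's inequality, arriving at $H(1)\le C\,D(1)$ and hence $H(r)\le C_0\, r\, D(r)$ by scaling. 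You should replace your Poincaré strategy by this Hölder-plus-differential-inequality argument (which is \eqref{e:lower_bound_I} in Proposition \ref{p:frequency}).
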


The theorem follows from some important identities which we summarize in the following 
proposition.

\begin{proposition}\label{p:frequency}
Let $\Omega, x,\rho$ and $u$ be as in Theorem \ref{t:frequency_monot}. Then the maps
$r\mapsto H(r), D(r)$ are both absolutely continuous and the following identities hold for a.e.
$r\in ]0,\rho[$:
\begin{align}
D' (r) &= \frac{m-2}{r} D(r) +2 \int_{\partial B_r (x)} |\partial_\nu u|^2\label{e:D'}\\
H' (r) &= \frac{m-1}{r} H(r) + 2 D(r)\, .\label{e:H'}
\end{align}
Moreover there is a constant $C_0 (m,n,Q)$ such that, if $u(x) = Q \a{0}$, then
\begin{equation}\label{e:lower_bound_I}
H (r) \leq C_0 r D(r) \qquad \forall r \in ]0, \rho[\, .
\end{equation}
\end{proposition}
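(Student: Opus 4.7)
The plan is to reduce the two derivative identities to scalar computations via the isometric embedding $\zetab$ of Theorem \ref{thm:embedding_and_retraction}, and to derive \eqref{e:lower_bound_I} from an elementary ODE argument that uses the H\"older regularity of $\Dir$-minimizers from Theorem \ref{thm.interior Hoelder regularity}. The absolute continuity of $r \mapsto D(r)$ and the identity $D'(r) = \int_{\partial B_r(x)} |Du|^2$ for a.e.\ $r$ are immediate from the coarea formula applied to the locally integrable $|Du|^2$. Substituting this into the inner variation identity \eqref{e:inner2} and solving for $D'(r)$ produces \eqref{e:D'} at once.

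For $H$, I would set $\phi := \zetab \circ u$, which by Theorem \ref{thm:embedding_and_retraction} belongs to $W^{1,2}_{\mathrm{loc}}(\Omega, \R^{\bar N})$, satisfies $|\phi(y)|^2 = |u(y)|^2$ pointwise and $|D\phi|^2 = |Du|^2$ a.e. Since $|\phi|^2 \in W^{1,1}_{\mathrm{loc}}$ with weak gradient $2\phi \cdot D\phi$, the standard polar-coordinate computation applied to $H(r) = r^{m-1} \int_{\mathbb S^{m-1}} |\phi(x+r\omega)|^2 \, d\omega$ yields both the absolute continuity of $H$ and the formula
\[
H'(r) = \frac{m-1}{r}\, H(r) + 2 \int_{\partial B_r(x)} \phi \cdot \partial_\nu \phi \qquad \text{for a.e. } r.
\]
To identify the boundary integral with $D(r)$, I would work separately on the three canonical subdomains of Definition \ref{d:pieces}: on $\Omega_+$, $\Omega_-$ and $\Omega_0$ the function $|\phi|^2 = |u|^2$ coincides pointwise with $\sum_i |u^+_i|^2$, $\sum_i |u^-_i|^2$ and $Q\,|\etab \circ u|^2$ respectively, so the chain rule of \cite[Proposition 2.8]{DLS_Qvfr} applied region by region gives $2\phi \cdot \partial_\nu \phi = 2\sum_i \langle \partial_\nu u_i, u_i\rangle$ a.e., with the right hand side interpreted via the convention of Section \ref{s:notation_spec}. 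Integrating and invoking the outer variation identity \eqref{e:outer} then yields $\int_{\partial B_r(x)} \phi \cdot \partial_\nu \phi = D(r)$, and hence \eqref{e:H'}.

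For \eqref{e:lower_bound_I}, assume $u(x) = Q\a{0}$. Part (c) of Theorem \ref{thm:embedding_and_retraction} forces $\phi(x) = 0$, and the H\"older estimate \eqref{e:C-alpha-est} gives $|\phi(y)| \leq C\, \dist(y,x)^{\alpha_0}$, so that $r^{1-m} H(r) \leq C r^{2\alpha_0} \to 0$ as $r \downarrow 0$. Rewriting \eqref{e:H'} as $(r^{1-m} H(r))' = 2\, r^{1-m} D(r)$ and integrating from $0$ to $r$ yields
\[
H(r) = 2\, r^{m-1} \int_0^r s^{1-m}\, D(s)\, ds\,.
\]
Applying \eqref{eq.energy decay} to $u$ restricted to each ball $B_r(x) \subset B_\rho(x)$ gives $D(s) \leq (s/r)^{m-2+2\alpha_0}\, D(r)$ for $0 < s < r$, and the explicit bound
\[
\int_0^r s^{1-m}\, D(s)\, ds \leq r^{2-m-2\alpha_0}\, D(r) \int_0^r s^{2\alpha_0 - 1}\, ds = \frac{r^{2-m}}{2\alpha_0}\, D(r)
\]
produces $H(r) \leq \alpha_0^{-1}\, r\, D(r)$, which is \eqref{e:lower_bound_I} with $C_0 = \alpha_0^{-1}$.

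The most delicate step I anticipate is the piecewise identification $2\phi \cdot \partial_\nu \phi = 2\sum_i \langle \partial_\nu u_i, u_i\rangle$ on each of the three canonical regions: it requires unpacking the structure of $\iso$ from Proposition \ref{prop.isometry} and matching the $W^{1,2}$ chain rule on the embedded side with the $\Iqs$-valued chain rule of \cite{DLS_Qvfr} on the intrinsic side. Once this identification is in place, the rest of the argument reduces to classical calculus applied piecewise, combined with the variation identities of Proposition \ref{p:key_id}.
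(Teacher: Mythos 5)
Your treatment of \eqref{e:D'} and \eqref{e:H'} agrees with the paper's in substance: the paper differentiates the scalar $|u|^2$ in polar coordinates and invokes the intrinsic chain rule of \cite[Section 1.3.1]{DLS_Qvfr}, whereas you route the same computation through the embedding $\zetab$ using $|\zetab\circ u|=|u|$ and $|D(\zetab\circ u)|=|Du|$; the piecewise identification $\phi\cdot\partial_\nu\phi=\sum_i\langle\partial_\nu u_i,u_i\rangle$ is exactly that chain rule region by region, so the two arguments are the same calculation in different coordinates. Your proof of \eqref{e:lower_bound_I}, on the other hand, is genuinely different from the paper's. The paper normalizes $r=1$ and runs an absorption argument on $M:=\max_{s\in[1/2,1]}\int_{\partial B_1}|u(sx)|^2\,dx$: the H\"older estimate controls the value at $s=1/2$, a fundamental-theorem-of-calculus estimate plus Cauchy--Schwarz gives $M\le C\,M^{1/2}D(1)^{1/2}+C\,D(1)$, and $M$ is absorbed to conclude $H(1)\le M\le C\,D(1)$. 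You instead integrate the ODE $(r^{1-m}H(r))'=2\,r^{1-m}D(r)$ that \eqref{e:H'} provides, kill the boundary term at $0$ via the H\"older estimate, and then feed the decay \eqref{eq.energy decay} into the resulting integral to obtain the explicit constant $C_0=\alpha_0^{-1}$. Your route is more transparent and yields a cleaner, quantified constant, though it relies on having already established \eqref{e:H'} and on checking convergence of the improper integral $\int_0^r s^{1-m}D(s)\,ds$ (which you justify via the energy decay); the paper's argument is logically independent of \eqref{e:H'}. Both are correct and both tie the constant solely to $\alpha_0(m,n,Q)$.
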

\begin{proof} Without loss of generality we can assume $x=0$.
The absolute continuity of the map $r\mapsto D(r)$ is an obvious consequence of the absolute continuity of integrals. Passing in polar coordinates we easily see that
\[
D' (r) = \int_{\partial B_r} |Du|^2
\]
for a.e. $r\in ]0, \rho[$. The identity \eqref{e:D'} is then an obvious consequence of \eqref{e:inner2}. 

Next consider a classical Sobolev $f$ and let us write 
\[
\int_{\partial B_r} f^2 = r^{m-1} \int_{\partial B_1} f^2 (rx)\, dx
\]
Differentiating in $r$ we get the distributional identity
\[
\frac{d}{dr} \int_{\partial B_r} f^2 = \frac{m-1}{r} \int_{\partial B_r} f^2 + \int_{\partial B_r} \langle \nabla f^2 (x), r^{-1} x \rangle\, dx\, ,
\]
which easily shows the absolute continuity of the function. We apply the latter identity with $f=|u|$ and use the chain rule formulas analogous to \cite[Section 1.3.1]{DLS_Qvfr}
to then derive 
\[
H' (r) = \frac{m-1}{r} H (r) + 2 \int_{\partial B_r} \sum_i \langle \partial_\nu u_i, u_i\rangle \, . 
\]
The identity \eqref{e:H'} is then a consequence of \eqref{e:outer}.

Finally, in order to show \eqref{e:lower_bound_I} observe first that we can assume, without loss of
generality, $r=1$. We then use the interior H\"older regularity Theorem \ref{thm.interior Hoelder regularity} to derive 
\[
\int_{\partial B_1} |u (sx)|^2\, dx \leq C \Dir (u, B_1)\, \text{ for all } s\in \left[0, \frac12\right]\,.
\]
Next differentiating the function  $s\mapsto \int_{\partial B_1} |u(sx)|^2\,dx$ and integrating in $s\in \left[\frac12,1\right]$ we easily conclude
\[
M := \max_{s\in [1/2,1]} \int_{\partial B_1} |u (sx)|^2 \leq C \int_{1/2}^1 \int_{\partial B_s} |u||Du|\, ds + C\, \Dir (u, B_1)\, . 
\]
In particular we derive 
\[
M \leq C M^{\frac{1}{2}} (\Dir (u, B_1))^{\frac{1}{2}} + C \Dir (u, B_1) \leq \frac{M}{2} + C\, \Dir (u, B_1)\, ,
\]
from which $M\leq C \Dir (u, B_1) = D(1)$ easily follows. Since $H(1) \leq M$, this gives the desired bound. 
\end{proof}

\begin{proof}[Proof of Theorem \ref{t:frequency_monot}]
Without loss of generality we can assume $x=0$ and $\rho=1$.
First of all, if $H(1) =0$, then clearly the map identically equal to $Q  \a{0}$ in $B_1$ is a competitor, hence by minimality it has to be $D(1) =0$ and thus $u\equiv Q\a{0}$ in $B_1$.  Hence, let us consider the case
$H(1) >0$. By Proposition \ref{p:frequency}, $H$ will be positive in a neighborhood of $1$ and thus we can consider the smallest $r_0<1$ for which $H > 0$ on $]r_0, 1[$. On such interval we can differentiate in $r$ and, using the identities \eqref{e:D'}, \eqref{e:H'} and \eqref{e:outer} compute
\begin{equation}\label{e:I'>0}
I' (r) = \frac{2\,r}{H(r)^2} \left[ \int_{\partial B_r} |\partial_\nu u|^2 \int_{\partial B_r} |u|^2
- \left(\int_{\partial B_r} \sum_i \langle \partial_\nu u_i, u_i\rangle \right)^2\right]\geq 0\, .
\end{equation}
In particular we conclude that $I$ is monotone on $]r_0,1[$ and so
\[
H(r) \geq \frac{r}{I (1)} D (r)\, . 
\]
Now, if it were $r_0 > 0$, then we would have $H (r_0) =0$ and, by \eqref{e:H'}
\[
H' (r) \leq \frac{m-1 + I (1)}{r} H (r) \qquad \mbox{for a.e. $r\in ]r_0, 1[$.}
\]
But then the usual Gronwall's lemma would imply that $H$ vanishes on $]r_0, 1[$, which is a contradiction.

We have thus proved the first claim of the theorem, namely that $H>0$ in $]0,1[$ under the assumption that $u$ is nontrivial in $B_1$. Moreover \eqref{e:I'>0} shows (a). (c) is now an obvious consequence of \eqref{e:lower_bound_I}, which in turn shows that $I_0=0$ implies $u(0) \neq Q\a{0}$. Now, if $u(0)\neq Q\a{0}$, namely $|u (0)|>0$, then by Theorem \ref{thm.interior Hoelder regularity} we have that 
\[
\lim_{r\to 0} \frac{H(r)}{r^{m-1}} = \Ha^{m-1}(\partial B_1) \, |u (0)|^2 >0\, .
\]
On the other hand by Theorem \ref{thm.interior Hoelder regularity} we have $D(r) \leq r^{m-2+2\alpha_0} D(1)$. Combining these two facts we then discover that $\lim_{r\to 0} I(r) =0$.

We finally come to (d). If $u$ is $I_0$-homogeneous, then the usual chain rules imply 
that $\partial_\nu u_i (x) = I_0 |x|^{-1} u_i (x)$ for a.e. $x$ and so we conclude that $I'$ is identically $0$. On the other hand if  $I' \equiv 0$, recalling that $H (r) >0$, we conclude the existence of a function $\lambda (r)$ such that then $\partial_\nu u_i (x) =\lambda (r) u_i (x)$ holds for a.e. $r$ and a.e. $x\in \partial B_r$.  On the other hand, this would imply
\[
I_0 = I (r) \stackrel{\eqref{e:outer}}{=} \frac{r\int_{\partial B_r} \sum_i \langle \partial_\nu u_i, u_i \rangle}{H(r)} = r \lambda (r)\, . 
\]
Hence we have 
\[
\partial_\nu u_i (x) =\frac{I_0}{|x|} u_i (x) \qquad \mbox{for a.e. $x\in B_1$.}  
\]
In particular the same identity holds for $u^+$ a.e. on $B_1^+$, for $u^-$ a.e. on $B_1^-$ and for the classical function $\etab \circ u$ everywhere. Since however $u^+= Q\a{\etab\circ u }$ on  $B_1\setminus B_1^+$, and $u^- (x) = Q \a{\etab\circ u}$ on $B_1\setminus B_1^-$, we conclude that the same identity actually holds a.e. on $B_1$ for all the three functions $u^+, u^-$ and $\etab \circ u$. In particular, by the argument given in \cite[Section 3.4.2]{DLS_Qvfr}, we conclude that all of them are $I_0$-homogeneous. This in turn implies (d) and completes the proof. 
\end{proof}

\section{Blow-up and estimate of the singular set}

\begin{definition}\label{d:regular_singular}
Given a $\Dir$-minimizer $u\in W^{1,2} (\Omega, \Iqspec)$, we say that a point $x\in \Omega$ is regular if there is a neighborhood $U$ of $x$ such that 
\begin{itemize}
\item[(a)] $u$ coincides with $(u^+, 1)$ in $U$ and $x$ is a regular point for the $\Dir$-minimizer $u^+\in W^{1,2} (U, \Iqs)$;
\item[(b)] Or $u$ coincides with $(u^-, -1)$ in $U$ and $x$ is a regular point for the $\Dir$-minimizer $u^-\in W^{1,2} (U, \Iqs)$.
\end{itemize}
The set of regular points will be denoted by $\reg (u)$, whereas its complement, the set of singular
points, will be denoted by $\sing (u)$. 
\end{definition}
Note that (a) and (b) are not mutually exclusive: they can both hold, in which case both $u^+$ and $u^-$ coincide with $Q \a{\etab\circ u}$ in $U$. 

\begin{theorem}\label{t:est_sing_Dir-min}
Let $\Omega\subset \mathbb R^m$ be a bounded open set, and let $u\in W^{1,2} (\Omega, \Iqspec)$ be a $\Dir$-minimizer. Then the Hausdorff dimension of $\sing (u)$ is at most $m-1$.
\end{theorem}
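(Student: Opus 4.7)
The plan is to combine the classical partial regularity theory for $\Iqs$-valued $\Dir$-minimizers with a Federer-style dimension reduction driven by the frequency monotonicity of Theorem \ref{t:frequency_monot}. By Proposition \ref{p:factorization}, replacing $u$ with $u \ominus \etab \circ u$ does not alter $\sing(u)$, so I may assume throughout that $\etab \circ u \equiv 0$ and hence $u$ takes values in $\IQSn$.

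First I would split $\sing (u) = S_1 \cup S_2$, with $S_1 := \sing (u) \cap (\Omega_+ \cup \Omega_-)$ and $S_2 := \sing (u) \cap \Omega_0$. On the open sets $\Omega_+$ and $\Omega_-$, the H\"older continuity of Theorem \ref{thm.interior Hoelder regularity} shows that $u$ locally agrees with $(u^\pm, \pm 1)$; a cut-and-paste argument using the definition of $\Dir$-minimality for special $Q$-valued maps then forces $u^\pm$ to be classically $\Dir$-minimizing on these regions, and hence the theory in \cite{DLS_Qvfr} gives $\dim_{\Ha}(S_1) \leq m - 2$. On the other hand, a point in the interior of $\Omega_0$ is trivially regular (under the normalization $\etab \circ u \equiv 0$, $u$ is identically $Q\a{0}$ on such a neighborhood), so the residual set $S_2$ is contained in $\partial \Omega_0$ and satisfies $u(x) = Q\a{0}$ for every $x \in S_2$.

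At any $x \in S_2$, Theorem \ref{t:frequency_monot}(c) guarantees $I_0 := I_{x,u} (0^+) \geq c_0 > 0$. I would construct intrinsic blow-ups by
\[
u_{x,r} (y) := (\zetab^{-1} \circ \varrhob ) \Bigl( r^{(m-1)/2}\, H_{x,u}(r)^{-1/2}\, (\zetab \circ u) (x + r y) \Bigr),
\]
so that properties (a)--(c) of Theorem \ref{thm:embedding_and_retraction} and the cone identity \eqref{e:conical} ensure that $u_{x,r}$ is again a $\Dir$-minimizer with $\int_{\partial B_1} |u_{x,r}|^2 = 1$. Proposition \ref{prop.compactness of minimizers}, together with the monotonicity of the frequency and the rigidity statement Theorem \ref{t:frequency_monot}(d), then yields a subsequential strong $W^{1,2}_{\rm loc}$-limit $v$, which is a non-trivial $I_0$-homogeneous $\Dir$-minimizer on $\R^m$ with $v(0) = Q\a{0}$.

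The final step is a Federer dimension reduction. Define $\cS^k \subseteq S_2$ as the set of $x$ such that every tangent map at $x$ admits a translation-invariance subspace of dimension at most $k$. An inductive argument---using that tangent maps of tangent maps are tangent maps, and that a map invariant under an $\ell$-dimensional subspace effectively descends to a $\Dir$-minimizer on $\R^{m-\ell}$---yields $\dim_{\Ha}(\cS^k) \leq k$. Now a non-trivial $I_0$-homogeneous $\Dir$-minimizer with $I_0 > 0$ and $v(0) = Q\a{0}$ cannot be invariant under a full $m$-dimensional translation subspace, for otherwise it would be constantly $Q\a{0}$ and violate the normalization $\int_{\partial B_1} |v|^2 = 1$; hence $S_2 \subseteq \cS^{m-1}$ and $\dim_{\Ha}(S_2) \leq m-1$, which combined with the bound on $S_1$ concludes the proof. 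The principal obstacle is the intrinsic blow-up analysis in $\Iqspec$ and the rigorous execution of the Federer reduction in a target that is neither linear nor a classical metric cone; the embedding-retraction machinery of Theorem \ref{thm:embedding_and_retraction}, together with the conical scaling \eqref{e:conical} and the strong compactness of Proposition \ref{prop.compactness of minimizers}, is precisely what reduces the argument to the classical $\Iqs$-valued template.
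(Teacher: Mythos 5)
Your proposal is correct and takes essentially the same approach as the paper: reduce to $\etab\circ u \equiv 0$ via Proposition \ref{p:factorization}, dispose of $\Omega_+\cup\Omega_-$ using the classical $\Iqs$ theory of \cite{DLS_Qvfr}, and bound the dimension of the collapsed set by combining the frequency monotonicity of Theorem \ref{t:frequency_monot}, the compactness of Proposition \ref{prop.compactness of minimizers}, and a Federer-type dimension reduction on tangent maps. The packaging you use (Almgren stratification of $S_2$, blow-ups normalized by $H_{x,u}(r)^{-1/2}$) is a cosmetic variant of the paper's argument, which instead normalizes by $r^{-I_0}$ and runs the dimension reduction directly on the full set $\Omega_0$ in Proposition \ref{p:est_sing_Dir-min}, using connectedness of $\Omega$ to dispatch the degenerate case $\Omega_0=\Omega$.
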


First of all observe that, by continuity, both $\Omega_+$ and $\Omega_-$ are open sets. Moreover, in the respective sets $u^+$ and $u^-$ are minimizers taking values in $\Iqs$. Since $\sing (u)\cap \Omega^\pm = \sing (u^\pm|_{\Omega^\pm})$, we easily conclude from
\cite[Theorem 0.11]{DLS_Qvfr} that the dimension of $\sing (u) \cap (\Omega_+\cup \Omega_-)$
is at most $m-2$. It remains to study $\sing (u) \cap \Omega_0$. On the other hand since $u = Q\a{\etab \circ u}$ on $\Omega_0$ it follows immediately that $\reg (u) \cap \Omega_0$ consists of the interior of $\Omega_0$. Thus the theorem will follow immediately from the following

\begin{proposition}\label{p:est_sing_Dir-min}
Consider a connected bounded open set $\Omega\subset \mathbb R^m$ and let $u \in W^{1,2}(\Omega, \Iqspec)$ be a $\Dir$-minimizer. If the dimension of $\Omega_0$ is strictly larger than $m-1$, then $\Omega_0=\Omega$.
\end{proposition}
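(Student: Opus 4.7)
The strategy is to pass to $\bar u := u \ominus \etab \circ u$, which by Proposition \ref{p:factorization} is a $\Dir$-minimizer in $W^{1,2}(\Omega, \IQSn)$, and to observe that $\Omega_0 = \{\bar u = Q\a{0}\}$. It suffices to show that if $\bar u$ is not identically $Q\a{0}$ on the connected set $\Omega$, then $\dim_{\Ha}\Omega_0 \le m - 1$; this contradicts the hypothesis and forces $\Omega_0 = \Omega$.

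First I would prove a unique continuation statement: if $\bar u \equiv Q\a{0}$ on a non-empty open subset of $\Omega$, then $\bar u \equiv Q\a{0}$ on all of $\Omega$. The key observation is that the first factor $v$ of $\bar u$ (under the isometry $\iso$) is a classical $\Iqsn$-valued $\Dir$-minimizer on the open set $\Omega\setminus\overline{\Omega_-}$: any $\Iqsn$-valued competitor $v'$ for $v$, with $v - v'$ compactly supported in $\Omega\setminus\overline{\Omega_-}$, yields a valid $\IQSn$-competitor $\bar u' = (v', w)$ for $\bar u$, and minimality of $\bar u$ gives $\Dir(v)\le \Dir(v')$. Classical unique continuation for $\Iqsn$-valued $\Dir$-minimizers then propagates $v \equiv Q\a{0}$ from any open subset to the connected component of $\Omega\setminus\overline{\Omega_-}$ containing it. The symmetric argument for the second factor $w$ on $\Omega\setminus\overline{\Omega_+}$, together with the connectedness of $\Omega$, forces $\bar u \equiv Q\a{0}$ throughout. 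In particular, $\Omega_0$ has empty interior.

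With $\Omega_0$ of empty interior, Theorem \ref{t:frequency_monot}(c) guarantees $I_{x,\bar u}(0^+) \ge c_0(m,n,Q) > 0$ at every $x \in \Omega_0$. I would then run Almgren--Federer dimension reduction. Suppose by contradiction $\dim_{\Ha}\Omega_0 > m - 1$, pick $s \in (m-1, \dim_{\Ha}\Omega_0)$ so that $\Ha^s(\Omega_0) > 0$, and choose $x_0 \in \Omega_0$ of positive upper $s$-density (Federer's density theorem). The rescalings
\[
\bar u_{x_0, r}(y) := \bar u(x_0 + r y) \big/ \sqrt{r^{1-m} H_{x_0,\bar u}(r)}
\]
are $\Dir$-minimizers with $\int_{\partial B_1}|\bar u_{x_0,r}|^2 = 1$, and frequency monotonicity bounds their Dirichlet energy uniformly on compact sets. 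Proposition \ref{prop.compactness of minimizers} combined with Theorem \ref{t:frequency_monot}(d) yields, along a sequence $r_k \to 0$, a strong $W^{1,2}_{\rm loc}$-limit $\bar u_\infty \colon \R^m \to \IQSn$ which is a non-trivial $I_0$-homogeneous $\Dir$-minimizer with $I_0 \ge c_0$. The uniform H\"older convergence from Theorem \ref{thm.interior Hoelder regularity} transfers the positive $s$-density of $\Omega_0$ at $x_0$ to the zero set $\{\bar u_\infty = Q\a{0}\}$, which by homogeneity is a closed cone of Hausdorff dimension $> m - 1$. I would then iterate: a blow-up of $\bar u_\infty$ at a non-origin point $y_0 \in \{\bar u_\infty = Q\a{0}\} \cap S^{m-1}$ of positive $(s-1)$-density produces, thanks to the homogeneity of $\bar u_\infty$, a tangent map translation-invariant along $\R y_0$, hence a non-trivial homogeneous $\Dir$-minimizer on $\R^{m-1}$ whose zero set has dimension $> m - 2$; after $m$ iterations one arrives at a non-trivial homogeneous $\Dir$-minimizer on a single point, an absurdity.

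The main obstacle will be the rigorous transfer of positive Hausdorff $s$-measure through the blow-up limit. Without an a priori upper Ahlfors-type bound on $\Ha^s(\Omega_0 \cap B_r(x_0))$, one must pass to a weak-$\ast$ limit of the finite measures $\Ha^s \res ((\Omega_0 - x_0)/r_k \cap \overline B_1)$, extract a non-trivial limit measure whose support lies in $\{\bar u_\infty = Q\a{0}\}$, and conclude via classical density estimates that this support has Hausdorff dimension strictly greater than $m - 1$.
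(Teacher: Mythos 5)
Your overall strategy---pass to $\bar u = u \ominus \etab\circ u$, show that $\Omega_0 = \{\bar u = Q\a{0}\}$ is either all of $\Omega$ or has empty interior, then run Almgren--Federer dimension reduction via frequency blow-ups---is exactly the blueprint the paper follows. However, the unique continuation step as you have written it has a genuine gap. From $v \equiv Q\a{0}$ on the component $C_1$ of $\Omega \setminus \overline{\Omega_-}$ containing $U$ and $w \equiv Q\a{0}$ on the component $C_2$ of $\Omega \setminus \overline{\Omega_+}$ containing $U$, you invoke ``connectedness of $\Omega$'' to get $\bar u \equiv Q\a{0}$ everywhere. This does not follow: $C_1$ and $C_2$ are components of two different open sets, and there is no reason for $C_1 \cup C_2$ to exhaust $\Omega$. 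An iteration of your argument with $U$ replaced by $C_1 \cup C_2$ need not terminate either. The detour through the factors $v$ and $w$ and the classical $\Iqsn$ theory is in fact unnecessary: Theorem~\ref{t:frequency_monot} already gives unique continuation directly for $\Iqspec$-valued $\Dir$-minimizers. Its opening dichotomy says that if $H_{x,\bar u}(r_0) = 0$ for some $r_0 > 0$, then $\bar u \equiv Q\a{0}$ on the entire ball $B_{\rho(x)}(x)$ with $\rho(x) = \dist(x,\partial\Omega)$. Hence $V := \mathrm{int}\,\Omega_0$ satisfies $x \in V \Rightarrow B_{\rho(x)}(x) \subset V$, so $V$ is open, and since $\rho$ is continuous the same inclusion gives that $V$ is relatively closed in $\Omega$; connectedness then forces $V = \emptyset$ or $V = \Omega$. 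This is, essentially, the paper's ``simple density argument,'' which never decomposes into $v$ and $w$ at all.

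A secondary remark on what you flag as the main obstacle: the paper transfers positivity of the singular set's size through the blow-up not by passing to a weak-$\ast$ limit of rescaled Hausdorff measures, but by working with the Hausdorff content $\mathcal{H}^{m-1+\alpha}_\infty$ (the outer measure without the $\delta\to 0$ limit), which is upper semicontinuous under the uniform convergence supplied by the H\"older estimates of Theorem~\ref{thm.interior Hoelder regularity}. This sidesteps the need for density estimates on a limit measure and is the standard device in such dimension reduction arguments. Your proposed route could likely be made to work, but the content-based argument is shorter and is what the paper (and \cite{DLS_Qvfr}) uses.
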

\begin{proof}
The proof is entirely analogous to the one of \cite[Proposition 3.22]{DLS_Qvfr} and we just sketch it here for the reader's convenience. First of all we observe that without loss of generality we can assume $\etab\circ u \equiv 0$. In this case the statement of the proposition becomes then that either $\Dir (u)=0$, or the Hausdorff dimension of $\Omega_0$ is at most $m-1$. We
argue by contradiction and assume that $ \Dir (u)>0$ and $\mathcal{H}^{m-1+\alpha}_\infty (\Omega_0) > 0$ for some $\alpha > 0$. 

We then fix a point $x \in \Omega$ where 
\begin{equation}\label{e:density_point}
\limsup_{r\downarrow 0} \frac{\mathcal{H}^{m-1+\alpha}_\infty (\Omega_0 \cap B_r (x))}{r^{m-1+\alpha}}>0\, ,
\end{equation}
which for measure theoretic reasons occurs at $\mathcal{H}^{m-1+\alpha}$-a.e. $x\in \Omega_0$. 
For any $x\in \Omega$ we define $\rho (x) := \dist (x, \partial \Omega)$. We then claim that
for at least one $x$ where \eqref{e:density_point} holds we must have $\Dir (u, B_{\rho (x)} (x)) > 0$. Otherwise $\Dir (u, B_{\rho (x)} (x)) =0$ for every $x\in \Omega_0$ by a simple density argument. This would imply that $\Omega_0$ is open. Since it is also obviously closed and $\Omega$ is connected, we conclude that $\Omega_0 = \Omega$, which is a contradiction to $\Dir (u, \Omega) >0$.

Fix then a point $x$ where \eqref{e:density_point} holds and $\Dir (u, B_{\rho (x)} (x)) >0$. We take advantage of Theorem \ref{t:frequency_monot} in order to consider $I_0 = \lim_{r\downarrow 0} I_{x, u} (r)$, and we define the rescaled functions
\[
y \mapsto u_r (y)\, ,
\] 
where $u^\pm_r (y) = \sum_i \a{r^{-I_0} u_i^\pm (ry +x)}$. Using the compactness of $\Dir$-minimizers and the monotonicity of the frequency function, we conclude that, up to subsequences, rescaled maps converge (locally strongly in $W^{1,2}(\R^m,\Iqspec)$) to tangent functions defined on $\R^m$ which are locally $\Dir$-minimizers, take values in $\Iqspec$ and are nontrivial. In turn,
for an appropriate chosen subsequence, \eqref{e:density_point} is used with Theorem \ref{thm.interior Hoelder regularity} and with the upper semicontinuity of the $\mathcal{H}^{m-1+\alpha}_\infty$ measure to conclude that at least one such tangent function $v$ has the property that $\mathcal{H}^{m-1+\alpha} (\{|v|=0\}\cap B_1)> 0$.

Observe that $v$ is $I_0$ homogeneous. We can repeat the procedure and find a tangent function to $v$ at some $y$ with all the properties above. Such function turns out to be independent of the variable $y$. Repeating the construction $m$ times we end up with a function $w$ which has positive Dirichlet energy, is a local energy minimizer, is constant and for which the set $\{|w|=0\}$ is nonempty. This is clearly a contradiction.
\end{proof}

\section{Currents associated to normal graphs on an oriented submanifold}

The remaining sections of this work are aimed at obtaining several additional results concerning the geometry of (the integer rectifiable currents associated to) graphs of $\Iqspec$-valued functions, which will play a pivotal role in the approximation procedure of area minimizing currents modulo $p=2Q$ at points of density $Q$ carried out in \cite{DLHMS}.

From now on, we will often work under the following assumptions.

\begin{ipotesi}\label{a:Iqspec-normal}
We consider:

\begin{itemize}

\item[(M)] an open submanifold $\mathcal{M} \subset \R^{m+n}$ of class $C^3$ and dimension $m$, with $\Ha^m(\mathcal{M}) < \infty$, which is the graph of a function $\bphi \colon \Omega \subset \R^m \to \R^n$ with $\| \bphi \|_{C^3} \leq c$. We will let $A$ and $H$ denote the second fundamental form and the mean curvature vector of $\mathcal{M}$ as a submanifold of $\R^{m+n}$, respectively;

\item[(U)] a regular tubular neighborhood ${\bf U}$ of $\mathcal{M}$ in $\R^{m+n}$, defined as
\[
{\bf U} = \left\lbrace x + {\rm v} \, \colon \, x \in \mathcal{M}\,, {\rm v} \in T_{x}^{\perp}\mathcal{M} \mbox{ with } \abs{{\rm v}} \leq c_0 \right\rbrace\,,
\]  
where the constant $c_0$ is so small that a unique nearest point projection $\p \colon {\bf U} \to \mathcal{M}$ is well defined and of class $C^2$;

\item[(N)] a proper Lipschitz map $N \colon \mathcal{M} \to \mathscr{A}_Q(\R^{m+n})$ which satisfies $N^+_i (x), N^-_i (x), \etab\circ N (x) \in T_x^\perp \cM$ $\forall i$ and $\forall x\in \cM$; the map $N$ induces an
\[
F \colon \mathcal{M} \to \mathscr{A}_Q(\R^{m+n})
\]
by setting
\begin{align*}
F(x) &= \left\{
\begin{array}{ll}
\left(\sum_i \a{x + N^-_i (x)}, -1 \right) \qquad &\mbox{on $\cM_-$}\\
\left(\sum_i \a{x+N^+_i (x)}, +1 \right) \qquad &\mbox{on $\cM_+\cup \cM_0$}\, . 
\end{array}\right.
\end{align*}
\end{itemize}
\end{ipotesi}

Observe that $F^\pm$ and $\etab \circ F$ are proper maps, and they are Lipschitz continuous by Corollary \ref{c:separate_Lip}. Let $\a{\cM}$ be the multiplicity one $m$-dimensional current associated to $\cM$ with the orientation induced by its graph structure. Then, recalling \cite{DLS_Currents}, we have a natural way of pushing forward $\a{\cM}$ through the multivalued map $F^\pm$: the corresponding notation
is $\bT_{F^\pm}$ (in order to distinguish it from the classical ``push-forward'' via one-valued functions). 

\begin{definition}\label{d:push-mod-p} We introduce the notation $\bT_F$ for the integer rectifiable current which is naturally induced by $F$ and which is a representative $\modp$.
More precisely, we set
\begin{equation} \label{eq:push-mod-p}
\bT_F = \bT_{F^+} \res \p^{-1} (\cM_+) - \bT_{F^-} \res \p^{-1} (\cM_-) + Q (\etab\circ F)_\sharp \a{\cM_0}\, 
\end{equation}
and we introduce the notation
\begin{align}
\bT_F^+ & :=  \bT_{F^+} \res \p^{-1} (\cM_+)\\
\bT_F^- &:= - \bT_{F^-} \res \p^{-1} (\cM_-) \\
\bT_F^0 &:= Q (\etab\circ F)_\sharp \a{\cM_0}\, .
\end{align}
\end{definition}

\begin{remark}
Observe that $\|\bT_F^0\| (\bU \setminus \p^{-1} (\cM_0)) =0$. In particular,
since the sets $\p^{-1}(\cM_+), \p^{-1} (\cM_-)$ and $\p^{-1} (\cM_0)$ are pairwise disjoint, for every Borel $E\subset \bU$ we have
\begin{equation}\label{e:mass_separates}
\|\bT_F\| (E) = \|\bT^0_F\| (E) + \|\bT^-_F\| (E) + \|\bT^+_F\| (E)\, .
\end{equation}
\end{remark}

\section{Compatible triples}

Suppose $(g^+,g^-,g)$ is a triple of Lipschitz continuous functions with $g^{\pm}: U \to \Iqs$ and $g: U \to \R^n$ with the additional property that they satisfy the following

\begin{definition}[Compatibility conditions]\label{d:compatibility}
$\,$
\begin{itemize}
	\item[(a)] For any $x \in U$, either ${\rm sep}(g^+(x)) = 0$ or ${\rm sep}(g^-(x)) = 0$.
	\item[(b)] $g(x) = \bfeta \circ g^+ (x)$ whenever ${\rm sep} (g^+ (x)) =0$ and $
	g(x) = \bfeta \circ g^-(x)$ whenever ${\rm sep} (g^- (x)) =0$.
\end{itemize}
Here, we have used the notation introduced in \cite{DLS_Qvfr}, according to which $\operatorname{sep}(T)= \min\{ \abs{t_i-t_j} \colon t_i\neq t_j, T= \sum_{l=1}^Q \a{t_l} \}$ with the convention that $\min\emptyset = 0$. Also note that if $\left( g^+, g^-, g\right)$ satisfies the compatibility conditions, and if for some $x \in U$ it holds ${\rm sep}(g^+(x)) = 0 = {\rm sep}(g^-(x))$ then necessarily $g^+(x) = g^-(x) = Q \a{v}$ for some $v \in \R^n$, and $g(x) = v$.
\end{definition}

To such a triple we can associate a Lipschitz map into $\Iqspec$ by means of the following transformation. We first define 
\[
{\bf j} \colon  (g^+, g^-, g) \mapsto (v, w, z) = {\bf j}(g^+,g^-,g) := (g^+ \ominus \etab \circ g^+,g^- \ominus \etab \circ g^- , \etab\circ g^+ + \etab\circ g^- - g). \]
Then, we map ${\bf j}(g^+, g^-, g)$ into $f := \iso^{-1} ({\bf j} (g^+, g^-, g))$. In particular, the $\Iqspec$-valued map $f$ can be explicitly given as 
\[
f (x) = \left\{
\begin{array}{ll}
(g^+ (x), 1) \quad &\mbox{if ${\rm sep} (g^- (x)) = 0$}\\
(g^- (x), -1) \quad &\mbox{otherwise.}
\end{array}\right.
\]
Consistently with the notation of the previous sections, since $\iso$ is an isometry, we identify $f$ and $(v,w,z) = {\bf j} (g^+, g^-, g)$ and use interchangeably both symbols, depending on which is most convenient at the moment. 
One readily checks that $f$ is a Lipschitz map from $U$ into $\Iqspec$. 

Note that moreover 
${\bf j} (f^+,f^-,\etab\circ f) = f$. We thus have a right inverse of the map ${\bf j}$. 
However, there is {\em not} a 1-to-1 correspondence between $\Iqspec$-valued maps $f$ and triples $(g^+, g^-, g)$ satisfying (a) and (b). We therefore introduce the following terminology.

\begin{definition}\label{d:canonical}
The triple $(f^+, f^-, \etab\circ f)$ will be called the {\em canonical decomposition} of the map $f$. 
\end{definition}
Next note that the following lemma is a very simple consequence of the above definitions.

\begin{lemma}\label{l:admissible_triples}
For any $f:U \to \Iqspec$ the following holds. 
\begin{itemize}
\item[(i)] First of all we have the estimates
\begin{align}
\max \{\Lip (f^\pm), \Lip (\etab\circ f)\} &\leq \Lip (f)\, ;\label{e:lipf_lipf+-}
\end{align}
and
\begin{equation}
\Lip (f) \leq \Lip (g^+) + \Lip (g^-)\, .\label{e:lipf+-_lipf}
\end{equation}
for any $(g^+, g^-, g)$ such that ${\bf j} (g^+, g^-, g) = f$.
\item[(ii)] The canonical decomposition of the domain $U$ of $f$ can be determined using any triple $(g^+, g^-, g)$ such that ${\bf j} (g^+, g^-, g) = f$. More precisely:
\begin{align}
U_\pm &= \{{\rm sep} (g^\pm)>0\}\label{e:U+-}\\
U_0 &= U\setminus (U_+ \cup U_-) = \{{\rm sep} (g^+) = {\rm sep} (g^-) =0\}\, .
\label{e:U0}
\end{align}
\item[(iii)] The following identities hold whenever ${\bf j} (g^+, g^-, g) = f$:
\begin{align}
\sum_i \a{Dg^+_i} &=\sum_i \a{Df_i} \qquad \mbox{a.e. on $U_+\cup U_0$}\, ,\label{e:comp_der_1}\\
\sum_i \a{Dg^-_i} &= \sum_i \a{Df_i} \qquad \mbox{a.e. on $U_-\cup U_0$}\, ,\label{e:comp_der_2}
\end{align}
\begin{equation}\label{e:comp_der_3}
|Df| =
\left\{
\begin{array}{ll}
|Dg^+| \qquad &\mbox{a.e. on $U_+$}\\
|Dg^-| \qquad &\mbox{a.e. on $U_-$}\\
\sqrt{Q} |D g| & \mbox{a.e. on $U_0$}.
\end{array}
\right.
\end{equation}
\end{itemize}
\end{lemma}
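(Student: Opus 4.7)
For Part (ii), write $(v, w, z) = \iso \circ f$, so that by the formula for ${\bf j}$ we have $v = g^+ \ominus \etab g^+$ and $w = g^- \ominus \etab g^-$. Since $|v(x)|^2 = \sum_i |g^+_i(x) - \etab g^+(x)|^2$, the equivalence $|v(x)| = 0 \iff {\rm sep}(g^+(x)) = 0$ is immediate and yields at once $U_+ = \{|v| > 0\} = \{{\rm sep}(g^+) > 0\}$; the analogous descriptions of $U_-$ and $U_0$ follow in the same way.

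For Part (i), the first inequality is exactly Corollary \ref{c:separate_Lip}. For the second, unpacking $f = \iso^{-1} \circ {\bf j}(g^+, g^-, g)$ with the aid of (a) and (b) shows that $f(x) = (g^+(x), 1)$ on $U_+ \cup U_0$ and $f(x) = (g^-(x), -1)$ on $U_-$, with the two representations identified at points of $U_0$. Whenever $\{x, y\} \subset U_+ \cup U_0$ (resp.\ $\{x, y\} \subset U_- \cup U_0$), the same-sign branch of the formula for $\G_s$ gives directly $\G_s(f(x), f(y)) = \G(g^+(x), g^+(y)) \leq \Lip(g^+)|x-y|$ (resp.\ $\leq \Lip(g^-)|x-y|$), both of which are majorized by $(\Lip(g^+) + \Lip(g^-))|x-y|$.

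The nontrivial case, and the main obstacle, is $x \in U_+$, $y \in U_-$ (the symmetric case being identical). The key step is to produce a point $z \in U_0 \cap [x, y]$: by Part (ii), $U_\pm$ are disjoint open sets, so $U_+ \cap [x,y]$ and $U_- \cap [x,y]$ are disjoint relatively open subsets of the connected interval $[x,y]$ containing the endpoints, and their union cannot exhaust $[x,y]$; by compatibility (a), the nonempty complement lies entirely in $U_0$. At such $z$, both $g^\pm(z)$ are collapsed with $\etab g^+(z) = \etab g^-(z) = g(z)$. Applying the variance identity \eqref{eq:norms and averages2} to $\G(g^+(x), g^+(z))^2$---where $g^+(z) \ominus \etab g^+(z) = Q\a{0}$---yields
\[
|g^+(x) \ominus \etab g^+(x)|^2 + Q|\etab g^+(x) - \etab g^+(z)|^2 \leq \Lip(g^+)^2 |x-z|^2\,,
\]
together with the symmetric bound for the pair $g^-(z), g^-(y)$. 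Since $\etab g^+(z) = \etab g^-(z)$, the decomposition $\etab g^+(x) - \etab g^-(y) = (\etab g^+(x) - \etab g^+(z)) + (\etab g^-(z) - \etab g^-(y))$ combined with the triangle inequality bounds the cross-term of $\G_s(f(x), f(y))^2$; combining the four pieces algebraically yields $\G_s(f(x), f(y)) \leq \Lip(g^+)|x-z| + \Lip(g^-)|z-y| \leq (\Lip(g^+) + \Lip(g^-))|x-y|$.

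Part (iii) reduces to direct verification. From the definition of $\iso^{-1}$, one checks $f^+ = (g^+ \ominus \etab g^+) \oplus \etab g^+ = g^+$ as $\mathcal{A}_Q$-valued maps on $U_+ \cup U_0$, and symmetrically $f^- = g^-$ on $U_- \cup U_0$. On $U_\pm$, Proposition \ref{p:app_differentiability} together with these identities immediately gives \eqref{e:comp_der_1}--\eqref{e:comp_der_2} and the first two cases of \eqref{e:comp_der_3}. On $U_0$, where $g^\pm$ are collapsed with $\etab g^\pm = g = \etab f$, a standard density argument ensures $Dg^\pm_i = D(\etab g^\pm) = Dg$ a.e.\ for each $i$; Proposition \ref{p:app_differentiability} then delivers $\sum_i \a{Df_i} = Q\a{D(\etab f)} = Q\a{Dg}$ and $|Df|^2 = Q|Dg|^2$, closing the remaining identities.
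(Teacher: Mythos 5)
The paper omits the proof entirely, remarking only that the lemma ``is a very simple consequence of the above definitions.'' Your argument is correct and supplies the details the authors leave out. Parts (ii) and (iii) are straightforward bookkeeping with $\iso$, $\mathbf{j}$ and Proposition~\ref{p:app_differentiability}, as you note. The genuinely interesting point is the cross-term estimate in (i), and you have correctly identified where the structure matters: the passage from $U_+$ to $U_-$ must be ``mediated'' by a collapsed point, otherwise the $Q\,|\etab\circ g^+(x)-\etab\circ g^-(y)|^2$ contribution is not controlled by $\Lip(g^+)$ and $\Lip(g^-)$ alone.

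One small caveat you should flag explicitly: your segment argument requires $[x,y]\subset U$, i.e.\ convexity of $U$. This is not a pedantic remark --- if $U$ is disconnected, the bound \eqref{e:lipf+-_lipf} can actually fail (take $U$ with two components and choose $g^\pm$ collapsed near each component to nearly constant values with $\etab\circ g^+$ far from $\etab\circ g^-$; then $\Lip(g^\pm)$ stays small while the $Q\,|\etab\circ g^+(x)-\etab\circ g^-(y)|^2$ term in $\G_s(f(x),f(y))^2$ is large, forced by $\Lip(g)$ rather than $\Lip(g^\pm)$). In all the paper's applications $U$ is a ball, so this is harmless, but since the lemma is stated for a generic domain it would be cleaner to say ``for $U$ convex'' or to run your $z\in U_0\cap[x,y]$ argument only after remarking that $[x,y]\subset U$. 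With that proviso your proof is complete. As a minor bonus, your chain of estimates in fact shows $\G_s(f(x),f(y))\le \Lip(g^+)|x-z|+\Lip(g^-)|z-y|\le \max\{\Lip(g^+),\Lip(g^-)\}\,|x-y|$, which is sharper than the $\Lip(g^+)+\Lip(g^-)$ claimed.
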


\section{Taylor expansion of area and excess}

We start with a series of theorems which are focused on Taylor expansions of the mass of $\bT_F$ and important variants. The first one, which corresponds to \cite[Theorem 3.2]{DLS_Currents} is the following. 

\begin{theorem}[Expansion of $\mass (\bT_F)$] \label{t:taylor_area}
If $\cM$, $N$ and $F$ are as in Assumption \ref{a:Iqspec-normal} and $\bar{c}$
is smaller than a geometric constant, then
\begin{align}
\mass (\mathbf{T}_F) = {}&  Q \, \cH^m (\cM) - Q\int_\cM \langle H, \etab\circ N\rangle
+ \frac{1}{2} \int_\cM |D N|^2\nonumber\\
&+ \int_\cM \sum_i \Big(P_2 (x,N_i) +  P_3 (x, N_i, DN_i) + R_4 (x, DN_i)\Big),  \label{e:taylor_area}
\end{align}
where
$P_2$, $P_3$ and $R_4$ are $C^1$ functions with the following properties:
\begin{itemize}
\item[(i)] $v\mapsto P_2(x, v)$ is a quadratic form on the normal bundle of $\cM$
satisfying
\begin{equation}\label{e:order_2}
|P_2 (x,v)|\leq C |A (x)|^2 |v|^2 \qquad\quad \forall \; x\in \cM, \;\forall\; v\perp T_x\cM;
\end{equation}
\item[(ii)] $P_3 (x, v, D)= \sum_i L_i (x,v) Q_i (x, D)$, where
$v\mapsto L_i (x,v)$ are linear forms on the normal bundle of $\cM$ and $D\mapsto Q_i (x,D)$ are quadratic forms on the space of ${(m+n)\times(m+n)}$-matrices,
satisfying
\begin{align*}
&|L_i (x,v)|\leq C |A(x)||v|
&\forall x\in \cM, \, \forall v\perp T_x\cM ,\\
&|Q_i (x, D)| \leq C |D|^2 &\forall x\in \cM\, ,\forall D \in \R^{(m+n)\times(m+n)}\,  ;
\end{align*}

\item[(iii)] $|R_4 (x,D)| = |D|^3 L (x,D)$, for some function $L$ with $\Lip (L)\leq C$,
which satisfies $L(x,0)=0$ for every $x\in \cM$
and is independent of $x$ when $A\equiv 0$.
\end{itemize}
Moreover, for any Borel function $h: \R^{m+n}\to \R$, 
\begin{equation}\label{e:taylor_aggiuntivo} 
\left| \int h\, d\|\bT_F\| - \int_{\cM} \sum_i h \circ F_i\right|
\leq C \int_{\cM} \Big(\sum_i |A| |h \circ F_i||N_i| + \|h\|_\infty (|DN|^2 + |A|^2 |N|^2)\Big), 
\end{equation}
and, if $h (q) = g (\p (q))$ for some $g$, we have 
\begin{align}
\left| \int h\, d\|\bT_F\| - \int_\cM (Q- Q\langle H, \etab\circ N\rangle + \textstyle{\frac{1}{2}}|DN|^2) \, g\right|
&\leq C \int_\cM \big(|A|^2 |N|^2 + |DN|^4\big) |g|\, .\label{e:taylor_aggiuntivo_2}
\end{align}
\end{theorem}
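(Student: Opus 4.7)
The proof is essentially a bookkeeping exercise: we split everything along the canonical decomposition $\cM=\cM_+\cup\cM_-\cup\cM_0$ and reduce the three pieces to results already available. Concretely, by \eqref{e:mass_separates} and the fact that $\mass$ is insensitive to orientation,
\[
\mass(\bT_F)=\|\bT_{F^+}\|(\p^{-1}(\cM_+))+\|\bT_{F^-}\|(\p^{-1}(\cM_-))+\|\bT^0_F\|(\p^{-1}(\cM_0)),
\]
and an analogous decomposition holds for $\int h\,d\|\bT_F\|$. On the two pieces $\p^{-1}(\cM_\pm)$ the relevant currents are of the form $\bT_{F^\pm}\res\p^{-1}(\cM_\pm)$ with $F^\pm$ a classical $\Iqs$-valued Lipschitz graph, so we can apply \cite[Theorem 3.2]{DLS_Currents} directly. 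On $\p^{-1}(\cM_0)$ the current $\bT_F^0=Q\,(\etab\circ F)_\sharp\a{\cM_0}$ is the single-valued graph of $\etab\circ N$ counted with multiplicity $Q$, so the standard Taylor expansion of the area functional applies, multiplied by $Q$.

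The key matching observations that allow the three expansions to recombine into a single integral over $\cM$ are: first, from the decomposition $(v,w,z)=\iso(N)$ with $v,w\in\Iqsn$ one has $\etab(v)=\etab(w)=0$, hence $\etab\circ N^+ =\etab\circ N^- =\etab\circ N=z$ everywhere on $\cM$, so that the mean-curvature term $-Q\langle H,\etab\circ N\rangle$ produced on each piece is intrinsically the same; second, by Proposition \ref{p:app_differentiability} we have $|DN|^2=|DN^+|^2$ a.e.\ on $\cM_+\cup\cM_0$, $|DN|^2=|DN^-|^2$ a.e.\ on $\cM_-\cup\cM_0$ and $|DN|^2=Q|D(\etab\circ N)|^2$ a.e.\ on $\cM_0$, whence
\[
\int_{\cM_+}|DN^+|^2+\int_{\cM_-}|DN^-|^2+Q\int_{\cM_0}|D(\etab\circ N)|^2=\int_\cM|DN|^2.
\]
With the notational convention of Section \ref{s:notation_spec} (where $\sum_i\a{N_i(x)}$ denotes $\sum_i\a{N^+_i(x)}$, $\sum_i\a{N^-_i(x)}$ or $Q\a{\etab\circ N(x)}$ according to whether $x$ lies in $\cM_+$, $\cM_-$ or $\cM_0$), the error terms $P_2,P_3,R_4$ delivered piecewise by \cite[Theorem 3.2]{DLS_Currents} combine into the single integral $\int_\cM\sum_i(P_2+P_3+R_4)$ appearing in \eqref{e:taylor_area}, with the required structural properties (i)--(iii) inherited from the $\Iqs$-valued statement.

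The auxiliary inequalities \eqref{e:taylor_aggiuntivo} and \eqref{e:taylor_aggiuntivo_2} are handled in exactly the same fashion: each integral on the left is split into three contributions via \eqref{e:mass_separates}, to which the analogous auxiliary estimates of \cite[Theorem 3.2]{DLS_Currents} are applied on $\cM_\pm$, while on $\cM_0$ we use the single-valued multiplicity-$Q$ case (where $\sum_i h\circ F_i$ reduces to $Q\,h\circ\etab\circ F$). Summing the resulting error bounds, the common integrand $|A|^2|N|^2+|DN|^4$ reappears thanks to the identities $|N|=|N^\pm|$ and $|DN|=|DN^\pm|$ on the appropriate sets (and $|N|=\sqrt{Q}|\etab\circ N|$, $|DN|^2=Q|D(\etab\circ N)|^2$ on $\cM_0$). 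The main obstacle is not analytic but organizational: one must keep track of the identifications between $N^+$, $N^-$ and $\etab\circ N$ on the three pieces of the canonical decomposition, and of the elementary fact that the minus sign in the definition of $\bT_F^-$ does not enter the total-variation measure. Once this bookkeeping is in place, no new estimate is required and the theorem follows from the results already contained in \cite{DLS_Currents}.
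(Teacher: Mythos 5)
Your proof is correct and follows essentially the same route as the paper: decompose along $\cM_+\cup\cM_-\cup\cM_0$ via \eqref{e:mass_separates}, apply \cite[Theorem 3.2]{DLS_Currents} to each piece, and use the differentiability identities from Proposition \ref{p:app_differentiability} together with the notational conventions of Section \ref{s:notation_spec} to reassemble the three expansions (noting that $P_2,P_3,R_4$ in the cited theorem depend only on the geometry of $\cM$, so they agree across the three pieces). The only organizational difference is that the paper derives the mass expansion as a consequence of \eqref{e:taylor_aggiuntivo_2}, localizing \cite[Theorem 3.2]{DLS_Currents} by choosing the test functions $g^\square=g\,\mathbf{1}_{\cM_\square}$ rather than by restricting the domain of the push-forward as you do; both devices lead to the same bookkeeping.
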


\begin{proof}
Observe that the first part of the statement is a simple consequence of \eqref{e:taylor_aggiuntivo_2}. The latter one can be easily reduced to \cite[Theorem 3.2]{DLS_Currents} using \eqref{e:mass_separates}. Indeed, if we introduce $g^+ := g \mathbf{1}_{\cM_+}$, $g^- := g \mathbf{1}_{\cM_-}$ and  $g^0: = g \mathbf{1}_{\cM_0}$ and the corresponding $h^\square (p) = g^\square (\p (p))$, it suffices to prove \eqref{e:taylor_aggiuntivo_2} for each pair $(h^\square, g^\square)$. In such cases, however, \eqref{e:taylor_aggiuntivo_2} can be concluded from \cite[Theorem 3.2]{DLS_Currents} (more specifically \cite[Eq. (3.4)]{DLS_Currents}) applied to each $\bT_{F^\square}$.

As for \eqref{e:taylor_aggiuntivo} precisely the same argument reduces it to prove it for
$h^+ := h \mathbf{1}_{\p^{-1} (\cM_+)}$, $h^- := h \mathbf{1}_{\p^{-1} (\cM_-)}$ and 
$h^0 := h \mathbf{1}_{\p^{-1} (\cM_0)}$. As above, each such case can be inferred from \cite[Theorem 3.2]{DLS_Currents} (more specifically \cite[Eq.  (3.3)]{DLS_Currents}) applied to the corresponding $\bT_{F^\square}$. 
\end{proof}

An important corollary of the theorem above is the following. 

\begin{corollary}[Expansion of $\mass (\mathbf{G}_f)$] \label{c:taylor_area}
Assume $\Omega\subset \R^m$ is an open set with bounded measure and $f:\Omega
\to \Iqspec$ a Lipschitz map with $\Lip (f)\leq \bar{c}$. Then,
\begin{equation}\label{e:taylor_grafico}
\mass (\mathbf{G}_f) = Q |\Omega| + \frac{1}{2} \int_\Omega |Df|^2 + \int_\Omega \sum_i \bar{R}_4 (Df_i)\, ,
\end{equation}
where $\bar{R}_4\in C^1$ satisfies $|\bar{R}_4 (D)|= |D|^3\bar{L} (D)$ for $\bar{L}$ with $\Lip (\bar{L})\leq C$ and
$\bar L(0) = 0$.
\end{corollary}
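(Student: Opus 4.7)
The strategy is to specialize Theorem \ref{t:taylor_area} to the case of a flat ambient submanifold. Embed $\Omega$ into $\R^{m+n}$ via $\cM := \Omega \times \{0\}$ (so that the defining function $\bphi$ vanishes identically); then $\cM$ satisfies the regularity part of Assumption \ref{a:Iqspec-normal}, and $T_x^\perp \cM$ is naturally identified with $\R^n$ for every $x \in \cM$. Via this identification, define $N(x) := f(x)$, which is $\Iqspec$-valued and satisfies the normality condition of Assumption \ref{a:Iqspec-normal} tautologically. The map $F$ built from $N$ as in Assumption \ref{a:Iqspec-normal} coincides with the graph map of $f$, so a direct comparison of Definition \ref{d:graph} with Definition \ref{d:push-mod-p} yields $\mathbf{G}_f = \bT_F$, and in particular $\mass(\mathbf{G}_f) = \mass(\bT_F)$.

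The crucial simplification is that $\cM$ is totally geodesic in $\R^{m+n}$, so $A \equiv 0$ and $H \equiv 0$. Plugging this into \eqref{e:taylor_area}: the mean curvature term $Q\int_\cM \langle H, \etab\circ N\rangle$ vanishes; by property (i), $P_2(x,\cdot) \equiv 0$; by property (ii), every $L_i(x,\cdot) \equiv 0$, so $P_3 \equiv 0$ as well; and by property (iii) the function $R_4(x,D)$ no longer depends on $x$ and is of the form $\bar R_4(D) = |D|^3 \bar L(D)$ for some Lipschitz $\bar L$ with $\bar L(0) = 0$.

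It remains to identify the surviving geometric quantities. We have $\Ha^m(\cM) = |\Omega|$, which yields the leading term $Q|\Omega|$. By Proposition \ref{p:app_differentiability} (combined with Definition \ref{d:Sobolev}) one has $|DN|^2 = |Df|^2$ a.e. on each of $\Omega_+$, $\Omega_-$ and $\Omega_0$, whence $\int_\cM |DN|^2 = \int_\Omega |Df|^2$. Finally, the sum $\sum_i R_4(x, DN_i)$ is interpreted region by region according to the canonical decomposition of $N$, and under the convention of Section \ref{s:notation_spec} for $\sum_i \a{Df_i}$ this coincides with $\sum_i \bar R_4(Df_i)$. Assembling the pieces gives \eqref{e:taylor_grafico}. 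No genuine obstacle is anticipated; the content is the observation that the general Taylor expansion of Theorem \ref{t:taylor_area} collapses to the desired flat formula, plus the book-keeping needed to match $\mathbf{G}_f$ with the corresponding $\bT_F$.
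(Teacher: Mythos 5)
The proposal is correct and takes essentially the same approach as the paper: both specialize Theorem \ref{t:taylor_area} to the flat case $\cM = \Omega \times \{0\}$, observe that $A \equiv 0$ forces $H$, $P_2$ and $P_3$ to vanish and makes $R_4$ independent of $x$, and match the surviving terms. Your version spells out the identification $\mathbf{G}_f = \bT_F$ and the book-keeping which the paper's one-line proof leaves implicit, but the argument is the same.
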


\begin{proof}
The statement follows from Theorem \ref{t:taylor_area} applied to the case in which $\mathcal{M}$ is flat: since $A=0$ (and thus $H=0$), the linear and third order terms in the expansion \eqref{e:taylor_area} vanish. 
\end{proof}

We next come to two further Taylor expansions. 

\begin{proposition}[Expansion of a curvilinear excess]\label{p:eccesso_curvo}
There exists a dimensional constant $C>0$ such that, if
$\cM$, $F$ and $N$ are as in Assumption \ref{a:Iqspec-normal} with $\bar{c}$ small enough, then
\begin{align}
\left\vert \int |\vec{\bT}_F (x) - \vec{\cM} (\p (x))|_{no}^2 \, d\|\bT_F\| (x)
- \int_\cM |DN|^2 \right\vert \leq  C \int_{\cM} (|A|^2 |N|^2 + |DN|^4)
\label{e:ecurvo_sopra}\,, 
\end{align}
where $\vec{\bT}_F$ and $\vec{\cM}$ are the unit $m$-vectors orienting $\bT_F$ and
$T\cM$, respectively, and $|\cdot|_{no}$ is the non-oriented distance defined by
\begin{equation}\label{e:no_excess_0}
|\vec{\bT}_F- \vec{\cM} (\p (x))|_{no} := \min \{|\vec{\bT}_F-\vec{\cM} (\p (x))|, |\vec{\bT}_F+\vec{\cM} (\p (x))|\}
\end{equation}
\end{proposition}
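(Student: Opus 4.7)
\smallskip

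\noindent\textbf{Proof proposal.} My approach is to reduce the statement to the analogous expansion of the excess for classical multi-valued graphs, which is established in \cite{DLS_Currents}, via the canonical splitting $\bT_F = \bT_F^+ + \bT_F^- + \bT_F^0$ given in Definition \ref{d:push-mod-p}. Since the three summands have pairwise disjoint supports in the tubular neighborhood $\bU$ (because $\p^{-1}(\cM_+)$, $\p^{-1}(\cM_-)$, $\p^{-1}(\cM_0)$ are pairwise disjoint), the additivity \eqref{e:mass_separates} reduces the left-hand side of \eqref{e:ecurvo_sopra} to the sum of three integrals, one over each $\p^{-1}(\cM_\square)$ taken against $\|\bT_F^\square\|$.

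First I would handle the sign ambiguity. On $\p^{-1}(\cM_+)$ the orienting vector $\vec{\bT}_F^+$ coincides with that of $\bT_{F^+}$, and on $\p^{-1}(\cM_0)$ the orienting vector $\vec{\bT}_F^0$ coincides with that of $(\etab\circ F)_\sharp \a{\cM_0}$; in both cases $\vec{\bT}_F^\square$ is close to $\vec{\cM}\circ \p$, so $|\cdot|_{no}$ agrees with the usual Euclidean distance $|\vec{\bT}_F^\square - \vec{\cM}\circ \p|$. On $\p^{-1}(\cM_-)$ instead $\vec{\bT}_F^- = -\vec{\bT}_{F^-}$, so
\[
|\vec{\bT}_F^- - \vec{\cM}\circ\p|_{no} = |\vec{\bT}_F^- + \vec{\cM}\circ \p| = |\vec{\bT}_{F^-} - \vec{\cM}\circ \p|\,,
\]
which is precisely the (small) oriented tilt of $\bT_{F^-}$. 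Thus each of the three integrals is the standard curvilinear excess of a classical (multi-valued or single-valued) push-forward: $\bT_{F^+}\res \p^{-1}(\cM_+)$, $\bT_{F^-}\res \p^{-1}(\cM_-)$, and $Q\,(\etab\circ F)_\sharp \a{\cM_0}$.

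Next I would apply the classical Taylor expansion of the curvilinear excess for $Q$-valued graphs from \cite[Sect.~3]{DLS_Currents} (the analogue of Theorem \ref{t:taylor_area} for the tilt, obtained by the same pointwise algebra as in \cite[Thm.~3.2]{DLS_Currents}) to each of the three pieces. This yields, up to the claimed error terms $C\int(|A|^2|N|^2 + |DN^\square|^4)$ on the relevant piece of $\cM$, the identities
\[
\int_{\p^{-1}(\cM_+)} |\vec{\bT}_F^+ - \vec{\cM}\circ \p|^2\, d\|\bT_F^+\| \;\approx\; \int_{\cM_+} |DN^+|^2\,,
\]
\[
\int_{\p^{-1}(\cM_-)} |\vec{\bT}_F^- - \vec{\cM}\circ \p|_{no}^2\, d\|\bT_F^-\| \;\approx\; \int_{\cM_-} |DN^-|^2\,,
\]
and, since $\bT_F^0$ is a single-valued graph of multiplicity $Q$,
\[
\int_{\p^{-1}(\cM_0)} |\vec{\bT}_F^0 - \vec{\cM}\circ \p|^2\, d\|\bT_F^0\| \;\approx\; Q\int_{\cM_0} |D(\etab\circ N)|^2\,.
\]
Summing the three contributions and invoking Proposition \ref{p:app_differentiability} (which gives $|DN|^2 = |DN^+|^2$ a.e.\ on $\cM_+$, $|DN|^2 = |DN^-|^2$ a.e.\ on $\cM_-$, and $|DN|^2 = Q|D(\etab\circ N)|^2$ a.e.\ on $\cM_0$) produces exactly $\int_\cM |DN|^2$ as the leading term, with a total error of the desired form.

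The only subtlety I foresee is checking that the non-oriented distance on $\p^{-1}(\cM_-)$ cleanly reduces to the oriented one; this hinges on the smallness of $\Lip(N)$ (enforced by choosing $\bar c$ small in Assumption \ref{a:Iqspec-normal}), which guarantees that $\vec{\bT}_{F^-}$ is close to $\vec{\cM}\circ\p$ so that the two candidates in \eqref{e:no_excess_0} are not comparable. All other steps are bookkeeping: they either invoke an already-proved statement from \cite{DLS_Currents} piecewise or rely on the algebraic identity $|DN|^2 = |DN^\square|^2$ (respectively $Q|D(\etab\circ N)|^2$) of Proposition \ref{p:app_differentiability}.
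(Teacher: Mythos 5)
Your proposal is correct and follows essentially the same route as the paper: split $\bT_F$ into the three disjointly supported pieces $\bT_F^\pm$, $\bT_F^0$; observe that at $\|\bT_F\|$-a.e.\ point the non-oriented tilt $|\vec{\bT}_F - \vec{\cM}\circ\p|_{no}$ reduces (thanks to the small Lipschitz constant of $N$, which forces the minimum in \eqref{e:no_excess_0} to pick the nearby candidate) to the oriented tilt of the corresponding classical piece $\bT_{F^\square}$; then invoke the curvilinear excess expansion of \cite[Prop.~3.4]{DLS_Currents} on each piece and recombine via Proposition \ref{p:app_differentiability}. The only point you leave implicit, which the paper states explicitly, is that \cite[Prop.~3.4]{DLS_Currents} must be applied in a ``localized'' form over Borel subsets $\cM_\square\subset\cM$ rather than over all of $\cM$; this is needed since $\cM_\pm$ and $\cM_0$ are generally not open, but it follows directly from the proof given there.
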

\begin{proof}
Proceeding as in the argument leading to Theorem \ref{t:taylor_area}, we can reduce the statement to corresponding ones where $\bT_F$ is replaced by $\bT_{F^\square}$ and $\cM$ is replaced by $\cM_\pm$ or $\cM_0$, after observing that
\[
\abs{ \vec{\bT}_{F}(x) -  \vec{\cM} (\p (x))   }^2_{no}  = \abs{\vec{\bT}_{F^\square}(x) - \vec{\cM} (\p(x))}^2 \qquad \mbox{at $\|{\bT}_F\|-$a.e. $x$}\,.
\]

Each of these statements can then be concluded from \cite[Proposition 3.4]{DLS_Currents}: note indeed that, although \cite[Proposition 3.4]{DLS_Currents} is ``global'', a local version (where, given any Borel $E\subset \cM$, in the right hand side of \cite[Eq. (3.13)]{DLS_Currents} $\bT_F$ is substituted by $\bT_F \res \p^{-1} (E)$ and in the left hand side $\cM$ is substituted by $E$) follows directly from the proof given there. 

\end{proof}

The final Taylor expansion which we treat in this section is the one of a suitable cylindrical excess. In the following theorem, given a disc $B_s \subset \R^m$ we will let $\bC_s$ denote the cylinder $\bC_s := B_s \times \R^n \subset \R^{m} \times \R^{n} \simeq \R^{m+n}$. 

\begin{theorem}[Expansion of a cylindrical excess]\label{t:taylor_tilt}
There exist dimensional constants $C, c>0$ with the following property.
Let $f: \R^m \to \Iqspec$ be a Lipschitz map with ${\rm Lip}\, (f)\leq c$.
For any $0<s$, set $ L:= \fint_{B_s} D (\etab \circ f)$ and denote by $\vec\tau$
the $m$-dimensional simple unit vector orienting the graph of the linear map
$y\mapsto L \cdot y$. Then, we have
\begin{equation}\label{e:taylor_tilt}
\left| \int_{\bC_s} \left| \vec{\mathbf{G}}_f - \vec\tau\right|_{no}^2\, d \|\mathbf{G}_f\| - \int_{B_s} \cG_s (D f, Q \a{L})^2 \right| \leq C \int_{B_s} |Df|^4\, .
\end{equation}
\end{theorem}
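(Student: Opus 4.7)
The plan is to reduce the estimate, piece by piece, to the cylindrical Taylor expansion of the tilt excess for classical $Q$-valued graphs (and single-valued ones on $\Omega_0$), following the same blueprint used in the proofs of Theorem~\ref{t:taylor_area} and Proposition~\ref{p:eccesso_curvo}. Let $\Omega_+, \Omega_-, \Omega_0$ denote the canonical decomposition of $B_s$ induced by $f$ (see Definition~\ref{d:pieces}), and write $f^0 := \etab\circ f$. By Definition~\ref{d:graph} together with~\eqref{e:mass_separates}, and since the non-oriented distance $|\cdot|_{no}$ absorbs the sign flip of $\vec{\mathbf{G}}_f$ on $\Omega_-$, we have
\[
\int_{\bC_s} |\vec{\mathbf{G}}_f - \vec\tau|_{no}^2 \, d\|\mathbf{G}_f\| = \sum_{\square \in \{+,-\}} \int_{B_s \cap \Omega_\square} \sum_i |\vec\tau_{Df^\square_i} - \vec\tau|_{no}^2 \, \bJ f^\square_i \, dy + Q \int_{B_s\cap \Omega_0} |\vec\tau_{Df^0} - \vec\tau|^2 \, \bJ f^0 \, dy,
\]
where $\bJ f^\square_i$ and $\bJ f^0$ denote the usual Jacobian area factors of the graphs. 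An analogous splitting of $\int_{B_s} \cG_s(Df, Q\a{L})^2$ follows from Proposition~\ref{p:app_differentiability}: on $\Omega_\pm$ the integrand equals $\cG(Df^\pm, Q\a{L})^2$, while on $\Omega_0$ it equals $Q|Df^0 - L|^2$.

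The core of the argument is the elementary pointwise Taylor expansion
\[
\Bigl| |\vec\tau_A - \vec\tau_L|^2 \, \sqrt{\det(\mathrm{Id} + A^T A)} - |A - L|^2 \Bigr| \leq C\, (|A|^4 + |L|^4),
\]
valid for linear maps $A, L : \R^m \to \R^n$ with $\max\{|A|,|L|\}$ smaller than a geometric constant (in that regime both $\vec\tau_A$ and $\vec\tau_L$ have a positive component along $e_1\wedge\cdots\wedge e_m$, so $|\cdot|_{no}$ reduces to $|\cdot|$). I would derive it by the usual expansion of $v_1 \wedge \cdots \wedge v_m$ with $v_j := e_j + A e_j$, combined with $\sqrt{\det(\mathrm{Id} + A^T A)} = 1 + \tfrac{1}{2}|A|^2 + O(|A|^4)$, the bound $|\vec\tau_A - \vec\tau_L|^2 = |A-L|^2 + O((|A|+|L|)^2|A-L|^2)$, and the crude inequality $(|A|+|L|)^2|A-L|^2 \leq 8(|A|^4 + |L|^4)$. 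Summing over the sheets $i$ on $B_s\cap \Omega_\pm$ and using the constant multiplicity $Q$ on $B_s\cap \Omega_0$, this yields the desired identity on each of the three pieces, modulo an error of the form $C\int_{B_s\cap \Omega_\square}(|Df^\square|^4 + |L|^4)$ for $\square \in \{+,-,0\}$.

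It remains to package these errors into a single multiple of $\int_{B_s}|Df|^4$. By Proposition~\ref{p:app_differentiability} we have $|Df^+| = |Df|$ on $\Omega_+$, $|Df^-| = |Df|$ on $\Omega_-$, and $Q|Df^0|^2 = |Df|^2$ on $\Omega_0$, so the contributions $\int_{B_s\cap\Omega_\square}|Df^\square|^4$ sum to a dimensional multiple of $\int_{B_s}|Df|^4$. For the constant contribution $|B_s||L|^4$, Jensen's inequality applied to $L = \fint_{B_s}Df^0$, together with the bound $|Df^0| \leq Q^{-1/2}|Df|$ (which follows from~\eqref{e:id_differenziali} via Cauchy--Schwarz on $\Omega_\pm$ and is an equality on $\Omega_0$), gives $|B_s||L|^4 \leq C\int_{B_s}|Df|^4$. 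The only step requiring any real work is the pointwise fourth-order expansion stated above; the rest is bookkeeping entirely parallel to the proofs of the preceding Taylor expansions.
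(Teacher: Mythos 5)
Your proposal is correct and follows essentially the same route as the paper: decompose $B_s$ into $\Omega_+$, $\Omega_-$, $\Omega_0$ via the canonical decomposition, use the non-oriented distance to absorb the sign flip of $\vec{\mathbf{G}}_f$ over $\Omega_-$, reduce each piece to the classical tilt-excess expansion for $\Iqs$-valued (resp.\ single-valued) graphs, and finally reassemble the three pieces using the compatibility of the differentials from Proposition~\ref{p:app_differentiability}. The paper does not re-derive the pointwise fourth-order expansion but instead cites the computations in the proof of the analogous classical result in the currents paper; your version makes that pointwise estimate explicit and also spells out the Jensen argument for absorbing the $|L|^4$ contribution (which the paper leaves implicit in the $O\bigl(\int|Df|^4\bigr)$ error at the intermediate step). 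Both refinements are correct and consistent with the intended argument, so this is the same proof with slightly more of the bookkeeping written out.
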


\begin{proof}
Denote by $E$ the quantity
\[
E = \int_{\bC_s} \left| \vec{\mathbf{G}}_f - \vec\tau\right|_{no}^2\, d \|\mathbf{G}_f\|\,.
\]
Observe that, if we set $U:= B_s$ and introduce the triples $(f^+, f^-, \etab\circ f)$ and $(U_+, U_-, U_0)$, we easily conclude that
\begin{align*}
E =\; & \int_{U_+\times \R^n} |\vec{\mathbf{G}}_{f^+} - \vec\tau|^2 d \|\mathbf{G}_{f^+}\| 
+  \int_{U_-\times \R^n} |\vec{\mathbf{G}}_{f^-} - \vec\tau|^2 d \|\mathbf{G}_{f^-}\|\\
& + Q \int_{U_0\times \R^n} |\vec{\mathbf{G}}_{\etab\circ f} - \vec\tau|^2 d\|\mathbf{G}_{\etab\circ f}\|\, .
\end{align*}

We next can apply the same computations of the proof of \cite[Theorem 3.5]{DLS_Currents} to arrive at
\begin{align*}
E = \; & \int_{U_+} |Df^+|^2 +  Q\,|U_+|\,|L|^2 - 2 \int_{U_+} \sum_i Df^+_i :L  \\
&+ \int_{U_-} |Df^-|^2 +  Q\,|U_-|\,|L|^2 - 2 \int_{U_-} \sum_i Df^-_i :L\\
&+ Q \int_{U_0} |D \etab\circ f|^2 + Q \, |U_0|\, |L|^2 - 2Q \int_{U_0} D \etab\circ f : L\\
& + O \left(\int_{B_s} |Df|^4\right)\, .
\end{align*}
This easily gives
\begin{align*}
E =\; & \int_{U_+} \cG (Df^+, Q \a{L})^2 + \int_{U_-} \cG (Df^-, Q \a{L})^2\\
&+ Q \int_{U_0} |D\etab \circ f - L|^2 + O \left(\int_{B_s} |Df|^4\right)\, .
\end{align*}
Using \eqref{e:comp_der_1} we then conclude \eqref{e:taylor_tilt}. 
\end{proof}

\section{Taylor expansion of first variations}

In this section we consider Taylor expansions of the first variations. 

We begin with the expansion for the first variation of graphs. In the following theorem, $\Lip_c(\Omega \times \R^n, \R^d)$ denotes the space of functions $\zeta \in \Lip(\Omega \times \R^n, \R^d)$ for which there exists $\Omega' \subset \subset \Omega$ such that $f(x,y) = 0$ when $x \notin \Omega'$.

\begin{theorem}[Expansion of $\delta \mathbf{G}_f (\chi)$]\label{t:grafici}
Let $\Omega\subset \R^m$ be a bounded open set and $f:\Omega \to \Iqspec$ a map 
with $\Lip (f)\leq \bar{c}$.
Consider a function $\zeta\in \Lip_c (\Omega \times \R^n, \R^n)$ and the corresponding vector field
$\chi\in \Lip_c (\Omega\times \R^n, \R^{m+n})$ given by $\chi (x,y) = (0, \zeta (x,y))$.
Then,
\begin{equation}\label{e:variazione_media}
\left|\delta \mathbf{G}_f (\chi) - \int_\Omega\sum_i  \big(D_x \zeta (x, f_i) + D_y \zeta (x, f_i) \cdot Df_i\big) : Df_i \right| 
\leq C \int _\Omega |D\zeta| |Df|^3\, .
\end{equation}
\end{theorem}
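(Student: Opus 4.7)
The strategy is to exploit the decomposition of $\mathbf{G}_f$ given in Definition~\ref{d:graph} in order to reduce the statement to the corresponding expansion for classical $Q$-valued graphs proved in \cite{DLS_Currents}. Recall that
\[
\mathbf{G}_f = \mathbf{G}_{f^+}\res (\Omega_+\times \R^n) - \mathbf{G}_{f^-}\res (\Omega_-\times \R^n) + Q\,\mathbf{G}_{\etab\circ f}\res (\Omega_0\times \R^n).
\]
By linearity of the first variation and the usual representation $\delta(\bT\res E)(\chi) = \int_E \operatorname{div}_{\vec{\bT}}\chi\, d\|\bT\|$ valid for integer rectifiable currents and Borel sets $E$, I can split
\[
\delta \mathbf{G}_f(\chi) = \delta \mathbf{G}_{f^+}(\chi\mathbf{1}_{\Omega_+\times \R^n}) - \delta \mathbf{G}_{f^-}(\chi\mathbf{1}_{\Omega_-\times \R^n}) + Q\, \delta \mathbf{G}_{\etab\circ f}(\chi\mathbf{1}_{\Omega_0\times \R^n}).
\]

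Next I would apply the classical expansion for $\delta \mathbf{G}_g$ from \cite{DLS_Currents} (i.e.\ the $\A_Q$-valued analogue of \eqref{e:variazione_media}), localized to a Borel subset of $\Omega$. As was already observed in the proof of \eqref{e:Taylor_exp}, the localization on Borel sets follows from the statement on Lipschitz open sets by approximating from outside with a decreasing sequence of Lipschitz open neighborhoods (with measure converging to that of the Borel set), since every quantity in the estimate is an absolutely continuous integral. Applied to $g = f^+$, $g = f^-$ and $g = \etab\circ f$ on the respective pieces $\Omega_+, \Omega_-, \Omega_0$, this gives, up to an error bounded by $C\int |D\zeta|\,(|Df^+|^3+|Df^-|^3+|D(\etab\circ f)|^3)\leq C\int_\Omega |D\zeta||Df|^3$,
\begin{align*}
\delta \mathbf{G}_f(\chi) \approx{} & \int_{\Omega_+} \sum_i \bigl(D_x \zeta(x, f^+_i) + D_y \zeta(x, f^+_i)\cdot Df^+_i\bigr):Df^+_i \\
& - \int_{\Omega_-} \sum_i \bigl(D_x \zeta(x, f^-_i) + D_y \zeta(x, f^-_i)\cdot Df^-_i\bigr):Df^-_i \\
& + Q\int_{\Omega_0} \bigl(D_x \zeta(x, \etab\circ f) + D_y \zeta(x, \etab\circ f)\cdot D(\etab\circ f)\bigr):D(\etab\circ f).
\end{align*}

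Then I would use the conventions of Section~\ref{s:notation_spec}, together with Proposition~\ref{p:app_differentiability}, to recognize the right-hand side as the integral appearing in \eqref{e:variazione_media}. Specifically, on $\Omega_\pm$ one has $f_i = f^\pm_i$ and $Df_i = Df^\pm_i$ a.e., and on $\Omega_0$ the multi-index $\sum_i \a{(f_i, Df_i)}$ collapses to $Q\a{(\etab\circ f, D(\etab\circ f))}$. The sign change on $\Omega_-$ is precisely cancelled by the fact that $\chi(x,y) = (0,\zeta(x,y))$ is evaluated on $f^-_i$ (and the corresponding differentials) with the same sign convention as on $\Omega_+$, because the two linear forms $D\zeta$ are both even under a global sign flip of the vertical component only through the chain rule in $Df_i$, which contributes quadratically; this is the one point to check carefully, but it follows from the fact that the first variation of the current $-\mathbf{G}_{f^-}\res \Omega_-\times \R^n$ is computed against a vector field with $(m+n)$-tangential structure on the graph, and reversing the orientation of the $m$-vector does not change the integrand of the divergence.

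The main (mild) obstacle is precisely the bookkeeping at the third step: making sure that the sign in the $\Omega_-$ piece does not introduce an unwanted minus in the linear term $\sum_i D_x\zeta(x,f_i):Df_i$, and that the collapsed contribution on $\Omega_0$ matches the convention of Section~\ref{s:notation_spec} with the factor $Q$. Once this is settled, combining the three integrals yields exactly $\int_\Omega \sum_i (D_x\zeta(x,f_i)+D_y\zeta(x,f_i)\cdot Df_i):Df_i$, and the error is controlled by $C\int_\Omega |D\zeta||Df|^3$ as required.
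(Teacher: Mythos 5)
Your proposal follows the same decomposition strategy as the paper: split $\mathbf{G}_f$ over the three pieces $\Omega_+,\Omega_-,\Omega_0$ via Definition~\ref{d:graph}, apply the classical expansion of \cite[Theorem~4.1]{DLS_Currents} to each piece, and then recombine using the conventions of Section~\ref{s:notation_spec} and Proposition~\ref{p:app_differentiability}. Your alternative justification for localizing the classical theorem to Borel subdomains (exhaustion from outside by Lipschitz open sets) also works, though the paper instead just observes that the proof in \cite{DLS_Currents} never actually uses openness of the domain.

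The one place where your argument is internally inconsistent is the sign on the $\Omega_-$ piece. You begin by invoking ``linearity of the first variation'' to write a minus sign in front of $\delta\mathbf{G}_{f^-}(\chi\mathbf{1}_{\Omega_-\times\R^n})$, then carry that minus into the displayed expansion, and only afterwards argue that it must be cancelled. But the first variation of mass is not linear in the current; what is true is that it is additive on currents whose masses are mutually singular, that it is positively homogeneous, and crucially that it is \emph{orientation-insensitive}: since $\delta T(\chi)=\int\operatorname{div}_{\vec{T}}\chi\,d\|T\|$ and the divergence depends only on the unoriented tangent plane, $\delta(-T)(\chi)=\delta T(\chi)$. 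This last observation should be applied \emph{at the outset} so that the split (exactly as in the paper's \eqref{e:decompo_first_var}) carries a plus sign on all three pieces; then there is no minus sign to argue away. Your stated explanation --- that ``the two linear forms $D\zeta$ are both even under a global sign flip of the vertical component'' --- is not correct as a reason: neither $D_x\zeta(x,f_i)\colon Df_i$ nor $D_y\zeta(x,f_i)\cdot Df_i\colon Df_i$ is even in $f_i$. Only the last clause of your sentence (``reversing the orientation of the $m$-vector does not change the integrand of the divergence'') is the right mechanism, and it should govern the computation from the start rather than serve as a post hoc correction.
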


The next two theorems deal with general
$\bT_F$ as in Assumption \ref{a:Iqspec-normal}. We restrict our attention to ``outer and inner variations''. Outer variations result
from deformations of the normal bundle of $\cM$ which are the identity on $\cM$ and 
map each fibre into itself, whereas inner variations result from composing
the map $F$ with isotopies of $\cM$.

\begin{theorem}[Expansion of outer variations]\label{t:outer}
Let $\cM$, $\mathbf{U}$, $\p$ and $F$ be as in Assumption \ref{a:Iqspec-normal}
with $\bar{c}$ sufficiently small. If $\varphi\in \Lip_c (\cM)$ and $X(q) := 
\varphi (\mathbf{p} (q)) (q-\mathbf{p}(q))$, then
\begin{align}
\delta \mathbf{T}_F (X) = \int_\cM \Big(\varphi \, |DN|^2 + 
\sum_i (N_i \otimes D \varphi) : DN_i\Big) - \underbrace{Q \int_\cM \varphi \langle H, \etab\circ N\rangle}_{{\rm Err}_1} + \sum_{i=2}^3{\rm Err}_i
\label{e:outer_exp} 
\end{align}
where
\begin{gather}
|{\rm Err}_2| \leq C \int_\cM |\varphi| |A|^2|N|^2\label{e:outer_resto_2}\\
|{\rm Err}_3|\leq C \int_\cM \Big(|\varphi| \big(|DN|^2 |N| |A| + |DN|^4\big) +
|D\varphi| \big(|DN|^3 |N| + |DN| |N|^2 |A|\big)\Big)\label{e:outer_resto_3}\, .
\end{gather}
\end{theorem}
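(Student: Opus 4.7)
The plan is to split $\bT_F = \bT_F^+ + \bT_F^- + \bT_F^0$ as in Definition \ref{d:push-mod-p}, apply the classical multivalued outer variation expansion (cf.\ \cite{DLS_Currents}) to each summand, and reassemble using the identifications in Proposition \ref{p:app_differentiability} together with the convention of Section \ref{s:notation_spec}. Since $\delta T(X) = \int \mathrm{div}_{\vec T} X \, d\|T\|$ is additive in $T$ and invariant under a global change of orientation (masses and unoriented tangent planes are unaffected by a sign), we get
\[
\delta \bT_F(X) = \delta \bT_F^+(X) + \delta \bT_F^-(X) + \delta \bT_F^0(X)\, .
\]

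For $\bT_F^+ = \bT_{F^+}\res \p^{-1}(\cM_+)$ I would apply the classical $Q$-valued expansion to the proper Lipschitz map $F^+$ on the open submanifold $\cM_+$, producing
\[
\int_{\cM_+}\!\Big(\varphi |DN^+|^2 + \sum_i (N^+_i\otimes D\varphi):DN^+_i\Big) - Q\int_{\cM_+}\!\varphi\langle H,\etab\circ N^+\rangle + E_2^+ + E_3^+,
\]
with $E_2^+, E_3^+$ obeying the analogues of \eqref{e:outer_resto_2}, \eqref{e:outer_resto_3} localized to $\cM_+$; the analogous formula (with $-$ superscripts, integration over $\cM_-$) holds for $\delta \bT_F^-(X)$. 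For $\bT_F^0 = Q(\etab\circ F)_\sharp\a{\cM_0}$, which is $Q$ copies of a classical single-valued graph, I would apply the $Q=1$ version and multiply by $Q$. To collapse these three contributions to the stated formula on the whole $\cM$, I would invoke: (i) Proposition \ref{p:app_differentiability} to identify $|DN^\pm|^2 = |DN|^2$ a.e.\ on $\cM_\pm$ and $Q|D(\etab\circ N)|^2 = |DN|^2$ a.e.\ on $\cM_0$, and analogously for the bilinear terms via the convention $\sum_i \a{N_i} = \sum_i\a{N^\pm_i}$ on $\cM_\pm$ and $\sum_i\a{N_i} = Q\a{\etab\circ N}$ on $\cM_0$; and (ii) the elementary identity $\etab\circ N^\pm = \etab\circ N$ (immediate from Definition \ref{d:pieces} and Proposition \ref{prop.isometry}, since the ``$v,w$'' components are constructed with zero mean), which makes the three $H$-contributions sum to $\mathrm{Err}_1 = -Q\int_\cM\varphi\langle H,\etab\circ N\rangle$. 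The three error terms aggregate to $\mathrm{Err}_2 := E_2^+ + E_2^- + E_2^0$ and $\mathrm{Err}_3 := E_3^+ + E_3^- + E_3^0$ satisfying \eqref{e:outer_resto_2}, \eqref{e:outer_resto_3} via the same pointwise identifications (noting that the $O(|A|^2|N|^2)$ and $O(|DN|^2|N||A|+|DN|^4)$-type bounds in \cite{DLS_Currents} are controlled by integrals of $|DN|$ and $|N|$ without reference to any specific ``positive'' structure).

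The only genuine technical point is the need for a \emph{localized} form of the classical expansion — valid on the open submanifolds $\cM_\pm$ (whose boundaries may be highly irregular) and on the Borel set $\cM_0$. This is however routine: the first variation identity is purely pointwise, and the proof of the classical expansion in \cite{DLS_Currents} proceeds by Taylor-expanding the area integrand pointwise and integrating against $\varphi$, with no integration-by-parts step whose boundary contributions would pose any difficulty. Consequently the classical expansion continues to hold verbatim when $\cM$ is replaced by any Borel subset, which is what the decomposition above requires.
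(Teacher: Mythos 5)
Your proposal is correct and follows essentially the same route as the paper: decompose $\bT_F$ into the three disjointly supported pieces, apply the classical $\Iqs$-valued outer-variation expansion (\cite[Theorem 4.2]{DLS_Currents}) to each piece, reassemble via the pointwise identifications of the differentials on $\cM_\pm$ and $\cM_0$, and observe that the classical expansion is purely pointwise (no integration by parts) and therefore localizes to arbitrary Borel subsets. The paper phrases the reassembly step through the identities \eqref{e:comp_der_1}--\eqref{e:comp_der_3} of Lemma \ref{l:admissible_triples} rather than Proposition \ref{p:app_differentiability}, but the content is the same, and the paper likewise singles out the absence of an integration-by-parts step as the reason the argument is simpler here than for inner variations.
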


Let $Y$ be a Lipschitz vector field on $T\cM$ with compact
support, and define $X$ on $\bU$ setting $X (q) = Y (\p (q))$. Let $\{\Psi_\eps\}_{\eps \in ]-\eta, \eta[}$ be
any isotopy with $\Psi_0 = {\rm id}$ and $\left.\frac{d}{d\eps}\right\vert_{\eps=0}
\Psi_\eps = Y$ and define the following
isotopy of $\mathbf{U}$: $
\Phi_\eps (q) = \Psi_\eps (\mathbf{p} (q)) + (q-\mathbf{p} (q))$.
Clearly $X = \left.\frac{d}{d\eps}\right|_{\eps =0} \Phi_\eps$. 

\begin{theorem}[Expansion of inner variations]\label{t:inner}
Let $\cM$, $\mathbf{U}$ and $F$ be as in Assumption \ref{a:Iqspec-normal} with $\bar{c}$ sufficiently small.
If $X$ is as above, then
\begin{align}
\delta \mathbf{T}_F (X) = \int_\cM \Big( \frac{|DN|^2}{2} {\rm div}_{\cM}\, Y -
\sum_i  D N_i : ( DN_i\cdot D_{\cM} Y)\Big)+ \sum_{i=1}^3{\rm Err}_i,\label{e:inner} 
\end{align}
where
\begin{gather}
{\rm Err}_1 = - Q \int_{ \cM}\big( \langle H, \etab \circ N\rangle\, {\rm div}_{\cM} Y + \langle D_Y H, \etab\circ N\rangle\big)\, ,\label{e:inner_resto_1}\allowdisplaybreaks\\
|{\rm Err}_2| \leq C \int_\cM |A|^2 \left(|DY| |N|^2  +|Y| |N|\, |DN|\right), \label{e:inner_resto_2}\allowdisplaybreaks\\
|{\rm Err}_3|\leq C \int_\cM \Big( |Y| |A| |DN|^2 \big(|N| + |DN|\big) + |DY| \big(|A|\,|N|^2 |DN| + |DN|^4\big)\Big)\label{e:inner_resto_3}\, .
\end{gather}
\end{theorem}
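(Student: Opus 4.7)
The strategy is to reduce the statement to three applications of the classical inner variation expansion for normal graphs of $\Iqs$-valued functions — one for each summand in the decomposition $\bT_F = \bT_F^+ + \bT_F^- + \bT_F^0$ given by Definition \ref{d:push-mod-p} — and then to reassemble the resulting identities by means of Proposition \ref{p:app_differentiability} and the notational convention of Section \ref{s:notation_spec}. This mirrors the strategy used to prove Theorem \ref{t:taylor_area} and Proposition \ref{p:eccesso_curvo}.

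Since the three pieces $\bT_F^\pm$ and $\bT_F^0$ are supported in the pairwise disjoint sets $\p^{-1}(\cM_\pm)$ and $\p^{-1}(\cM_0)$, and since $\Phi_\varepsilon$ is a diffeomorphism of $\mathbf{U}$ for $\varepsilon$ small (so that the pushed-forward supports remain pairwise disjoint), mass is additive across the decomposition both before and after the push-forward, and differentiation at $\varepsilon = 0$ yields $\delta \bT_F(X) = \delta \bT_F^+(X) + \delta \bT_F^-(X) + \delta \bT_F^0(X)$. Each of the first two addends is expanded by applying the classical inner variation theorem for $\Iqs$-valued normal graphs of \cite{DLS_Currents} to $\bT_{F^\pm}$, localized to $E=\cM_\pm$; the third addend is $Q$ times the push-forward of $\a{\cM_0}$ by the Lipschitz single-valued normal graph $x \mapsto x + \etab\circ N(x)$, and is expanded via the corresponding single-valued version, localized to $E=\cM_0$.

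The assembly of the three expansions uses Proposition \ref{p:app_differentiability}, which gives $|DN|^2 = |DN^\pm|^2$ on $\cM_\pm$ and $|DN|^2 = Q|D(\etab\circ N)|^2$ on $\cM_0$, together with the convention of Section \ref{s:notation_spec}, under which $\sum_i DN_i : (DN_i \cdot D_\cM Y)$ stands for $\sum_i DN_i^\pm : (DN_i^\pm \cdot D_\cM Y)$ on $\cM_\pm$ and for $Q\,D(\etab\circ N) : (D(\etab\circ N) \cdot D_\cM Y)$ on $\cM_0$; these allow one to glue the three principal terms into the single integral $\int_\cM \big(\tfrac{|DN|^2}{2}\dv_\cM Y - \sum_i DN_i : (DN_i \cdot D_\cM Y)\big)$. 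Furthermore, a direct consequence of Definition \ref{d:pieces} is that $\etab\circ N^+ = \etab\circ N^- = \etab\circ N$ \emph{everywhere} on $\cM$, so the three mean-curvature contributions — each of the form $-Q\int_E(\langle H,\etab\circ N\rangle\dv_\cM Y + \langle D_Y H,\etab\circ N\rangle)$ with $E$ equal to $\cM_+$, $\cM_-$, $\cM_0$ in turn — combine additively into $\mathrm{Err}_1$. The remainder estimates \eqref{e:inner_resto_2}--\eqref{e:inner_resto_3} follow by summing the error contributions from the three pieces and using the pointwise inequalities $|N^\pm|\leq |N|$ and $|DN^\pm|\leq|DN|$.

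The main technical point — and the only real obstacle — is the validity of the classical inner variation formula in the \emph{localized} form invoked above, namely with integration restricted to the Borel sets $\cM_\pm$ (merely open) and $\cM_0$ (merely closed). This is precisely the type of localization already exploited in the proof of Theorem \ref{t:taylor_area}: using that $N^+ = Q\a{\etab\circ N}$ on $\cM_-\cup\cM_0$, one writes $\bT_{F^+} = \bT_F^+ + Q(\etab\circ F)_\sharp\a{\cM_-\cup\cM_0}$ and expresses $\delta \bT_F^+(X)$ as the difference of two \emph{global} applications of the classical formula — one to $\bT_{F^+}$ and one to $Q(\etab\circ F)_\sharp\a{\cM_-\cup\cM_0}$ — with the contributions over $\cM_-\cup\cM_0$ cancelling pointwise $\Ha^m$-almost everywhere by virtue of $N^+ = Q\a{\etab\circ N}$ there. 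The analogous manipulation disposes of the $\bT_F^-$ summand.
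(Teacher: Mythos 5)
Your overall strategy --- decompose $\delta \bT_F(X) = \delta \bT_F^+(X) + \delta \bT_F^-(X) + \delta \bT_F^0(X)$ using the disjoint supports, apply the classical inner-variation expansion to each piece, and reassemble with Proposition \ref{p:app_differentiability} and the convention of Section \ref{s:notation_spec} --- is correct and matches the paper. However, the final paragraph, where you address ``the main technical point,'' contains a genuine gap, and it is precisely the point that the paper singles out as making Theorem \ref{t:inner} different from Theorems \ref{t:grafici} and \ref{t:outer}.

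Your telescoping $\bT_{F^+} = \bT_F^+ + Q(\etab\circ F)_\sharp\a{\cM_-\cup\cM_0}$ does not produce ``two global applications of the classical formula.'' The current $Q(\etab\circ F)_\sharp\a{\cM_-\cup\cM_0}$ is a (single-valued) normal graph over the \emph{proper closed} subset $\cM_-\cup\cM_0$, not over $\cM$, so \cite[Theorem 4.3]{DLS_Currents} cannot be invoked for it as stated. Even attempting to fix this by a further decomposition $(\etab\circ F)_\sharp\a{\cM_-\cup\cM_0} = (\etab\circ F)_\sharp\a{\cM} - (\etab\circ F)_\sharp\a{\cM_+}$ only transfers the trouble to a push-forward over $\cM_+$, which is open but on which $Y$ is not compactly supported; after telescoping all three pieces one is left with a term of the form $-2Q\,\delta\big((\etab\circ F)_\sharp\a{\cM_-}\big)(X)$ that is never global. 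The deeper reason your ``pointwise $\Ha^m$-a.e.\ cancellation'' argument fails is that the $\mathrm{Err}_1$ term in the statement of \cite[Theorem 4.3]{DLS_Currents} is obtained from a pointwise-valid intermediate integrand (the $J_2$ term involving $A(e_j,\nabla_{e_j}Y)$ and $A(e_j,Y)$) by an integration by parts over the \emph{whole open} $\cM$ with $Y$ compactly supported. This integration by parts is a genuinely global manipulation: $\int_\cM J_2 = \int_\cM \mathrm{Err}_1$-integrand, but $\int_E J_2 \ne \int_E \mathrm{Err}_1$-integrand for proper Borel $E\subset\cM$ (there are boundary terms along $\partial E$). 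So the contributions ``cancel pointwise'' only at the $J_2$ level, not at the $\mathrm{Err}_1$ level; your argument would also pick up uncontrolled boundary contributions along the interfaces between $\cM_+$, $\cM_-$ and $\cM_0$.

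The correct fix (the one the paper uses) is to follow the proof of \cite[Theorem 4.3]{DLS_Currents} only up to \cite[Eq.\ (4.13)]{DLS_Currents}, i.e.\ \emph{before} the integration by parts. That intermediate expansion is a pointwise identity, hence it localizes to the Borel sets $\cM_+$, $\cM_-$, $\cM_0$ exactly as in Theorem \ref{t:taylor_area}. One then sums the three localized $J_2$ contributions; since $\etab\circ N^+ = \etab\circ N^- = \etab\circ N$ holds identically on all of $\cM$, they reassemble into a single integral $J_2$ over $\cM$. Only \emph{then} does one perform the integration by parts, once, globally on $\cM$ with $Y$ compactly supported, converting $J_2$ into $\mathrm{Err}_1$. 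Your proposal, as written, does not account for this ordering, which is the whole content of the paper's remark that the reduction for Theorem \ref{t:inner} ``is however different.''
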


The three theorems can all be proved appealing to the computations in \cite[Section 4]{DLS_Currents}. 
First of all, by a standard approximation procedure we can assume that the test vector fields are in fact smooth.
Next consider the case of Theorem \ref{t:outer}. Using the triple $F^+, F^-$ and $\etab\circ F$, and taking into account the fact that the currents $\bT_{F^+} \res \p^{-1}(\cM_+)$, $\bT_{F^-} \res \p^{-1}(\cM_-)$, and $\bT_{\etab \circ F} \res \p^{-1}(\cM_0)$ are supported on disjoint sets, we can compute
\begin{align}
\delta \bT_F (X) = \delta \bT_{F^+}\res \p^{-1} (\cM_+) (X) + \delta \bT_{F^-}\res \p^{-1} (\cM_-) (X) + Q\, \delta \bT_{\etab \circ F}\res \p^{-1} (\cM_0) (X)\, .\label{e:decompo_first_var}
\end{align}
We can then appeal to \cite[Theorem 4.2]{DLS_Currents} to get the corresponding Taylor expansions of the three pieces separately and use \eqref{e:comp_der_1}, \eqref{e:comp_der_2} and \eqref{e:comp_der_3} to conclude the desired formulas. The proof of Theorem \ref{t:grafici} is entirely analogous, using \cite[Theorem 4.1]{DLS_Currents}. In both cases there is only one thing to notice: although in the statements of \cite[Theorem 4.1 \& Theorem 4.2]{DLS_Currents} the domain is assumed to be an open set (and the map $\varphi$ in  \cite[Theorem 4.2]{DLS_Currents} is assumed to have compact support), it can be easily seen that the proof given in \cite{DLS_Currents} is not using any specific property of the domain of the map except for its Borel measurability (and the assumption on the support of the map $\varphi$ in \cite[Theorem 4.2]{DLS_Currents} is also redundant).

Reducing Theorem \ref{t:inner} to  the case of \cite[Theorem 4.3]{DLS_Currents} is however different, since in the final part of the proof one integration by parts is used to treat the linear error term and thus the assumption that the domain is open and that the vector field $Y$ has compact support is crucial. In this case we proceed instead as follows. First of all we decompose the first variation of $\bT_F$ as in \eqref{e:decompo_first_var} and we denote by $N^+, N^-$ and $\etab\circ N$ the triple corresponding to the $\Iqspec$-valued map ``normal part'' $N$. For each of the three summands in \eqref{e:decompo_first_var} we then follow the proof of \cite[Theorem 4.3]{DLS_Currents} till \cite[Eq. (4.13)]{DLS_Currents}. Taking $\delta \bT_{F^+}$ as an example, we get the expansion
\begin{equation}\label{e:expansion_inner_+}
\begin{split}
 &\delta \bT_{F^+} \res \p^{-1} (\cM_+) (X)\\
=\; & \int_{\cM_+} \Big( \frac{|DN^+|^2}{2} {\rm div}_{\cM}\, Y -
\sum_i  D N^+_i : ( DN^+_i\cdot D_{\cM} Y)\Big) + J^+_2 + {\rm Err}^+_2 + {\rm Err}^+_3\, ,
\end{split}
\end{equation}
where
\begin{align}
J^+_2 &= Q \int_{\cM_+} \sum_j \left(
\langle A (e_j, \nabla_{e_j} Y), \etab\circ N^+\rangle + \langle A( e_j, Y), D_{e_j} \etab \circ N^+\rangle\right)\\
{\rm Err}^+_2 & \leq \int_{\cM_+} |A|^2 \left(|DY| |N^+|^2  +|Y| |N^+|\, |DN^+|\right), \label{e:inner_resto_4}\\
{\rm Err}^+_3 &\leq \int_\cM \Big( |Y| |A| |DN^+|^2 \big(|N^+| + |DN^+|\big) + |DY| \big(|A|\,|N^+|^2 |DN| + |DN^+|^4\big)\Big)\, .
\end{align}
We next sum to \eqref{e:expansion_inner_+} the corresponding expansions for the other two summands in the decomposition of $\delta \bT_F (X)$ (namely $\delta \bT_{F^-}
\res \p^{-1} (\cM_-) (X)$ and $Q\, \delta \bT_{\etab \circ F}\res \p^{-1} (\cM_0) (X)$). Using then \eqref{e:comp_der_1}, \eqref{e:comp_der_2} and \eqref{e:comp_der_3}, we easily reach
\begin{align}
\delta \bT_{F} (X) =\; & \int_{\cM} \Big( \frac{|DN|^2}{2} {\rm div}_{\cM}\, Y -
\sum_i  D N_i : ( DN_i\cdot D_{\cM} Y)\Big) + J_2 + {\rm Err}_2 + {\rm Err}_3\, ,
\end{align}
where ${\rm Err}_2$ and ${\rm Err}_3$ satisfy the estimates claimed in Theorem \ref{t:inner} and 
\begin{align}
J_2 = Q \int_\cM \sum_j \left(
\langle A (e_j, \nabla_{e_j} Y), \etab\circ N\rangle + \langle A( e_j, Y), D_{e_j} \etab \circ N\rangle\right)\, .
\end{align}
Note that at this stage the term $J_2$ corresponds to the term $J_2$ of \cite[Eq. (4.17)]{DLS_Currents}. Thus we can follow the remaining part of the proof of \cite[Theorem 4.3]{DLS_Currents} where an integration by parts transforms $J_2$ into the term ${\rm Err}_1$ of the expansion \eqref{e:inner}. 

\section{Reparametrization theorem on normal bundles}

In this section we state and prove the analogues of the results in \cite[Section 5]{DLS_Currents} in the context of $\Iqspec$-valued maps. 

\begin{theorem}[$\Iqspec$ parametrizations]\label{t:cambio}
Let $Q,m,n\in \N$ and $s<r<1$.
Then, there are constants $c_0, C>0$ (depending on $Q,m,n$ and $\frac{r}{s}$) with the following property.
Let $\phii$, $\cM$ and $\bU$ be as in Assumption~\ref{a:Iqspec-normal} with
$\Omega = B_{s}$ and let $f: B_{r} \to \Iqspec$ be such that
\begin{equation}\label{e:bounds_phi_f}
\|\phii\|_{C^2} + \Lip (f)\leq c_0 \qquad \mbox{and} \qquad \|\phii\|_{C^0} + \|f\|_{C^0} \leq c_0\, r .
\end{equation}
Set $\Phii (x) := (x, \phii (x))$.
Then, there are maps $F$ and $N$ as in Assumption~\ref{a:Iqspec-normal}(N) such that
$\bT_F = \mathbf{G}_f \res \bU$ and
\begin{gather}
\Lip (N) \leq C \big(\|D^2\phii\|_{C^0} \|N\|_{C^0} + \|D\phii\|_{C^0} +\Lip (f)\big)\, ,\label{e:stime_cambio2}\\
\frac{1}{2\sqrt{Q}} |N (\Phii (x))| \leq \cG_s (f (x), Q\a{\phii (x)})\leq 2 \sqrt{Q} \, |N (\Phii (x))|\qquad 
\forall x\,\in B_s\label{e:stime_cambio1}\, , \\
|\etab \circ N (\Phii (x))| \leq C |\etab\circ f (x) - \phii (x)| + C \Lip (f) |D\phii (x)| |N(\Phii (x))|\, 
\qquad \forall x\,\in B_s.\label{e:stima_media}
\end{gather} 
Finally, assume $x \in B_s$ and
$(x, \etab\circ f (x)) = \xi + {\rm v}$ for some $\xi\in \cM$ and ${\rm v}\perp T_\xi \cM$.
Then,
\begin{equation}\label{e:stima_cambio_10}
\cG_s (N(\xi), Q\a{{\rm v}}) \leq 2\sqrt{Q} \,\cG_s (f(x), Q\a{\etab\circ f(x)})\, . 
\end{equation}
\end{theorem}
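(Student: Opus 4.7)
The idea is to reduce everything to the classical $\A_Q$-valued reparametrization theorem from \cite[Section 5]{DLS_Currents} applied to the canonical triple $(f^+, f^-, \etab\circ f)$ associated to $f$ by Lemma \ref{l:admissible_triples}. Each of $f^+$ and $f^-$ is a $Q$-valued Lipschitz map on $B_r$ with Lipschitz constant at most $\Lip(f)$, and together with $\etab\circ f$ they satisfy the compatibility of Definition \ref{d:compatibility}. Applying the classical $\A_Q$-valued reparametrization to $f^\pm$ yields normal-bundle-valued maps $\tilde N^\pm \colon \cM \to \A_Q(\R^{m+n})$ such that, setting $\tilde F^\pm(\xi) := \sum_i \a{\xi + \tilde N^\pm_i(\xi)}$, one has $\bT_{\tilde F^\pm} = \bG_{f^\pm} \res \bU$; similarly, the single-valued reparametrization of $\etab\circ f$ produces a normal field $\nu \colon \cM \to \R^{m+n}$ parametrizing $\bG_{\etab\circ f} \res \bU$ as a graph over $\cM$.

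Next I would assemble the $\Iqspec$-valued normal map $N$. Setting
\[
\cM_\pm := \{ \xi \in \cM : {\rm sep}(\tilde N^\pm(\xi)) > 0\}, \qquad \cM_0 := \cM \setminus (\cM_+ \cup \cM_-),
\]
I would define $N(\xi) := (\tilde N^+(\xi), +1)$ on $\cM_+ \cup \cM_0$ and $N(\xi) := (\tilde N^-(\xi), -1)$ on $\cM_-$. For this recipe to yield a well-defined $\Iqspec$-valued map, the two classical reparametrizations must agree on $\cM_0$ and reduce there to $Q\a{\nu}$. This is a consequence of the compatibility of the triple: at any $\xi \in \cM_0$, the fiber $\p^{-1}(\xi)$ meets $\gr (f^+)$ in $Q$ coinciding points, corresponding to a single $x_0 \in B_r$ with $f^+(x_0) = Q\a{\etab\circ f(x_0)}$ (and the symmetric statement for $f^-$), forcing $\tilde N^+(\xi) = \tilde N^-(\xi) = Q\a{\nu(\xi)}$. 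The map $F$ is then defined from $N$ via Assumption \ref{a:Iqspec-normal}(N).

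With $(N,F)$ in hand, the identity $\bT_F = \bG_f \res \bU$ follows by unfolding both sides: the three summands $\bT_F^+, \bT_F^-, \bT_F^0$ in \eqref{eq:push-mod-p} match, piece by piece, the three contributions to $\bG_f$ in Definition \ref{d:graph}(ii), using the classical identifications together with the fact that, via $\p$, $\cM_\pm$ corresponds to $B_\pm$ and $\cM_0$ to $B_0$. The four estimates \eqref{e:stime_cambio2}--\eqref{e:stima_cambio_10} are obtained by casework on whether $x$ lies in $B_+$, $B_-$, or $B_0$: in each region, the corresponding classical bound from \cite[Section 5]{DLS_Currents} applied to $\tilde N^\pm$ or $\nu$ is in force, and the definition \eqref{eq:stronzo-metric} of $\G_s$ together with \eqref{e:mean_spec}--\eqref{e:collapsed_domain} packages these into the $\Iqspec$-valued inequalities.

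The main technical obstacle I anticipate is the compatibility check just described: verifying that the sets $\cM_\pm$ defined via the separations of $\tilde N^\pm$ coincide with the $\p$-images of $B_\pm$, and that on $\cM_0$ the two $\A_Q$-valued reparametrizations both reduce to $Q\a{\nu}$. This is ultimately a set-theoretic matter following from the compatibility of the triple $(f^+, f^-, \etab\circ f)$, but it must be tracked carefully through the bijection induced by the classical reparametrization. Once this is pinned down, everything else is a direct application of the classical theory composed with the isometry $\iso$.
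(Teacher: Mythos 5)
Your proposal matches the paper's proof in all essentials: both decompose $f$ into the canonical triple $(f^+, f^-, \etab\circ f)$, apply the classical $\A_Q$-valued reparametrization of \cite[Theorem 5.1]{DLS_Currents} to each piece, verify compatibility of the resulting triple $(G^+, G^-, g)$ via the geometric fiber-intersection algorithm of \cite[Lemma 5.4]{DLS_Currents} (this is precisely the content of Lemma \ref{l:redistribution_lemma} in the paper), and then reassemble into an $\Iqspec$-valued map via the operator $\mathbf{j}$. The technical obstacle you flag is exactly the one the paper addresses, and one should just be careful to check \emph{both} compatibility conditions -- not only that $\tilde N^\pm$ collapse to $Q\a{\nu}$ on $\cM_0$ (condition (b)), but also the disjointness $\cM_+\cap\cM_-=\emptyset$ (condition (a)) -- which indeed follows from the identification of $\cM_\pm$ with the $\p$-preimages of $B_\pm$ that you propose.
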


For further reference, we state
the following immediate corollary of Theorem~\ref{t:cambio}, corresponding to the case
of a linear $\phii$. In the statement we shall adopt the following notation: if $\pi$ is an $m$-dimensional linear subspace (briefly, an $m$-plane) in $\R^{m+n}$, $x \in \R^{m+n}$, and $r>0$, then we set $B_r(x,\pi) := \mathbf{B}_r(x) \cap (x+\pi)$, where $\mathbf{B}_r(x)$ is the open ball centered at $x$ with radius $r$ in $\R^{m+n}$, and we will only write $B_r(\pi)$ if $x$ is the origin. Furthermore, we shall use the symbol $\mathscr{A}_Q(\pi)$ to denote the space of special $Q$-points in the plane $\pi$. 

\begin{proposition}[$Q$-valued graphical reparametrization]\label{p:cambio_lineare}
Let $Q, m, n\in \mathbb N$ and $s<r<1$. There exist positive constants $c,C$ (depending only on
$Q,m,n$ and $\frac{r}{s}$) with the following
property. Let $\pi_0$ and $\pi$ be $m$-planes
with $|\pi-\pi_0|\leq c$ and $f:B_{r} (\pi_0)\to \mathscr{A}_Q(\pi_0^\perp)$ with 
$\Lip (f) \leq c$ and $|f|\leq c r$. Then, there is a Lipschitz
map $g: B_s (\pi)\to \mathscr{A}_Q(\pi^\perp)$ with $\mathbf{G}_g = \mathbf{G}_f \res \bC_s (\pi)$ and
such that the following estimates hold on $B_s (\pi)$:
\begin{gather}
\|g\|_{C^0} \leq C r |\pi-\pi_0| + C\|f\|_{C^0},\\
\Lip (g)\leq C |\pi-\pi_0| + C \Lip (f)\, .
\end{gather}
\end{proposition}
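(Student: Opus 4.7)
The plan is to deduce Proposition \ref{p:cambio_lineare} as a direct application of Theorem \ref{t:cambio} with $\bphi$ chosen to be linear. After applying an orthogonal change of coordinates, I would identify $\pi_0$ with $\R^m \times \{0\}$ and $\pi_0^\perp$ with $\{0\}\times \R^n$, and represent $\pi$ as the graph of a unique linear map $L \colon \R^m \to \R^n$. Since $|\pi - \pi_0|$ is small, a standard estimate yields $\|L\| \leq C|\pi - \pi_0|$; setting $\bphi(x) := L\cdot x$ and $\Phii(x) = (x,\bphi(x))$, the graph $\cM = \Phii(B_{r'})$ (for any $r' \leq r$) is a flat subset of $\pi$.

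Next I would pick an intermediate radius $r'$ with $s < r' < r$ large enough that the inclusion $B_s(\pi) \subset \Phii(B_{r'})$ holds as subsets of $\pi$. Since $\Phii$ differs from the identity (under the natural identifications) only by the small correction $L$, and since $s<r$ strictly, this is automatic provided $c$ is chosen small enough depending on $r/s$. Viewing $f$ as an $\Iqspec$-valued map via the isometric embedding $i_{+1}: \mathscr{A}_Q(\R^n) \hookrightarrow \Iqspec$, the smallness hypotheses \eqref{e:bounds_phi_f} are clearly met, so I may apply Theorem \ref{t:cambio} to obtain $F,N$ with $\bT_F = \bG_f \res \bU$. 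Because $\cM \subset \pi$ is flat, $T_x^\perp\cM = \pi^\perp$ for every $x\in \cM$, and the values of $N$ actually lie in $\mathscr{A}_Q(\pi^\perp) \subset \Iqspec$ (with constant sign $+1$, inherited from $f$).

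Then I would define $g$ on $B_s(\pi)$ by $g(y) := N(y)$, where $y$ is regarded as a point of $\cM$ via the inclusion $B_s(\pi) \subset \cM$. The equality $\bG_g = \bG_f \res \bC_s(\pi)$ is obtained by restricting $\bT_F = \bG_f \res \bU$ to $\bC_s(\pi) \subset \bU$. The Lipschitz estimate follows directly from \eqref{e:stime_cambio2}: since $\bphi$ is linear, $\|D^2 \bphi\|_{C^0} = 0$ and $\|D\bphi\|_{C^0} = \|L\| \leq C|\pi - \pi_0|$, so
\[
\Lip(g) \leq \Lip(N) \leq C\left(|\pi - \pi_0| + \Lip(f)\right).
\]
For the $C^0$ estimate, \eqref{e:stime_cambio1} combined with the triangle inequality gives
\[
|N(\Phii(x))| \leq 2\sqrt{Q}\, \cG_s(f(x),Q\a{\bphi(x)}) \leq C\left(\|f\|_{C^0} + |\bphi(x)|\right),
\]
and the trivial bound $|\bphi(x)| \leq \|L\|\cdot |x| \leq C\, r\,|\pi-\pi_0|$ on $B_r$ yields $\|g\|_{C^0} \leq C(r|\pi - \pi_0| + \|f\|_{C^0})$.

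There is no genuine obstacle here: the only mildly technical point is the geometric choice of $r'$ guaranteeing $B_s(\pi) \subset \Phii(B_{r'})$, which is a routine consequence of the smallness of $|\pi - \pi_0|$ and of $s<r$. Everything else is a direct transcription of the estimates provided by Theorem \ref{t:cambio} in the special case of vanishing second derivative of $\bphi$.
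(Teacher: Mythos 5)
Your overall approach is exactly what the paper has in mind: the paper explicitly calls Proposition~\ref{p:cambio_lineare} an "immediate corollary of Theorem~\ref{t:cambio}, corresponding to the case of a linear $\bphi$", and your proof carries this out carefully, using the linearity of $\bphi$ to kill the $\|D^2\bphi\|_{C^0}$ term in \eqref{e:stime_cambio2} and the bound $|N(\Phii(x))| \leq 2\sqrt{Q}\,\cG_s(f(x),Q\a{\bphi(x)})$ from \eqref{e:stime_cambio1} for the $C^0$ estimate.

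One small slip worth flagging: you write that you view $f$ as $\Iqspec$-valued "via the isometric embedding $i_{+1}$" and that $N$ has "constant sign $+1$ inherited from $f$". But the notation $\mathscr{A}_Q(\pi_0^\perp)$ in the statement already denotes the space of \emph{special} $Q$-points in $\pi_0^\perp$ (the paper says so explicitly just before the proposition), so $f$ is already $\Iqspec$-valued and may well have both a positive and a negative region; there is no embedding to perform and no reason for $N$ to carry a constant sign. This is purely a misreading of the notation and does not affect the validity of the argument, since Theorem~\ref{t:cambio} is stated for general $\Iqspec$-valued $f$ and the estimates \eqref{e:stime_cambio1}--\eqref{e:stime_cambio2} you invoke apply verbatim. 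Also, the intermediate radius $r'$ you introduce is not really needed: with $\bphi$ linear, every $y\in B_s(\pi)$ satisfies $|\mathbf{p}_{\pi_0}(y)|\leq |y|\leq s$, so the inclusion $B_s(\pi)\subset \Phii(B_s)$ is automatic.
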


Again Theorem \ref{t:cambio} will be reduced to the corresponding \cite[Theorem 5.1]{DLS_Currents}. 
First of all we introduce the triple $(f^+, f^-, \etab\circ f)$ and, setting $U := B_r$, consider $U_+, U_-$ and $U_0$. For each of the maps $f^+, f^-, \etab\circ f$ we apply \cite[Theorem 5.1]{DLS_Currents} and find the corresponding ``parametrizations'', which we denote $G^+, G^-, g$, so that 
\begin{align}
\bT_{G^+} &= \mathbf{G}_{f^+}\res \bU\\
\bT_{G^-} &= \mathbf{G}_{f^-}\res \bU\\
\bT_{g}  &= \mathbf{G}_{\etab\circ f}\res \bU\, .
\end{align}
We now wish to show two things, which we summarize in the following

\begin{lemma}\label{l:redistribution_lemma}
The triple $(G^+, G^-, g)$ satisfies the compatibility conditions of Definition \ref{d:compatibility}, and 
the map $F= {\bf j} (G^+, G^-, g)$ satisfies $\bT_F = \mathbf{G}_f \res \bU$. In fact the following stronger conclusion holds:
\begin{align}
\bT_{G^+} \res \p^{-1} (\cM_+) &= \bT_{F^+} \res \p^{-1} (\cM_+) = \mathbf{G}_{f^+} \res \bU \cap (U_+\times \mathbb R^n)\label{e:pos=pos}\\
\bT_{G^-} \res \p^{-1} (\cM_-) &= \bT_{F^-} \res \p^{-1} (\cM_-) = \mathbf{G}_{f^-}\res \bU \cap (U_- \times \mathbb R^n)\label{e:neg=neg}\\
\bT_{g} \res \p^{-1} (\cM_0)  &= \bT_{\etab\circ F} \res \p^{-1} (\cM_0)  =
\mathbf{G}_{\etab\circ f} \res \bU \cap (U_0\times \mathbb R^n).\label{e:0=0} 
\end{align} 
\end{lemma}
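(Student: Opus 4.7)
The three parametrizations $G^+, G^-, g$ are obtained by applying Theorem~5.1 of \cite{DLS_Currents} separately to $f^+$, $f^-$, and $\etab\circ f$; this already yields $\bT_{G^\pm} = \mathbf{G}_{f^\pm}\res\bU$ and $\bT_g = \mathbf{G}_{\etab\circ f}\res\bU$. The remaining content is to verify compatibility of the triple $(G^+, G^-, g)$, to identify the canonical triple of $F := {\bf j}(G^+, G^-, g)$ with $(G^+, G^-, g)$ itself, and to deduce the refined current identities \eqref{e:pos=pos}--\eqref{e:0=0}.

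The key structural fact is the pointwise collapse $f^+|_{U_- \cup U_0} \equiv Q\a{\etab\circ f}$ (and symmetrically $f^-|_{U_+ \cup U_0} \equiv Q\a{\etab\circ f}$), which gives the set-theoretic equality $\spt(\mathbf{G}_{f^+}) \cap ((U_- \cup U_0)\times\R^n) = \spt(\mathbf{G}_{\etab\circ f}) \cap ((U_- \cup U_0)\times\R^n)$, with the current $\mathbf{G}_{f^+}$ carrying multiplicity exactly $Q$ on this ``collapsed piece''. Any fiber $\p^{-1}(\Phii(\xi))$ then meets this piece in either $0$ points or in a single point carrying the full mass $Q$. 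Since the total slice-mass of $\mathbf{G}_{f^+}$ over $\p^{-1}(\Phii(\xi))$ equals $Q$, one obtains the following dichotomy: either the fiber intersects only the collapsed piece, forcing $G^+(\Phii(\xi)) = Q\a{g(\Phii(\xi))}$ (so $\mathrm{sep} = 0$), or it intersects only the $Q$-sheeted non-collapsed graph over $U_+$, giving $\mathrm{sep}(G^+(\Phii(\xi))) > 0$ with all $Q$ base-coordinates lying in $U_+$. The symmetric statement for $G^-$, together with the smallness~\eqref{e:bounds_phi_f} (which clusters all base-coordinates of a single fiber within an $O(c_0)$-neighborhood and hence into a single one of the opens $U_+, U_-, U_0$), yields compatibility~(a); condition~(b) is then immediate from the explicit form $G^\pm(\Phii(\xi)) = Q\a{g(\Phii(\xi))}$ on the complement of $\cM_\pm := \{\mathrm{sep}(G^\pm) > 0\}$.

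Once compatibility holds, $F := {\bf j}(G^+, G^-, g)$ is well-defined and Lipschitz. A direct computation using the formulas right after Definition~\ref{d:compatibility}, performed separately on $\cM_+$, $\cM_-$, and their complement $\cM_0$, shows that the canonical triple of $F$ coincides with $(G^+, G^-, g)$; in particular $(\cM_+, \cM_-, \cM_0)$ is exactly the canonical decomposition of $\cM$ induced by $F$. Identity~\eqref{e:pos=pos} now follows from the fiber dichotomy: $\p^{-1}(\cM_+) \cap \spt(\mathbf{G}_{f^+})$ coincides, modulo $\|\mathbf{G}_{f^+}\|$-null sets, with the non-collapsed portion of $\spt(\mathbf{G}_{f^+})$ lying above $U_+$, so restricting $\bT_{G^+} = \mathbf{G}_{f^+}\res\bU$ accordingly produces $\mathbf{G}_{f^+}\res\bU\cap(U_+\times\R^n)$. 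The identities \eqref{e:neg=neg} and \eqref{e:0=0} follow by identical reasoning, and their sum, together with Definition~\ref{d:push-mod-p}, gives $\bT_F = \mathbf{G}_f \res \bU$.

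The principal difficulty is making the fiber dichotomy rigorous near the interfaces between $U_+$, $U_-$, and $U_0$. This rests on two inputs: the integer-current mass-conservation argument (the collapsed piece of $\mathbf{G}_{f^\pm}$ carries multiplicity exactly $Q$, so a fiber contributes either no mass or the entire $Q$ to this component), and the smallness~\eqref{e:bounds_phi_f}, which forces $\p$ to be a uniform $C^1$-perturbation of the first-coordinate projection and thereby controls the base-coordinate extent of each fiber by $O(c_0)$. Together these make the dichotomy cleanly true rather than only generic.
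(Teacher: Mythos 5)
Your overall strategy — reducing to the geometric fiber algorithm of \cite[Lemma 5.4]{DLS_Currents} and using mass conservation across the slice to get a dichotomy for each of $G^+,G^-$ — is the same one the paper uses, and the derivation of \eqref{e:pos=pos}--\eqref{e:0=0} from the dichotomy plus Definition~\ref{d:push-mod-p} is fine. However, your proof of the compatibility condition~(a) has a genuine gap. You deduce that if ${\rm sep}(G^+(p))>0$ then all base-coordinates of the fiber over $p$ lie in $U_+$, and symmetrically for $G^-$, and you then try to rule out the case where both separations are positive by invoking the smallness in \eqref{e:bounds_phi_f} to conclude that ``all base-coordinates of a single fiber lie in a single one of the opens $U_+,U_-,U_0$.'' That claim is false: $U_+$ and $U_-$ are disjoint open sets with no lower bound on their mutual distance, and the closed set $U_0$ may have empty interior, so a base-coordinate cluster of radius $O(c_0)$ can straddle all three. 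Moreover, the base-coordinates of $(p+\varkappa)\cap\gr(f^+)$ and of $(p+\varkappa)\cap\gr(f^-)$ are in general \emph{different} finite sets, so even a perfect clustering estimate would not tie the two separate dichotomies together.

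The correct way to close the argument — and the one the paper uses — is to pivot through the single-valued map $g$. By \cite[Lemma 5.4]{DLS_Currents} applied to the single-valued $\etab\circ f$, the fiber $p+\varkappa$ meets $\gr(\etab\circ f)$ at the \emph{unique} point $q=g(p)$, and $x:=\p_{\pi_0}(q)$ belongs to exactly one of $U_+$, $U_-$, $U_0$. If $x\in U_+$, then $f^-(x)=Q\a{\etab\circ f(x)}$, so $q$ lies in $\gr(f^-)$ with full multiplicity $Q$, forcing $G^-(p)=Q\a{q}$ and ${\rm sep}(G^-(p))=0$; the cases $x\in U_-$ and $x\in U_0$ are handled symmetrically. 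This single trichotomy for the point $x$ is what simultaneously controls both $G^+$ and $G^-$, with no need for any quantitative smallness. It also gives directly that $p\in\cM_\pm$ (respectively $\cM_0$) iff $x\in U_\pm$ (respectively $U_0$), which is exactly what is needed to promote your set-theoretic identification of the graph pieces to the current identities \eqref{e:pos=pos}--\eqref{e:0=0}. Once you replace the smallness clustering step by this pivot argument, the remainder of your proof goes through.
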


Before coming to the proof of the lemma, we observe that, by virtue of \cite[Theorem 5.1 \& Lemma 5.4]{DLS_Currents} it implies Theorem \ref{t:cambio} and the following ``geometric algorithm'' to find the values of $F$; see also \cite{SS17a}.

\begin{lemma}[Geometric reparametrization]\label{l:algoritmo_naturale}
The values of $F$ in Theorem~\ref{t:cambio} can be determined at any point $p\in\cM$ as follows.
Let $\varkappa$ be the orthogonal complement of $T_p \cM$. Then $p+\varkappa$ intersects $\gr (\etab\circ f)$ at a unique point $q$ and if $x := \p_{\pi_0} (q)$, then
\begin{itemize}
\item[(i)] $p \in \cM_0$ if and only if $x\in U_0$;
\item[(ii)] $p\in \cM_+$ if and only if $x\in U_+$;
\item[(iii)] $p\in \cM_-$ if and only if $x\in U_-$.
\end{itemize} 
Furthermore:
\begin{itemize}
\item[(iv)] If $p\in \cM_0$, then $F (p) = Q \a{(x, \etab\circ f (x))} = Q \, \a{q}$;
\item[(v)] If $p\in \cM_+$, then $\spt (F(p)) = \gr (f^+)\cap (p+\varkappa)$ and the multiplicity of every
point $q$ in the value $F(x)$ equals the multiplicity of the point $\p_{\pi_0}^\perp (q)$ in $f^+ (\p_{\pi_0} (q))$;
\item[(vi)] If $p\in \cM_-$, then $\spt (F(p)) = \gr (f^-)\cap (p+\varkappa)$ and the multiplicity of every
point $q$ in the value $F(x)$ equals the multiplicity of the point $\p_{\pi_0}^\perp (q)$ in $f^- (\p_{\pi_0} (q))$.
\end{itemize}
\end{lemma}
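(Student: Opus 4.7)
The plan is to reduce Lemma \ref{l:algoritmo_naturale} to the analogous geometric description for classical reparametrizations given by \cite[Lemma 5.4]{DLS_Currents}, applied separately to each piece of the canonical decomposition, and then to thread the compatibility conditions through. Recall from the construction that $G^+$, $G^-$ and $g$ are the classical normal-bundle reparametrizations produced by applying \cite[Theorem 5.1]{DLS_Currents} to the Lipschitz maps $f^+$, $f^-$ and $\etab\circ f$ respectively, and that by Lemma \ref{l:redistribution_lemma} we have $F^+ = G^+$, $F^- = G^-$ and $\etab\circ F = g$; in particular the canonical decomposition $\{\cM_+, \cM_-, \cM_0\}$ of $\cM$ induced by $F$ is given by $\cM_\pm = \{\mathrm{sep}(G^\pm) > 0\}$ and $\cM_0 = \cM \setminus (\cM_+ \cup \cM_-)$.

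Applying \cite[Lemma 5.4]{DLS_Currents} to the single-valued map $\etab \circ f$, the smallness assumption \eqref{e:bounds_phi_f} guarantees that $p + \varkappa$ meets $\gr(\etab\circ f)$ at exactly one point $q$ and that $g(p)$ corresponds to $q - p$ under the natural identification of the normal fibre with $\varkappa$; moreover the assignment $p \mapsto x := \p_{\pi_0}(q)$ is a bi-Lipschitz homeomorphism between open sets of $\cM$ and of $U$. This already produces the uniqueness of $q$ in the statement and fixes the correspondence $p \leftrightarrow x$ used below to compare the two canonical decompositions. Applying \cite[Lemma 5.4]{DLS_Currents} next to $f^+$ and $f^-$ yields the geometric identities $\mathrm{spt}(G^+(p)) = \gr(f^+) \cap (p+\varkappa)$ and $\mathrm{spt}(G^-(p)) = \gr(f^-) \cap (p+\varkappa)$, each with multiplicities inherited from the respective $Q$-valued map, which are precisely the supports and multiplicities demanded by (v) and (vi).

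It then remains to align the two decompositions and to identify $F(p)$ in each region, which is where the compatibility condition of Definition \ref{d:compatibility} enters. If $x \in U_0$, compatibility forces $f^+(x) = f^-(x) = Q\a{\etab\circ f(x)}$, so locally both $\gr(f^\pm)$ coincide with the $Q$-fold cover of $\gr(\etab\circ f)$; the previous step gives $G^\pm(p) = Q\a{q}$ and hence $p \in \cM_0$ with $F(p) = Q\a{q}$, establishing (i) and (iv). If $x \in U_+$, compatibility gives $\mathrm{sep}(f^+(x)) > 0$ and $f^-(x) = Q\a{\etab\circ f(x)}$; hence $G^-(p) = Q\a{q}$, while $\mathrm{sep}(G^+(p)) > 0$ so that $p \in \cM_+$ and $F(p) = F^+(p) = G^+(p)$ is described by (v). The case $x \in U_-$ is symmetric, and since $p \leftrightarrow x$ is a bijection the three cases exhaust $\cM$ and also yield the reverse implications in (i)--(iii). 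The main technical point I expect to be the genuine obstacle is the implication $\mathrm{sep}(f^+(x)) > 0 \Rightarrow \mathrm{sep}(G^+(p)) > 0$: one must verify that the $Q$ sheets of $\gr(f^+)$, which are distinct at $x$, project through $\p$ to $Q$ distinct sheets of $\gr(G^+)$ near $p$. This is ensured by the smallness bounds in \eqref{e:bounds_phi_f}, which make each sheet uniformly transverse to the normal fibres and turn $\p$ into a local diffeomorphism on each sheet, but it is the only place where the geometry of $\cM$ interacts nontrivially with the multi-valued structure. The remainder is routine bookkeeping distributed across the three pieces of the canonical decomposition.
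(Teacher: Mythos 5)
Your overall structure matches the paper's: reduce to \cite[Lemma 5.4]{DLS_Currents} applied to each of $f^+$, $f^-$, $\etab\circ f$, use Lemma~\ref{l:redistribution_lemma} to identify $F^\pm=G^\pm$ and $\etab\circ F = g$, and thread the compatibility conditions through the three cases for $x$. Your treatment of $x\in U_0$ and of the direction $x\in U_+\Rightarrow {\rm sep}(G^-(p))=0$ (by counting: $q$ lies on $\gr(f^-)$ with multiplicity $Q$, so it exhausts $\gr(f^-)\cap(p+\varkappa)$) is exactly what the paper does, and (v)/(vi) do come directly from the support-and-multiplicity statement of \cite[Lemma 5.4]{DLS_Currents}.

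The one place where you diverge, and where I'd flag a genuine gap, is the implication you correctly single out as the crux: $x\in U_+ \Rightarrow {\rm sep}(G^+(p))>0$. You propose to establish it directly via transversality, arguing that the $Q$ sheets of $\gr(f^+)$ distinct at $x$ remain distinct when hit by the fibre $p+\varkappa$. This is more delicate than you suggest: ${\rm sep}(f^+(x))>0$ does \emph{not} mean that all $Q$ points of $f^+(x)$ are distinct, only that they do not all coincide, so one cannot simply select $Q$ single-valued Lipschitz sheets near $x$; one must cluster, and the clusters can be multi-valued and branch. You would then have to argue that the (tilted) fibre separates at least two clusters, which is true but requires care and quantitative control. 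The paper sidesteps this entirely by arguing the \emph{contrapositive} using only the multiplicity bookkeeping already contained in \cite[Lemma 5.4]{DLS_Currents}: if ${\rm sep}(G^+(p))=0$, then the fibre $p+\varkappa$ meets $\gr(f^+)$ in a single point $q'$ of multiplicity $Q$; by the multiplicity correspondence this forces $f^+(\p_{\pi_0}(q'))=Q\a{\p_{\pi_0}^\perp(q')}$, so $q'\in\gr(\etab\circ f)$, hence by uniqueness $q'=q=g(p)$ and $f^+(x)=Q\a{\etab\circ f(x)}$, i.e.\ $x\notin U_+$. This contrapositive is the first step of the paper's proof of Lemma~\ref{l:redistribution_lemma} and requires no transversality; I'd recommend replacing your transversality appeal with it, or at the very least acknowledging that the ``sheets'' may themselves be multi-valued clusters so the transversality argument needs the extra layer I described.
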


\begin{proof}[Proof of Lemma \ref{l:redistribution_lemma}] The lemma is an obvious consequence of the geometric algorithm given in \cite[Lemma 5.4]{DLS_Currents} to determine $G^+, G^-$ and $\etab\circ G$. Consider indeed a point $p\in \cM$ where the ${\rm sep} (G^+ (p)) =0$ and let $q = \etab \circ G^+ (p)$. If $\varkappa = (T_p \cM)^\perp$, \cite[Lemma 5.4]{DLS_Currents} implies immediately that 
$p+\varkappa$ intersects ${\rm Gr} (f^+)$ only in the point $q$ and that, having set $x:= \p_{\pi_0} (q)$ and $v := \p_{\pi_0}^\perp (q)$, $f^+ (x) = Q \a{v}$, so $v= \etab\circ f (x)$. This means that $p +\varkappa$ intersects the graph of $\etab\circ f$ in the point $q$, which in turn, again by \cite[Lemma 5.4]{DLS_Currents} must be precisely the value of $g$ at $p$. We have thus proved that, if ${\rm sep} (G^+ (p))=0$, then $G^+ (p) = Q \a{g (p)}$. The same argument also shows that, if ${\rm sep} (G^- (p))=0$, then $G^- (p) = Q \a{g (p)}$, thus proving condition (b) in Definition \ref{d:compatibility}. Next, we show the validity of condition (a), namely that $\min\{{\rm sep}(G^+(p)), {\rm sep}(G^-(p))\} = 0$ for every $p \in \cM$. Fix $p \in \cM$, and set again $\varkappa := (T_p\cM)^\perp$. By \cite[Lemma 5.4]{DLS_Currents}, $(p + \varkappa) \cap {\rm Gr}(\etab \circ f) = \{q\}$. If we set $x := \p_{\pi_0}(q)$, then $x \in U_+$ or $x \in U_-$ or $x \in U_0$. If $x \in U_+$, then there is $v \in \R^n$ such that $f^-(x) = Q \a{v}$, so that ${\rm sep}(f^-(x)) = 0$ and $\etab \circ f(x) = v$. Thus, $(p + \varkappa) \cap {\rm Gr}(f^-) = \{q\}$, $G^-(p) = Q \, \a{q}$, and ${\rm sep}(G^-(p)) = 0$. Analogously, one proves that if $x \in U_-$ then ${\rm sep}(G^+(p)) = 0$, and that if $x \in U_0$ then ${\rm sep}(G^+(p)) = 0 = {\rm sep}(G^-(p))$. Since $\{U_+,U_-,U_0\}$ is a partition of $U$, at each point $p \in \cM$ we necessarily have that either ${\rm sep}(G^+(p)) = 0$ or ${\rm sep}(G^-(p))= 0$, as we wanted. 

Note that, not only the argument above implies the $(G^+, G^-,g)$ satisfies the compatibility conditions of Definition \ref{d:compatibility} and hence they allow to get a well defined $F$, but they also imply immediately the conclusions (i), (ii) and (iii) of Lemma \ref{l:algoritmo_naturale}. Knowing the latter, the conclusions (iv), (v) and (vi) of Lemma \ref{l:algoritmo_naturale} are again an obvious corollary of \cite{DLS_Currents}. In turn they easily imply \eqref{e:pos=pos}, \eqref{e:neg=neg} and \eqref{e:0=0}. Finally, these three identities easily imply $\bT_F = \mathbf{G}_f \res \bU$.   
\end{proof}

\section{$L^1$ estimate on the separation over tilting planes}

We conclude with the analogue of \cite[Lemma 5.6]{DLS_Center}.

\begin{lemma}\label{l:cambio_tre_piani}
Fix $m,n,l$ and $Q$. There are geometric constants $c_0, C_0$ with the following property.
Consider two triples of planes $(\pi, \varkappa, \varpi)$ and $(\bar\pi, \bar\varkappa, \bar\varpi)$, where
\begin{itemize}
\item $\pi$ and $\bar\pi$ are $m$-dimensional;
\item $\varkappa$ and $\bar\varkappa$ are
$\bar{n}$-dimensional and orthogonal, respectively, 
to $\pi$ and $\bar\pi$;
\item $\varpi$ and $\bar\varpi$ are $l$-dimensional and orthogonal, respectively, to $\pi\times \varkappa$ and $\bar\pi\times
\bar\varkappa$.
\end{itemize}
Assume ${\rm An} := |\pi-\bar\pi| + |\varkappa-\bar\varkappa|\leq c_0$ and
let $\Psi: \pi\times \varkappa \to \varpi$,
$\bar\Psi: \bar\pi\times \bar\varkappa \to \bar\varpi$ be two maps
whose graphs coincide and such that $|\bar \Psi (0)| \leq c_0 r$ and $\|D\bar \Psi\|_{C^{0}} \leq c_0$.
Let $u: B_{8r} (0, \bar{\pi}) \to \mathscr{A}_Q (\bar\varkappa)$ be a map with
$\Lip (u) \leq c_0$ and $\|u\|_{C^0} \leq c_0 r$
and set $f (x):= \sum_i \llbracket(u_i (x), \bar\Psi (x, u_i (x)))\rrbracket$ and $\bef (x) := 
(\etab \circ u (x), \bar\Psi (x, \etab \circ u (x)))$. Then there are
\begin{itemize}
\item a map $\hat{u}: B_{4r} (0, \pi) \to \mathscr{A}_Q (\varkappa)$ such
that the map $\hat{f} (x) := \sum_i \a{(\hat{u}_i (x), \Psi (x, \hat{u}_i (x)))}$ satisfies $\mathbf{G}_{\hat{f}}
= \mathbf{G}_{f} \res \bC_{4r} (0, \pi)$
\item and a map $\hat\bef: B_{4r} (0, \pi) \to \varkappa \times \varpi$ defined by
$\mathbf{G}_{\hat\bef} = \mathbf{G}_{\bef} \res \bC_{4r} (0, \pi)$. 
\end{itemize} 
Finally, if $\beg (x) := (\etab \circ \hat{u} (x),
\Psi (x, \etab \circ \hat{u} (x)))$, then
\begin{align}
\|\hat\bef-\beg\|_{L^1} &\leq C_0 \left(\| f \|_{C^0}+ r {\rm An}\right) \big(\Dir (f) + r^m \big(\|D\bar \Psi\|^2_{C^0} + {\rm An}^2\big)\big)\, .\label{e:che_fatica}
\end{align}
\end{lemma}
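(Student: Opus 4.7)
The plan is to reduce \eqref{e:che_fatica} to the classical reparametrization lemma \cite[Lemma 5.6]{DLS_Center}, applied separately to the classical $Q$-valued components $u^+$ and $u^-$ of $u$, and then to combine the resulting estimates via the canonical decomposition. First, I would apply Proposition \ref{p:cambio_lineare} to reparametrize $f$ over the plane $\pi$, producing $\hat u$ and hence $\hat f$, and independently apply the single-valued graphical reparametrization to $\bef$ to obtain $\hat\bef$. The hypotheses of these results are immediate from the smallness of ${\rm An}$, $\|D\bar\Psi\|_{C^0}$, $\Lip(u)$, $\|u\|_{C^0}/r$, and $|\bar\Psi(0)|/r$.

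Next, write $U_+, U_-, U_0 \subset B_{8r}(0,\bar\pi)$ and $V_+, V_-, V_0 \subset B_{4r}(0,\pi)$ for the canonical decompositions induced by $u$ and $\hat u$ respectively. The crucial observation is that the classical $Q$-valued reparametrization $\widehat{u^\pm}$ of $u^\pm$ over $\pi$ coincides with the $\pm$-component $\hat u^\pm$ of the special reparametrization $\hat u$. This is read off from the geometric algorithm Lemma \ref{l:algoritmo_naturale} (equivalently, the redistribution argument of Lemma \ref{l:redistribution_lemma} in the present flat setting): the reparametrization is determined by the graph of the map as a set, and on $U_- \cup U_0$, where $u^+ = Q\a{\etab\circ u}$ has collapsed sheets, the graph of $u^+$ is $Q$ copies of the graph of $\etab\circ u$, so reparametrizing yields a collapsed $Q$-graph over the corresponding region $V_- \cup V_0$. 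Setting $\beg^\pm := (\etab\circ\widehat{u^\pm}, \Psi(\cdot, \etab\circ\widehat{u^\pm}))$, this identification gives $\beg = \beg^+$ on $V_+\cup V_0$ and $\beg = \beg^-$ on $V_-\cup V_0$.

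Applying \cite[Lemma 5.6]{DLS_Center} to the classical $Q$-valued maps $u^\pm$ yields
\[
\|\hat\bef - \beg^\pm\|_{L^1(B_{4r}(0,\pi))} \leq C_0\,(\|f^\pm\|_{C^0} + r\,{\rm An})\bigl(\Dir(f^\pm) + r^m(\|D\bar\Psi\|^2_{C^0} + {\rm An}^2)\bigr),
\]
where one uses that the reparametrization of the single-valued map $\bef_{u^\pm} := (\etab\circ u^\pm, \bar\Psi(\cdot, \etab\circ u^\pm))$ coincides with $\hat\bef$ because $\etab\circ u^\pm = \etab\circ u$. Using Proposition \ref{p:app_differentiability} and \eqref{eq:norms and averages1} one verifies the pointwise bounds $|f^\pm| \leq |f|_s$ and $|Df^\pm|^2 \leq |Df|^2$ (these are equalities on $U_\pm \cup U_0$, and on the complement $f^\pm = Q\a{\etab\circ f}$ has smaller norm and differential), hence $\|f^\pm\|_{C^0} \leq \|f\|_{C^0}$ and $\Dir(f^\pm)\leq\Dir(f)$. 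Decomposing
\[
\|\hat\bef - \beg\|_{L^1(B_{4r}(0,\pi))} \leq \int_{V_+\cup V_0}|\hat\bef - \beg^+| + \int_{V_-}|\hat\bef - \beg^-|
\]
and absorbing the resulting factor of $2$ into $C_0$ then produces \eqref{e:che_fatica}.

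The main obstacle is the identification $\widehat{u^\pm} = \hat u^\pm$. Intuitively this is clear --- the reparametrization depends only on the graph as a set in $\R^{m+n}$, and the canonical decomposition tracks where sheets coincide --- but making it rigorous requires comparing the construction of the special $Q$-valued reparametrization $\hat u$ with the classical construction applied to the $\pm$-components. This is essentially the content of Lemma \ref{l:redistribution_lemma} transposed to the present linear setting, together with the observation that collapsed $Q$-graphs remain collapsed under graphical reparametrization.
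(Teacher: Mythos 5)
Your proposal is correct and follows essentially the same route as the paper. The only difference is one of framing: the paper defines $\hat u$ directly by gluing the reparametrizations $v^\pm$ of the classical components $u^\pm$ on the regions $U_\pm \cup U_0$ and $U_-$ respectively, and invokes Lemmas~\ref{l:redistribution_lemma} and~\ref{l:algoritmo_naturale} to justify that this gluing reproduces $\mathbf{G}_f\res\bC_{4r}(0,\pi)$, whereas you reparametrize $f$ as an $\Iqspec$-valued map via Proposition~\ref{p:cambio_lineare} and then identify its $\pm$-components with the separate reparametrizations, citing the same lemmas. These are logically equivalent, and the step you flag as the main obstacle is precisely what the redistribution lemma provides; the paper's route sidesteps it slightly by building $\hat u$ from $v^\pm$ in the first place, then simply reading off $\etab\circ\hat u = \etab\circ v^\pm$ on the corresponding pieces. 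The remaining ingredients you use --- that $\etab\circ u^+=\etab\circ u^- =\etab\circ u$ forces a single reparametrized average $\hat\bef$, the bounds $\|f^\pm\|_{C^0}\le\|f\|_{C^0}$ and $\Dir(f^\pm)\le\Dir(f)$, and the piecewise comparison of $\beg$ with $\beg^\pm$ followed by \cite[Lemma~5.6]{DLS_Center} --- are exactly those of the paper.
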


\begin{proof}
We start introducing the maps $f^\pm$ and $u^\pm$. We then apply the reparametrization theorem to determine maps $g^\pm$ and $v^\pm$ which satisfy $\mathbf{G}_{g^\pm} = \mathbf{G}_{f^\pm} \res \bC_{4r} (0, \pi)$ and $\mathbf{G}_{v^\pm} =
\mathbf{G}_{u^\pm} \res \bC_{4r} (0, \pi)$. Recall that $\etab\circ f^+ = \etab\circ f^- = \etab\circ f$
and $\etab\circ u^+ = \etab\circ u^- = \etab\circ u$. By Lemma \ref{l:redistribution_lemma} and Lemma \ref{l:algoritmo_naturale}, we can next decompose $U = B_{4r}(0,\pi)$ into disjoint sets $U_+$, $U_-$ and $U_0$ by setting $U_\pm = \{x \in U \, \colon \, {\rm sep}(v^{\pm}(x)) \neq 0 \}$. Then, we define:
\begin{itemize}
\item[(a)] $\hat{u} (x) := v^+ (x)$ for $x\in U_+ \cup U_0$, so that $\hat{f} (x) = g^+ (x) = (v^+ (x), \Psi (x, v^+ (x)))$ for $x \in U_+ \cup U_0$;
\item[(b)] $\hat{u} (x) := v^- (x)$ for $x \in U_-$, so that $\hat{f} (x) = g^- (x) = (v^- (x), \Psi (x, v^- (x)))$ for $x\in U_-$;
\item[(c)] $\hat{f} (x) = f^- (x) = f^+ (x) = Q \a{\beg (x)} = Q \a{\hat\bef(x)}$  for $x\in U_0$.
\end{itemize}
Hence, if we introduce 
\begin{align}
\beg^+ &:= (\etab\circ v^+, \Psi (\cdot, \etab\circ v^+))\, ,\\
\beg^- &:= (\etab\circ v^-, \Psi (\cdot, \etab\circ v^-))\, , 
\end{align}
we easily conclude that
\begin{equation}\label{e:redistributing}
\|\hat\bef-\beg\|_{L^1 (B_{4r} (0, \pi))} =
\|\hat\bef-\beg^+\|_{L^1 (U_+)} + \|\hat\bef - \beg^-\|_{L^1 (U_-)}\, . 
\end{equation}
Now we apply \cite[Lemma 5.6]{DLS_Center} to each $f^\pm$ in order to infer
\begin{equation}\label{e:che_fatica_2}
\|\hat\bef-\beg^\pm\|_{L^1} \leq C_0 \left(\| f^\pm \|_{C^0}+ r {\rm An}\right) \big(\Dir (f^\pm) + r^m \big(\|D\bar \Psi\|^2_{C^0} + {\rm An}^2\big)\big)\, .
\end{equation}
Considering Lemma \ref{l:admissible_triples} we have $\|f^\pm\|_{C^0} \leq \|f\|_{C^0}$ and
$\Dir (f^\pm) \leq \Dir (f)$. Hence \eqref{e:che_fatica} is an obvious consequence of \eqref{e:redistributing} and \eqref{e:che_fatica_2}.
\end{proof}

\bibliographystyle{plain}
\bibliography{Biblio}

\end{document}